\documentclass[10pt]{article}
\usepackage{amssymb,enumerate}
\usepackage{amsmath,amsfonts}
\usepackage{amsthm}
\usepackage{latexsym}
\usepackage{mathrsfs}
\usepackage{color}
\usepackage{microtype}
\usepackage{geometry}

\usepackage{graphicx}

\geometry{hmargin=0.9in,vmargin=0.8in}
\usepackage{algorithm}
\usepackage{algorithmicx}
\usepackage{algpseudocode}

\usepackage{multirow}
\allowdisplaybreaks


\DeclareMathOperator{\cK}{\ensuremath{\mathcal{K}}}
\DeclareMathOperator{\cN}{\ensuremath{\mathcal{N}}}
\DeclareMathOperator{\cC}{\ensuremath{\mathcal{C}}}
\DeclareMathOperator{\cO}{\ensuremath{\mathcal{O}}}
\DeclareMathOperator{\cS}{\ensuremath{\mathcal{S}}}
\DeclareMathOperator{\cL}{\ensuremath{\mathcal{L}}}

\DeclareMathOperator{\cA}{\ensuremath{\mathcal{A}}}

\DeclareMathOperator{\bR}{\ensuremath{\mathbb{R}}}
\DeclareMathOperator{\bK}{\ensuremath{\mathbb{K}}}

\DeclareMathOperator{\low}{\ensuremath{\mathrm{low}}}

\DeclareMathOperator{\rmint}{\ensuremath{\mathrm{int}}}

\DeclareMathOperator{\sgn}{\ensuremath{\mathrm{sgn}}}

\DeclareMathOperator{\init}{\ensuremath{\mathrm{init}}}

\DeclareMathOperator{\hi}{\ensuremath{\mathrm{hi}}}

\newtheorem{lemma}{Lemma}[section]

\newtheorem{assumption}{Assumption}[section]
\newtheorem{definition}{Definition}[section]
\newtheorem{theorem}{Theorem}[section]
\newtheorem{remark}{Remark}[section]

\newcommand{\beq}{\begin{equation}}
\newcommand{\eeq}{\end{equation}}
\newcommand{\beqa}{\begin{eqnarray}}
\newcommand{\eeqa}{\end{eqnarray}}
\newcommand{\beqas}{\begin{eqnarray*}}
\newcommand{\eeqas}{\end{eqnarray*}}
\newcommand{\ba}{\begin{array}}
\newcommand{\ea}{\end{array}}
\newcommand{\bi}{\begin{statementize}}
\newcommand{\ei}{\end{statementize}}

\def\bdelta{{\delta}}
\def\bT{{\mathbb{T}}}
\def\cT{{\cal T}}

\def\fh{{f_{\rm hi}}}
\def\fl{{f_{\rm low}}}

\def\phih{{\phi_{\rm hi}}}
\def\phil{{\phi_{\rm low}}}
\def\hd{{\hat d}}
\def\nn{{\nonumber}}
\def\tc{{\tilde c}}
\def\td{{\tilde d}}
\def\tf{{\tilde f}}
\def\tdc{{\tilde \delta_c}}
\def\tdf{{\tilde \delta_f}}

\title{A Newton-CG based barrier-augmented Lagrangian method for general nonconvex conic optimization}

\author{
	Chuan He\thanks{Department of Mathematics, Link\"oping University, Sweden (email: {\tt chuan.he@liu.se}).} \\
	\and
	Heng Huang\thanks{Department of Computer Science, University of Maryland, USA  (email: \texttt{heng@umd.edu}). The work of this author was partially supported by NSF Award IIS-2211492.}\\
	\and
	Zhaosong Lu\thanks{Department of Industrial and Systems Engineering, University of Minnesota, USA (email: {\tt zhaosong@umn.edu}). The work of this author was partially supported by NSF Award IIS-2211491.}
}

\date{April 2, 2023 (Revised: March 4, 2024; August 8, 2024; August 25, 2024)}

\begin{document}
\maketitle
\begin{abstract}
In this paper we consider finding an approximate second-order stationary point (SOSP) of  general nonconvex conic optimization that minimizes a twice differentiable function 
subject to nonlinear equality constraints and also a convex conic constraint. 
In particular, we propose a Newton-conjugate gradient (Newton-CG) based barrier-augmented Lagrangian method for finding an approximate SOSP of this problem. 
Under some mild assumptions, we show that our method enjoys a total inner iteration complexity of $\widetilde{\cO}(\epsilon^{-11/2})$ and an operation complexity of $\widetilde{\cO}(\epsilon^{-11/2}\min\{n,\epsilon^{-5/4}\})$ for finding an $(\epsilon,\sqrt{\epsilon})$-SOSP of general nonconvex conic optimization with high probability. Moreover, under a constraint qualification, these complexity bounds are improved to $\widetilde{\cO}(\epsilon^{-7/2})$ and $\widetilde{\cO}(\epsilon^{-7/2}\min\{n,\epsilon^{-3/4}\})$, respectively. To the best of our knowledge, this is the first study on the complexity of finding an approximate SOSP of general nonconvex conic optimization. Preliminary numerical results are presented to demonstrate superiority of the proposed method over first-order methods in terms of solution quality.
\end{abstract}
\noindent{\small {\bf Keywords}: Nonconvex conic optimization, second-order stationary point, augmented Lagrangian method, barrier method, Newton-conjugate gradient method, iteration complexity, operation complexity}

\medskip

\noindent{\small {\bf Mathematics Subject Classification}: 49M05, 49M15, 68Q25, 90C26, 90C30, 90C60}
	
\section{Introduction} \label{intro}
 In this paper we consider the following general nonconvex conic optimization problem:
\begin{equation}\label{model:equa-cnstr}
\min_{x}\{f(x):c(x)=0,\ x\in\cK\},	
\end{equation}
where $\cK\subseteq\bR^{n}$ is a closed and pointed convex cone with a nonempty interior, and $f:\bR^n\to\bR$ and $c:\bR^n\to\bR^{m}$ are continuous in $\cK$ and twice continuously differentiable in the interior of $\cK$. Assume that problem~\eqref{model:equa-cnstr} has at least one optimal solution. 
Our goal is to propose an implementable method with complexity guarantees for finding an approximate second-order stationary point (SOSP) of \eqref{model:equa-cnstr} that will be introduced in Section \ref{sec:opt}.

In recent years, there has been considerable research on designing algorithms with complexity guarantees for finding an approximate SOSP of nonconvex optimization problems. In particular, numerous algorithms were developed for nonconvex unconstrained optimization, such as cubic regularized Newton methods \cite{AABHM17,CD19,CGT11ARC,NP06cubic}, trust-region methods \cite{CRRW21trust,CRS16trust,MR17trust},  quadratic regularization method  \cite{BM17QC}, accelerated gradient method \cite{CDHS17,CDHS18},  second-order line-search method \cite{RW18}, Newton-conjugate gradient (Newton-CG) method \cite{ROW20}, and gradient-based methods with random perturbations \cite{AL17,JNJ18AGD,XJY17AGD}. In addition, several methods with complexity guarantees have also been proposed for nonconvex optimization with relatively simple constraints. For example, interior-point method \cite{BCY2015}, log-barrier method \cite{OW21}, and projected gradient descent method \cite{XW21proj} were proposed for nonconvex optimization with sign constraints. Besides, the interior-point method \cite{BCY2015} was generalized in \cite{HLY19} for nonconvex optimization with sign constraints and additional linear equality constraints. Also, a projected gradient descent method with random perturbations was proposed in \cite{LRYHH20} for nonconvex optimization with linear inequality constraints. Iteration complexity of these methods has been established for finding an approximate SOSP. Besides, operation complexity in terms of the total number of fundamental operations has been studied for the methods \cite{AABHM17,AL17,CD19,CDHS17,CDHS18,CRRW21trust,JNJ18AGD,ROW20,RW18,XJY17AGD}.

Several methods, including trust-region methods \cite{BSS87,CLY02}, sequential quadratic programming method \cite{BL95}, two-phase method \cite{BGMST16,CGT19eq,CM19poly}, penalty method \cite{GEB22cpt}, and augmented Lagrangian (AL) methods \cite{AHRP17,BHR18,HL21al,S19iAL,XW21PAL}, were proposed for finding an approximate SOSP of equality constrained optimization:
\begin{equation}\label{eq-constr}
\min_{x}\{f(x):c(x)=0\},	
\end{equation} 
which is 
special case of \eqref{model:equa-cnstr} with $\cK=\bR^n$. Moreover, total inner iteration complexity and  operation complexity, which are respectively measured by the total number of iterations of the Newton-CG method in \cite{ROW20} and  the total number of gradient evaluations and matrix-vector products performed in  the method, were established in \cite{HL21al,XW21PAL}  for finding an $(\epsilon,\sqrt{\epsilon})$-SOSP $x$ of \eqref{eq-constr} which together with some $\lambda\in\bR^m$ satisfies 
\begin{equation*}
\begin{array}{l}
\|c(x)\|\le\epsilon,\ \|\nabla f(x) + \nabla c(x)\lambda\|\le\epsilon, \\ [8pt]
d^T(\nabla^2f(x)+\sum_{i=1}^m\lambda_i\nabla^2 c_i(x))d \ge - \sqrt{\epsilon} \|d\|^2, \quad  \forall  d \in \{d:\nabla c(x)^Td=0\},
\end{array}    
\end{equation*}
where $\nabla c$ denotes the transpose of the Jacobian of $c$. Specifically, under some suitable assumptions, including a generalized linear independence constraint qualification (GLICQ), the  AL method \cite{XW21PAL} enjoys a total inner iteration complexity of $\widetilde{\cO}(\epsilon^{-11/2})$  and an operation complexity $\widetilde{\cO}(\epsilon^{-11/2}\min\{n,\epsilon^{-3/4}\})$,\footnote{In fact, a total inner iteration complexity of $\widetilde{\cO}(\epsilon^{-7})$  and an operation complexity $\widetilde{\cO}(\epsilon^{-7}\min\{n,\epsilon^{-1}\})$ were established in \cite{XW21PAL} for finding an $(\epsilon,\epsilon)$-SOSP of problem \eqref{model:equa-cnstr} with high probability; see  \cite[Theorem~4(ii), Corollary~3(ii), Theorem~5]{XW21PAL}.  Nevertheless, they can be easily modified to obtain the aforementioned complexity for finding an $(\epsilon,\sqrt{\epsilon})$-SOSP of \eqref{model:equa-cnstr} with high probability.} while the AL method \cite{HL21al} achieves a total inner iteration complexity of $\widetilde{\cO}(\epsilon^{-7/2})$ and an operation complexity of $\widetilde{\cO}(\epsilon^{-7/2}\min\{n,\epsilon^{-3/4}\})$ for finding an $(\epsilon,\sqrt{\epsilon})$-SOSP of problem \eqref{eq-constr} with high probability. On the other hand, when the GLICQ does not hold, the AL method \cite{HL21al} has a total inner iteration complexity of $\widetilde{\cO}(\epsilon^{-11/2})$ and an operation complexity of $\widetilde{\cO}(\epsilon^{-11/2}\min\{n,\epsilon^{-5/4}\})$. Besides, it shall be mentioned that Newton-CG based AL methods were developed for efficiently solving a variety of convex optimization problems (e.g., see \cite{YST15sdpnal,ZST10NCGAL}), though their complexities remain unknown.



In addition, a Newton-CG based barrier method was recently proposed in  \cite{HL21bar} for finding an approximate SOSP of a class of nonconvex conic optimization of the form 
\begin{equation}\label{linear-cone}
\min_{x}\{f(x):Ax-b=0,\ x\in\cK\}
\end{equation}
for some $A\in\bR^{m\times n}$ and $b\in\bR^m$, which is a special case of \eqref{model:equa-cnstr}.  Iteration and operation complexity of this method were established in \cite{HL21bar} for finding an $(\epsilon,\sqrt{\epsilon})$-SOSP $x$ of \eqref{linear-cone} which together with some $\lambda\in\bR^m$ satisfies
\[
\begin{array}{l}
Ax=b,\ x\in\rmint\cK,\ \nabla f(x)+A^T\lambda\in\cK^*,\ \|\nabla f(x)+A^T\lambda\|_x^*\le\epsilon,\\[8pt]
d^T\nabla^2 B(x)^{-1/2}\nabla^2 f(x)\nabla^2 B(x)^{-1/2}d\ge -\sqrt{\epsilon}\|d\|^2,\quad \forall d\in\{d:A\nabla^2 B(x)^{-1/2}d=0\},
\end{array}    
\]
where $\rmint\cK$ and $\cK^*$ are respectively the interior and dual cone of $\cK$, $B$ is a logarithmically homogeneous self-concordant barrier function for $\cK$, and $\|\cdot\|_x^*$ is a local norm induced by $B$ at $x$ (see Section \ref{sec:not-pre} for details). Under some suitable assumptions,  this method achieves an iteration complexity of $\cO(\epsilon^{-3/2})$ and an operation complexity\footnote{The operation complexity of the barrier method \cite{HL21bar} is measured by the amount of fundamental operations consisting of matrix-vector products, matrix multiplications, Cholesky factorizations, and backward or forward substitutions to a triangular linear system.} of $\widetilde{\cO}(\epsilon^{-3/2}\min\{n,\epsilon^{-1/4}\})$ for finding an $(\epsilon,\sqrt{\epsilon})$-SOSP with high probability.  Besides, a Hessian barrier algorithm was proposed in \cite{DS21hba} for finding an approximate SOSP of problem \eqref{linear-cone}. Given that this algorithm requires solving a cubic regularized subproblem exactly per iteration, {it is generally expensive to implement.}



It shall also be mentioned that finding an approximate {\it first-order stationary point} of  \eqref{model:equa-cnstr} with $\cK=\bR^n_+$ was extensively studied in the literature (e.g., \cite{AT02,AO17BAL,AT19BAL,CGT13,CGT97BAL,DDAM08,GPSY99BAL,KB18BAL,
MP03BAL,VS99,WB06,WJJO06}). Notably, a hybrid approach by combining barrier and AL methods was commonly used in \cite{AT02,AO17BAL,AT19BAL,CGT97BAL,DDAM08,GPSY99BAL,KB18BAL,MP03BAL}). However,  
finding an approximate SOSP of \eqref{model:equa-cnstr} by such a hybrid approach has not been considered, even for \eqref{model:equa-cnstr} with $\cK=\bR^n_+$. Inspired by these and \cite{HL21bar,HL21al}, in this paper we propose a Newton-CG based barrier-AL method for finding an approximate SOSP of problem~\eqref{model:equa-cnstr} with high probability. Our main contributions are as follows.

\begin{itemize}
\item We study first- and second-order optimality conditions for problem \eqref{model:equa-cnstr} and introduce an approximate counterpart of them.

\item We propose an implementable Newton-CG based barrier-AL method for finding an approximate SOSP of \eqref{model:equa-cnstr}, whose main operations consist of Cholesky factorizations and other fundamental operations including matrix-vector products and backward or forward substitutions to a triangular linear system.\footnote{The arithmetic complexity of Cholesky factorizations for a positive definite matrix is $\cO(n^3)$ in general, while the arithmetic complexity of matrix-vector products and backward or forward substitutions is at most $\cO(n^2)$, where $n$ is the number of rows of the matrix. 
}

\item We show that under some mild assumptions, our proposed method has a total inner iteration complexity of $\widetilde{\cO}(\epsilon^{-11/2})$ and an operation complexity of $\widetilde{\cO}(\epsilon^{-11/2}\min\{n,\epsilon^{-5/4}\})$ for finding an $(\epsilon,\sqrt{\epsilon})$-SOSP of \eqref{model:equa-cnstr} with high probability. Furthermore, under a constraint qualification, we show that our method achieves an improved total inner iteration complexity of $\widetilde{\cO}(\epsilon^{-7/2})$ and an improved operation complexity of $\widetilde{\cO}(\epsilon^{-7/2}\min\{n,\epsilon^{-3/4}\})$.\footnote{It shall be mentioned that the total numbers of Cholesky factorizations are only $\widetilde{\cO}(\epsilon^{-7/2})$ and $\widetilde{\cO}(\epsilon^{-11/2})$ respectively for the case where constraint qualification holds or not. See Subsections~\ref{subsec:total-complexity} and \ref{sec:AL-modified} for details.} To the best of our knowledge, there was no complexity result for finding an approximate SOSP of problem \eqref{model:equa-cnstr} in the literature before.

\end{itemize}

The rest of this paper is organized as follows. In Section~\ref{sec:not-pre}, we introduce some notation. In Section~\ref{sec:opt}, we study optimality conditions of problem~\eqref{model:equa-cnstr} and introduce an inexact counterpart of them. In Section~\ref{sec:sbpb-solver}, we propose a preconditioned Newton-CG method for solving a barrier problem and study its complexity. We then propose a Newton-CG based barrier-AL method for \eqref{model:equa-cnstr} and study its complexity in Section~\ref{sec:AL-method}. We present in Section~\ref{sec:nr} some preliminary numerical results for the proposed method.  In Section \ref{sec:proof}, we present the proofs of the main results. Finally, we make some concluding remarks in Section~\ref{sec:cr}.

\section{Notation and preliminaries}\label{sec:not-pre}

Throughout this paper,  we let $\bR^n$ denote the $n$-dimensional Euclidean space. The symbol $\|\cdot\|$ stands for the Euclidean norm of a vector or the spectral norm of a matrix. 
The identity matrix is denoted by $I$. We denote by $\lambda_{\min}(H)$ the minimum eigenvalue of a real symmetric matrix $H$. For any two real symmetric matrices $M_1$ and $M_2$, $M_1 \preceq M_2$ means that $M_2-M_1$ is positive semidefinite. For any positive semidefinite matrix $M$, $M^{1/2}$ denotes a positive semidefinite matrix such that $M=M^{1/2}M^{1/2}$. For the closed convex cone $\cK$, its interior and dual cone are respectively denoted by $\rmint\cK$ and $\cK^*$. For any $x\in\cK$, the normal cone and tangent cone  of $\cK$ at $x$ are denoted by $\cN_{\cK}(x)$ and $\cT_{\cK}(x)$, respectively. The Euclidean ball centered at the origin with radius $R\ge0$ is denoted by $\mathcal{B}_R:=\{x:\|x\|\le R\}$, and  we use $\Pi_{\mathcal{B}_R}(v)$ to denote the Euclidean projection of a vector $v$ onto $\mathcal{B}_R$. For a given finite set $\cA$, we let $|\cA|$ denote its cardinality. For any $s\in \mathbb{R}$, we let ${\rm sgn}(s)$ be $1$ if $s \ge 0$ and let it be $-1$ otherwise. In addition, $\widetilde{\cO}(\cdot)$ represents $\cO(\cdot)$ with logarithmic terms omitted.
 
Logarithmically homogeneous self-concordant (LHSC) barrier function is a key ingredient in the development of interior-point methods for convex programming (see the monograph \cite{NN94}). It will also play a crucial role in the design and analysis of Newton-CG based barrier-AL method for solving problem \eqref{model:equa-cnstr}. \emph{Throughout this paper, we assume that the cone $\cK$ is equipped with a $\vartheta$-logarithmically homogeneous self-concordant ($\vartheta$-LHSC) barrier function $B$ for some $\vartheta\ge 1$.} That is, $B:\rmint\cK\to\bR$ satisfies the following conditions:
\begin{enumerate}[{\rm (i)}]
\item $B$ is convex and three times continuously differentiable in $\rmint \cK$, and moreover, $|\psi^{\prime\prime\prime}(0)|\le 2(\psi^{\prime\prime}(0))^{3/2}$ holds for all $x\in\rmint\cK$ and $u\in\bR^n$, where $\psi(t)=B(x+tu)$;
\item $B$ is a barrier function for $\cK$, that is, $B(x)$ goes to infinity as $x$ approaches the boundary of $\cK$;
\item $B$ is logarithmically homogeneous, that is, $B(tx)=B(x)-\vartheta\ln t$ holds for all $x\in\rmint\cK$ and $t>0$. 
\end{enumerate}
For any $x\in\rmint\cK$, the function $B$ induces the following local norms:
\begin{eqnarray}
\|v\|_x&:=& \left(v^T\nabla^2 B(x)v\right)^{1/2}, \quad \forall v\in\bR^{n}, \nonumber\\ [4pt]
\|v\|_x^*&:=& \left(v^T\nabla^2 B(x)^{-1}v\right)^{1/2}, \quad \forall v\in\bR^{n},\nonumber\\ [4pt]
\|M\|^*_x &:=& \max\limits_{\|v\|_x \le 1} \|Mv\|^*_x, \quad \forall M\in \bR^{n\times n}. \label{M-norm}
\end{eqnarray} 
In addition, $\nabla^2 B(x)^{-1}$ is well-defined only in $\rmint\cK$ but undefined on the boundary of $\cK$. To capture the behavior of $\nabla^2 B(x)^{-1}$ as $x$ approaches the boundary of $\cK$, the concept of {\it the limiting inverse of the Hessian of $B$} was recently introduced in \cite{HL21bar}, which can be viewed as a generalization of $[\nabla^2 B]^{-1}$. Specifically,  the limiting inverse of the Hessian of $B$ is defined as follows:
\begin{equation*}
\nabla^{-2} B(x) := \left\{M : M=\lim\limits_{k\to\infty}\nabla^2 B(x^k)^{-1}\text{ for some } \{x^k\}\subset\rmint\cK\text{ with }x^k\to x\text{ as }k\to\infty\right\},\quad \forall x\in\cK.
\end{equation*}
As established in \cite[Theorem~1]{HL21bar}, the inverse of $\nabla^2 B(x)$ is bounded in any nonempty bounded subset of $\rmint\cK$. Consequently, $\nabla^{-2} B(x)\neq \emptyset$ for all $x\in\cK$. Moreover, the following property holds for $\nabla^{-2} B$, whose proof can be found in \cite[Theorem~2]{HL21bar}.

\begin{lemma}\label{lem:ginvH}
For any $x\in\cK$, it holds that 
$\{x+M^{1/2}d:\|d\|<1\}\subseteq\cK$ for all $M\in\nabla^{-2} B(x)$.
\end{lemma}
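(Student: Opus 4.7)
The plan is to reduce the statement to the Dikin ellipsoid property enjoyed by every self-concordant barrier, and then extend the conclusion from the interior of $\cK$ to its boundary by a limiting argument that exploits the defining sequence of $\nabla^{-2}B(x)$ together with the closedness of $\cK$.

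First I would dispose of the case $x\in\rmint\cK$. Here the definition of $\nabla^{-2}B(x)$ collapses to the single matrix $\nabla^2 B(x)^{-1}$, so necessarily $M=\nabla^2 B(x)^{-1}$ and $M^{1/2}=\nabla^2 B(x)^{-1/2}$. By the Dikin ellipsoid theorem for self-concordant barriers (see \cite{NN94}), the open unit ball in the local norm, $\{y:\|y-x\|_x<1\}$, is contained in $\rmint\cK$. Substituting $y=x+M^{1/2}d$ yields $\|y-x\|_x^2=d^T M^{1/2}\nabla^2B(x)M^{1/2}d=\|d\|^2$, so the Dikin ellipsoid coincides with $\{x+M^{1/2}d:\|d\|<1\}$, which is therefore contained in $\rmint\cK\subseteq\cK$.

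For the general case $x\in\cK$ and $M\in\nabla^{-2}B(x)$, let $\{x^k\}\subseteq\rmint\cK$ be a sequence with $x^k\to x$ and $\nabla^2B(x^k)^{-1}\to M$, as guaranteed by the definition of $\nabla^{-2}B(x)$. Each $\nabla^2B(x^k)^{-1}$ is symmetric positive definite, so its limit $M$ is symmetric positive semidefinite, and the (unique symmetric PSD) square-root map is continuous on the cone of symmetric PSD matrices, which gives $\nabla^2B(x^k)^{-1/2}\to M^{1/2}$. Fix any $d$ with $\|d\|<1$. Applying the interior case at each $x^k$ yields $x^k+\nabla^2B(x^k)^{-1/2}d\in\cK$, and sending $k\to\infty$ while invoking the closedness of $\cK$ gives $x+M^{1/2}d\in\cK$, as claimed.

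The one subtle point is the convergence $\nabla^2B(x^k)^{-1/2}\to M^{1/2}$ in the boundary case, since $M$ may be singular when $x$ lies on $\partial\cK$. This is nevertheless a classical fact about the matrix square root on the cone of symmetric PSD matrices (via the operator monotonicity of $t\mapsto t^{1/2}$, or via joint continuity of the spectral decomposition), and no rank hypothesis on $M$ is needed. Thus the proof reduces to two standard ingredients---the Dikin ellipsoid property of $\vartheta$-LHSC barriers and the closedness of $\cK$---with the continuity of the square root as the only non-routine verification.
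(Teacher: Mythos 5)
Your proof is correct, and it is also the natural argument: the paper itself does not prove Lemma 2.1 directly but only cites \cite[Theorem~2]{HL21bar}, which establishes exactly this claim by the same two-step reduction---Dikin ellipsoid for the interior case, then closure under limits along a defining sequence. Your reduction to the Dikin ellipsoid is clean (the identity $M^{1/2}\nabla^2B(x)M^{1/2}=I$ for $M=\nabla^2B(x)^{-1}$ holds because the two matrices are simultaneously diagonalizable), the limiting argument correctly invokes the closedness of $\cK$, and you rightly flag the only delicate point, the continuity of the symmetric PSD square-root at singular arguments; that continuity is a standard fact (e.g.\ via $\|A^{1/2}-B^{1/2}\|\le\|A-B\|^{1/2}$ in operator norm) and needs no rank hypothesis. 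The only cosmetic remark is that the interior case also requires observing that $\nabla^{-2}B(x)$ is a singleton there, which you do justify implicitly by the continuity of $\nabla^2B(\cdot)^{-1}$ on $\rmint\cK$, so nothing is missing.
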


\section{Optimality conditions}\label{sec:opt}

Classical first- and second-order optimality conditions for nonlinear optimization can be specialized to problem~\eqref{model:equa-cnstr} (e.g., see \cite[Theorems 3.38 and 3.46]{R11nopt}). However, the inexact counterparts of them are not suitable for the design and analysis of a barrier-AL method for solving \eqref{model:equa-cnstr}. In this section we study some alternative first- and second-order optimality conditions for \eqref{model:equa-cnstr} and also introduce an inexact counterpart of them.

Suppose that $x^*$ is a local minimizer of problem~\eqref{model:equa-cnstr}. To derive optimality conditions, one typically needs to impose a constraint qualification (CQ) for $x^*$. The Robinson's CQ, $\{\nabla c(x^*)^Td:d\in \cT_{\cK}(x^*)\}=\bR^m$, is a natural and general one (e.g., see \cite[Section 3.3.2]{R11nopt}). However, verification of Robinson's CQ may not be easy for a general cone $\cK$. Thus, we instead consider a more easily verifiable CQ that $M^{1/2}\nabla c(x^*)$ has full column rank for some $M\in\nabla^{-2}B(x^*)$, which turns out to be stronger than Robinson's CQ. Indeed, suppose that such a CQ holds at $x^*$ for some $M\in\nabla^{-2}B(x^*)$. It then follows from Lemma~\ref{lem:ginvH} that $\{M^{1/2}\td: \|\td\|<1\} \subseteq \cT_{\cK}(x^*)$ and hence $\{M^{1/2}\td: \td\in\bR^n\} \subseteq \cT_{\cK}(x^*)$.  By this and the full column rank of $M^{1/2}\nabla c(x^*)$, one has 
\[
\{\nabla c(x^*)^Td:d\in \cT_{\cK}(x^*)\} \supseteq \{\nabla c(x^*)^TM^{1/2}\td: \td\in\bR^n\} = \bR^m,
\]
and hence Robinson's CQ holds at $x^*$.

We are now ready to establish some first- and second-order optimality conditions for problem~\eqref{model:equa-cnstr} under the aforementioned CQ, whose proof is relegated to Section~\ref{sec:proof-sec3}.

\begin{theorem}[{{\bf first- and second-order optimality conditions}}]\label{thm:1stopt}
Let $x^*$ be a local minimizer of problem~\eqref{model:equa-cnstr}. Suppose that $f$ is twice continuously differentiable at $x^*$ and $M^{1/2}\nabla c(x^*)$ has full column rank for some $M\in\nabla^{-2}B(x^*)$. Then there exists a Lagrangian multiplier $\lambda^*\in\bR^m$ such that 
\begin{eqnarray}
&&\nabla f(x^*) + \nabla c(x^*)\lambda^* \in\cK^*,\label{1stopt-cond-1}\\
&&M^{1/2}(\nabla f(x^*) + \nabla c(x^*)\lambda^*) = 0,\label{1stopt-cond-2}
\end{eqnarray}
and additionally,
\begin{equation}\label{2ndopt-cond}
d^TM^{1/2}\left(\nabla^2 f(x^*)+\sum_{i=1}^m\lambda_i^*\nabla^2 c_i(x^*)\right)M^{1/2}d \ge 0,\quad \forall d\in\{d:\nabla c(x^*)^TM^{1/2}d=0\}. 
\end{equation}
\end{theorem}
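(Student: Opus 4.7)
The plan is to combine two ingredients: the classical conic KKT theorem applied directly to \eqref{model:equa-cnstr}, and a local change of variables that converts the original problem into a smooth equality-constrained problem to which the standard first- and second-order necessary conditions apply. The hypothesis implies Robinson's CQ, as observed immediately before the theorem, so the classical conic KKT theorem is available.

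The heart of the argument is the parametrization $x = x^* + M^{1/2}d$. By Lemma \ref{lem:ginvH}, $\{x^* + M^{1/2}d : \|d\| < 1\} \subseteq \cK$, so $d = 0$ is a local minimizer of the auxiliary problem
\[
\min_{d}\bigl\{ g(d) := f(x^* + M^{1/2}d) :\ h(d) := c(x^* + M^{1/2}d) = 0,\ \|d\|<1\bigr\}.
\]
Since $\nabla h(0) = M^{1/2}\nabla c(x^*)$ has full column rank by hypothesis, LICQ holds for this auxiliary problem. Classical first-order KKT then yields a unique $\lambda^* \in \bR^m$ with $M^{1/2}\nabla f(x^*) + M^{1/2}\nabla c(x^*)\lambda^* = 0$, which is exactly \eqref{1stopt-cond-2}. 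The classical second-order necessary condition at $d = 0$, together with the Hessian computations $\nabla^2 g(0) = M^{1/2}\nabla^2 f(x^*) M^{1/2}$ and $\nabla^2 h_i(0) = M^{1/2}\nabla^2 c_i(x^*) M^{1/2}$, then delivers \eqref{2ndopt-cond} on the critical subspace $\{d : \nabla c(x^*)^T M^{1/2} d = 0\}$.

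To obtain \eqref{1stopt-cond-1}, I would invoke the classical conic KKT theorem, which under Robinson's CQ produces a multiplier $\tilde{\lambda}^* \in \bR^m$ and some $s^* \in \cN_{\cK}(x^*)$ with $\nabla f(x^*) + \nabla c(x^*)\tilde{\lambda}^* = -s^*$; since $-s^* \in -\cN_{\cK}(x^*) \subseteq \cK^*$, this yields \eqref{1stopt-cond-1} for $\tilde{\lambda}^*$. Next, Lemma \ref{lem:ginvH} gives $x^* + t M^{1/2}d \in \cK$ for every $d \in \bR^n$ and sufficiently small $t>0$, hence $\{M^{1/2}d : d \in \bR^n\} \subseteq \cT_{\cK}(x^*)$. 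Combined with the polarity between $\cN_{\cK}(x^*)$ and $\cT_{\cK}(x^*)$, this forces $s^{*T} M^{1/2} d = 0$ for every $d \in \bR^n$, i.e., $M^{1/2} s^* = 0$. Consequently $\tilde{\lambda}^*$ satisfies the same linear system $M^{1/2}\nabla c(x^*)\lambda = -M^{1/2}\nabla f(x^*)$ as $\lambda^*$, and full column rank of $M^{1/2}\nabla c(x^*)$ forces $\tilde{\lambda}^* = \lambda^*$. Hence $\lambda^*$ also satisfies \eqref{1stopt-cond-1}.

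The delicate point is the reconciliation of the multipliers produced by the auxiliary problem and by the conic KKT theorem: without the uniqueness afforded by LICQ on the auxiliary side, one could only assert the existence of possibly distinct multipliers for \eqref{1stopt-cond-1} and for \eqref{1stopt-cond-2}--\eqref{2ndopt-cond}, whereas the theorem requires a single $\lambda^*$. All other steps reduce to routine chain-rule computations and direct invocations of the classical equality-constrained optimality conditions.
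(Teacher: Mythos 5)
Your argument is correct and follows essentially the same route as the paper: both exploit the auxiliary equality-constrained problem $\min_d\{f(x^*+M^{1/2}d):c(x^*+M^{1/2}d)=0\}$ (with LICQ at $d=0$ supplied by the full column rank of $M^{1/2}\nabla c(x^*)$), couple it with the conic KKT conditions obtained from Robinson's CQ, and identify the two multipliers by uniqueness of the solution to the full-rank linear system. The one small difference is that you re-derive, via $\cN_\cK(x^*)\subseteq-\cK^*$ and the polarity of the tangent and normal cones together with Lemma~\ref{lem:ginvH}, the fact that $\nabla f(x^*)+\nabla c(x^*)\tilde{\lambda}^*\in-\cN_{\cK}(x^*)$ implies \eqref{1stopt-cond-1} and \eqref{1stopt-cond-2}, whereas the paper imports this equivalence wholesale from \cite[Proposition~1]{HL21bar}.
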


\begin{remark}
The relations  \eqref{1stopt-cond-1} and \eqref{1stopt-cond-2} are the first-order optimality conditions of problem~\eqref{model:equa-cnstr}, which are actually equivalent to the classical optimality condition $\nabla f(x^*) + \nabla c(x^*)\lambda^* \in -\cN_{\cK}(x^*)$ (see \cite[Proposition~1]{HL21bar}). 
\end{remark}

Notice that it is generally impossible to find a point exactly satisfying the above first- and second-order optimality conditions. We are instead interested in finding a point satisfying their approximate counterparts. To this end, we next introduce the definition of an approximate first-order stationary point (FOSP) and second-order stationary point (SOSP) of problem \eqref{model:equa-cnstr}.

\begin{definition}[{{\bf $\epsilon_1$-first-order stationary point}}]\label{def-FOSP}
For any $\epsilon_1>0$, a point $x$ is called an $\epsilon_1$-first-order stationary point {\rm($\epsilon_1$-FOSP)} of problem~\eqref{model:equa-cnstr} if it, together with some $\lambda\in\bR^m$, satisfies
\begin{eqnarray}
&&\|c(x)\|\le\epsilon_1,\ x\in\rmint\cK,\label{apx-1st-1}\\
&&\nabla f(x) + \nabla c(x)\lambda \in\cK^*,\label{apx-1st-2}\\
&&\|\nabla f(x) + \nabla c(x)\lambda\|_{x}^*\le\epsilon_1.\label{apx-1st-3}
\end{eqnarray}
\end{definition}

\begin{definition}[{{\bf $(\epsilon_1,\epsilon_2)$-second-order stationary point}}]\label{def-SOSP}
For any $\epsilon_1,\epsilon_2>0$, a point $x$ is called an $(\epsilon_1,\epsilon_2)$-second-order stationary point {\rm($(\epsilon_1,\epsilon_2)$-SOSP)} of problem~\eqref{model:equa-cnstr} if it, together with some $\lambda\in\bR^m$, satisfies \eqref{apx-1st-1}-\eqref{apx-1st-3} and additionally
\begin{equation}\label{apx-2nd}
d^T\nabla^2 B(x)^{-1/2}\left(\nabla^2 f(x) + \sum_{i=1}^m\lambda_i\nabla^2 c_i(x)\right)\nabla^2 B(x)^{-1/2}d\ge-\epsilon_2\|d\|^2,\quad \forall d\in\cC(x),
\end{equation}
where $\cC(\cdot)$ is defined as
\begin{equation}\label{c-cone}
\cC(x):=\{d:\nabla c(x)^T\nabla^2 B(x)^{-1/2}d=0\}.    
\end{equation}
\end{definition}

\begin{remark}
Notice that if the pair $(x,\lambda)$ satisfies \eqref{apx-1st-3} and \eqref{apx-2nd}, then it nearly satisfies \eqref{1stopt-cond-2} and \eqref{2ndopt-cond} with $(x^*,\lambda^*)$ replaced by $(x,\lambda)$. Thus, \eqref{apx-1st-3} and \eqref{apx-2nd} can be viewed as inexact counterparts of \eqref{1stopt-cond-2} and \eqref{2ndopt-cond}. Moreover, the above definitions of $\epsilon_1$-FOSP and $(\epsilon_1,\epsilon_2)$-SOSP are reduced to the ones introduced in \cite{HL21bar} for the case where $c$ is affine.
\end{remark}


\section{A preconditioned Newton-CG method for barrier problems}\label{sec:sbpb-solver}

In this section we propose a preconditioned Newton-CG method in Algorithm~\ref{alg:NCG}, which is a modification of the Newton-CG based barrier method \cite[Algorithm~2]{HL21bar}, for finding an approximate SOSP of the barrier problem
\begin{equation}\label{b-subprob}
\min_{x}\ \{\phi_\mu(x):=F(x)+\mu B(x)\},
\end{equation}
where $F:\bR^n\to\bR$ is twice continuously differentiable in $\rmint\cK$ and $\mu>0$ is a given barrier parameter. Specifically, the proposed method finds an $(\epsilon_g,\epsilon_H)$-SOSP $x$ of problem \eqref{b-subprob} that satisfies
\begin{equation}\label{apx-1st2nd-stat-bsbpb}
\|\nabla \phi_\mu(x)\|_x^*\le\epsilon_g,\quad \lambda_{\min}(\nabla^2B(x)^{-1/2}\nabla^2 \phi_\mu(x)\nabla^2B(x)^{-1/2})\ge-\epsilon_H
\end{equation}
for any prescribed tolerances $\epsilon_g,\epsilon_H\in(0,1)$. It will be used to solve the subproblems arising in the barrier-AL method later.

Our preconditioned Newton-CG method (Algorithm~\ref{alg:NCG}) consists of two main components. The first main component is a modified CG method, referred to as {\it capped CG method}, which was proposed in \cite[Algorithm~1]{ROW20} for solving a possibly indefinite linear system
\begin{equation}\label{indef-sys}
(H+2\varepsilon I)\hat{d}=-g,
\end{equation}
where $0\neq g\in\bR^n$, $\varepsilon>0$, and $H\in\bR^{n\times n}$ is a symmetric matrix. The capped CG method terminates within a finite number of iterations and returns either an approximate solution $\hat{d}$ to \eqref{indef-sys} satisfying $\|(H+2\varepsilon I)\hat{d}+g\|\le\hat{\zeta}\|g\|$ and $\hat{d}^TH\hat{d}\ge-\varepsilon\|\hat{d}\|^2$ for some $\hat{\zeta}\in(0,1)$ or a sufficiently negative curvature direction $\hat{d}$ of $H$ with $\hat{d}^TH\hat{d}<-\varepsilon\|\hat{d}\|^2$. The second main component is a minimum eigenvalue oracle. Given a symmetric matrix $H\in\bR^{n\times n}$ and $\varepsilon>0$, this oracle either produces a sufficiently negative curvature direction $v$ of $H$ with $\|v\|=1$ and  $v^THv\le-\varepsilon/2$ or certifies that $\lambda_{\min}(H)\ge-\varepsilon$ holds with high probability. For ease of reference, we present these two main components in Algorithms~\ref{alg:capped-CG} and \ref{pro:meo} in Appendices~\ref{appendix:capped-CG} and \ref{appendix:meo}, respectively. 

We are now ready to describe our preconditioned Newton-CG method (Algorithm~\ref{alg:NCG}) for solving \eqref{b-subprob}. At iteration $t$, if the first relation in \eqref{apx-1st2nd-stat-bsbpb} is not satisfied at the iterate $x^t$, the capped CG method (Algorithm~\ref{alg:capped-CG}) is invoked to find a descent direction for $\phi_\mu$ by solving the following damped preconditioned Newton system
\begin{equation*}
(M_t^T\nabla^2 \phi_\mu(x^t)M_t + 2\epsilon_H I)\hat{d} = -M_t^T\nabla\phi_\mu(x^t),
\end{equation*}
where $M_t$ is a matrix such that
\begin{equation}\label{Mt}
\nabla^2 B(x^t)^{-1}=M_tM_t^T.
\end{equation}
A line search along this descent direction is then performed to result in a reduction on $\phi_\mu$.  Otherwise, the minimum eigenvalue oracle (Algorithm~\ref{pro:meo}) is invoked. This oracle either produces a sufficiently negative curvature direction of $M_t^T\nabla^2\phi_\mu(x^t)M_t$ along which a line search is performed to result in a reduction on $\phi_\mu$, or certifies that the iterate $x^t$ also satisfies the second relation in \eqref{apx-1st2nd-stat-bsbpb} with high probability and terminates the preconditioned Newton-CG method. The detailed description of our preconditioned Newton-CG method is presented in Algorithm~\ref{alg:NCG}.

\begin{algorithm}[H]
\caption{A preconditioned Newton-CG method for problem~\eqref{b-subprob}}
\label{alg:NCG}
{\footnotesize
\begin{algorithmic}
\State \noindent\textbf{Input}: tolerances $\epsilon_g,\epsilon_H\in(0,1)$, backtracking ratio $\theta\in(0,1)$, starting point $u^0\in\rmint\cK$, CG-accuracy parameter $\zeta\in(0,1)$, maximum step length $\beta\in[\epsilon_H,1)$, line-search parameter $\eta\in(0,1)$, probability parameter $\delta\in(0,1)$;
\State Set $x^0=u^0$;
\For{$t=0,1,2,\ldots$}
\If{$\|\nabla \phi_\mu(x^t)\|_{x^t}^*>\epsilon_g$}
\State Call Algorithm~\ref{alg:capped-CG} (see Appendix~\ref{appendix:capped-CG}) with $H=M_t^T\nabla^2 \phi_\mu(x^t)M_t$, $\varepsilon=\epsilon_H$, $g=M_t^T\nabla \phi_\mu(x^t)$, accuracy parameter $\zeta$,
\State and bound $U=0$ to obtain outputs $\hat{d}^t$, d$\_$type, where $M_t$ is given in \eqref{Mt};
\If{d$\_$type=NC}
\begin{equation}\label{dk-nc}
d^t\leftarrow -\sgn((\hat{d}^t)^TM_t^T\nabla \phi_\mu(x^t))\min\left\{\frac{|(\hat{d}^t)^TM_t^T\nabla^2 \phi_\mu(x^t)M_t \hat{d}^t|}{\|\hat{d}^t\|^3},\frac{\beta}{\|\hat{d}^t\|}\right\}\hat{d}^t;
\end{equation}
\Else\ \{d$\_$type=SOL\}
\begin{equation}\label{dk-sol}
d^t\leftarrow \min\left\{1,\frac{\beta}{\|\hat{d}^t\|}\right\}\hat{d}^t;
\end{equation}
\EndIf
\State Go to {\bf Line Search};
\Else
\State Call Algorithm~\ref{pro:meo} (see Appendix~\ref{appendix:meo}) with $H=M_t^T\nabla^2 \phi_\mu(x^t)M_t$, $\varepsilon=\epsilon_H$, and probability parameter $\delta$;
\If{Algorithm~\ref{pro:meo} certifies that $\lambda_{\min}(M_t^T\nabla^2 \phi_\mu(x^t)M_t)\ge-\epsilon_H$}
\State Output $x^t$ and terminate;
\Else\ \{Sufficiently negative curvature direction $v$ returned by Algorithm~\ref{pro:meo}\}
\State Set d$\_$type=NC and
\begin{equation}\label{dk-2nd-nc}
d^t\leftarrow -\sgn(v^TM_t^T\nabla \phi_\mu(x^t))\min\{|v^TM_t^T\nabla^2 \phi_\mu(x^t)M_tv|,\beta\}v;
\end{equation}
\State Go to {\bf Line Search};
\EndIf
\EndIf
\State{\bf Line Search:}
\If{d$\_$type=SOL}
\State Find $\alpha_t=\theta^{j_t}$, where $j_t$ is the smallest nonnegative integer $j$ such that
\begin{equation}\label{ls-sol}
\phi_\mu(x^t+\theta^jM_td^t)<\phi_\mu(x^t)-\eta\epsilon_H\theta^{2j}\|d^t\|^2;
\end{equation}
\Else\ \{d$\_$type=NC\}
\State Find $\alpha_t=\theta^{j_t}$, where $j_t$ is the smallest nonnegative integer $j$ such that
\begin{equation}\label{ls-nc}
\phi_\mu(x^t+\theta^jM_td^t)<\phi_\mu(x^t)-\eta\theta^{2j}\|d^t\|^3/2;
\end{equation}
\EndIf
\State $x^{t+1}=x^t+\alpha_tM_td^t$;
\EndFor
\end{algorithmic}
}
\end{algorithm}

\subsection{Iteration and operation complexity of Algorithm~\ref{alg:NCG}}
\label{complex-alg1}

In this subsection we study iteration and operation complexity of Algorithm~\ref{alg:NCG}. To proceed, we make the following assumptions on problem~\eqref{b-subprob}.

\begin{assumption}\label{asp:NCG-cmplxity}
\begin{enumerate}[{\rm (a)}]
\item There exists a finite $\phi_{\low}$ such that 
\begin{eqnarray}
&&\phi_\mu(x)\ge\phi_{\low},\quad \forall x\in\rmint\cK,\label{lwbd-b-subpb}\\
&&\cS=\{x\in\rmint\cK:\phi_\mu(x)\le\phi_\mu(u^0)\}\text{ is bounded},\label{b-subpb-set}
\end{eqnarray}
where $u^0\in\rmint\cK$ is the initial point of Algorithm~\ref{alg:NCG} and $\phi_\mu$ is given in \eqref{b-subprob}.
\item There exists $L_H^F>0$ such that 
\begin{equation*}\label{F-Hess-Lip}
\|\nabla^2 F(y)-\nabla^2 F(x)\|_x^*\le L_{H}^F\|y-x\|_x,\quad \forall x,y\in\Omega\text{ with }\|y-x\|_x\le\beta,
\end{equation*}
where $\Omega \subset \rmint\cK$ is an open bounded convex neighborhood of $\cS$ and $\beta\in(0,1)$ is an input of Algorithm~\ref{alg:NCG}.
\item The quantities $U_g^F$ and $U_H^F$ are finite, where
\begin{equation}\label{Fbd-b-subpb}
U_g^F:=\sup_{x\in\cS}\|\nabla F(x)\|_x^*,\quad U_H^F:=\sup_{x\in\cS}\|\nabla^2 F(x)\|_x^*.
\end{equation}
\end{enumerate}
\end{assumption}

Before establishing operation complexity of Algorithm~\ref{alg:NCG}, let us make some observations on its fundamental operations. Firstly, at iteration $t$, the main effort of Algorithm~\ref{alg:NCG} is on the execution of Algorithm~\ref{alg:capped-CG} or \ref{pro:meo} with $H=M_t^T\nabla^2 \phi_\mu(x^t)M_t$. Secondly, the main computational cost of Algorithms~\ref{alg:capped-CG} and \ref{pro:meo} per iteration is on the product of $H$ and a vector $v$. Consequently, it suffices to focus on computing $Hv$. Indeed, notice from \eqref{b-subprob} and \eqref{Mt} that
\[
Hv=M_t^T\nabla^2 \phi_\mu(x^t)M_tv=M^T_t\nabla^2 F(x^t)M_tv+\mu v.
\]
Thus, computing $Hv$ consists of one Hessian-vector product of $F$ and two matrix-vector products involving $M_t$ and $M_t^T$, respectively. We next discuss how to efficiently compute the product of $M_t$ or $M_t^T$ and a vector.

\begin{itemize}
\item When $\cK$ is the nonnegative orthant, its associated barrier function is $B(x)=-\sum_{i=1}^n\ln x_i$. Notice that $\nabla^2 B(x)$ is a diagonal matrix and so is $M_t$. As a result, the operation cost for computing the product of $M_t$ or $M_t^T$ and a vector is $\cO(n)$, which is typically cheaper than the Hessian-vector product of $F$.
\item When $\cK$ is a general cone,  directly computing $M_t$ may be too expensive. In view of  $\nabla^2 B(x^t)=M_t^{-T}M_t^{-1}$ (see \eqref{Mt}), one can instead choose $M_t^{-T}$ as the Cholesky factor of $\nabla^2 B(x^t)$, which is computed only once in each iteration of Algorithm~\ref{alg:NCG}. Once $M_t^{-T}$ is available, the product of $M_t$ or $M_t^T$ and a vector can be computed by performing backward or forward substitution to a linear system with coefficient matrix $M_t^{-1}$ or $M_t^{-T}$. 
\end{itemize}
Based on the above discussion, we conclude that: (i) when $\cK$ is the nonnegative orthant, the fundamental operations of Algorithm~\ref{alg:NCG} consist only of the Hessian-vector products of $F$; (ii) when $\cK$ is a general cone, the fundamental operations of Algorithm~\ref{alg:NCG} consist of the Hessian-vector products of $F$, Cholesky factorizations of $\nabla^2 B$, and backward or forward substitutions to a triangular linear system.

The following theorem states the iteration and operation complexity of Algorithm~\ref{alg:NCG}, whose proof is deferred to Section~\ref{sec:proof-sec4}.

\begin{theorem}[{{\bf Complexity of Algorithm~\ref{alg:NCG}}}]\label{thm:NCG-iter-oper-cmplxity}
Suppose that Assumption~\ref{asp:NCG-cmplxity} holds. Let
\begin{equation}
T_1=\left\lceil\frac{\phi_{\hi}-\phi_{\low}}{\min\{c_{\rm sol},c_{\rm nc}\}}\max\{\epsilon_g^{-2}\epsilon_H,\epsilon_H^{-3}\}\right\rceil+\left\lceil\frac{\phih-\phil}{c_{\rm nc}}\epsilon_H^{-3}\right\rceil+1,\
T_2=\left\lceil\frac{\phih-\phil}{c_{\rm nc}}\epsilon_H^{-3}\right\rceil+1,\label{T1}
\end{equation}
where $\phi_{\hi}=\phi_\mu(u^0)$, $\phi_{\low}$ is given in \eqref{lwbd-b-subpb}, and
\begin{eqnarray}
&c_{\rm sol}=\eta\min\left\{\left[\frac{4(1-\beta)}{4+\zeta+\sqrt{(4+\zeta)^2+8[(1-\beta)L^F_H+\mu(2-\beta)/(1-\beta)]}}\right]^2,\left[\frac{\min\{6(1-\eta),2\}\theta}{L^F_H+\mu(2-\beta)/(1-\beta)^2}\right]^2\right\},\label{csol}\\
&c_{\rm nc} = \frac{\eta}{16} \min\left\{1,\left[\frac{\min\{3(1-\eta),1\}\theta}{L^F_H+\mu(2-\beta)/(1-\beta)^2}\right]^2\right\}.\label{cnc}
\end{eqnarray}
Then the following statements hold.
\begin{enumerate}[{\rm (i)}]
\item The total number of calls of Algorithm~\ref{pro:meo} in Algorithm~\ref{alg:NCG} is at most $T_2$.	
\item The total number of calls of Algorithm~\ref{alg:capped-CG} in Algorithm~\ref{alg:NCG} is at most $T_1$.
\item {\rm ({\bf iteration complexity})} Algorithm~\ref{alg:NCG} terminates in at most $T_1+T_2$ iterations with
\begin{equation}\label{NCG-iter}
T_1+T_2=\cO((\phih-\phil)(L_H^F)^2\max\{\epsilon_g^{-2}\epsilon_H,\epsilon_H^{-3}\}).
\end{equation}
Moreover, its output $x^t$ satisfies the first relation in \eqref{apx-1st2nd-stat-bsbpb} deterministically and 
the second relation in \eqref{apx-1st2nd-stat-bsbpb} with probability at least $1-\delta$ for 
some $0 \le t \le T_1+T_2$.
\item {\rm ({\bf operation complexity})} The total numbers of Cholesky factorizations and other fundamental operations 
 consisting of the Hessian-vector products of $F$ and backward or forward substitutions to a triangular linear system
 required by Algorithm~\ref{alg:NCG} are at most $T_1+T_2$ and
\begin{equation*}\label{NCG-oper}
\widetilde{\cO}((\phih-\phil)(L_H^F)^2\max\{\epsilon_g^{-2}\epsilon_H,\epsilon_H^{-3}\}\min\{n,(U_H^F/\epsilon_H)^{1/2}\}),
\end{equation*} 
respectively, where $U_H^F$ is given in \eqref{Fbd-b-subpb}.
\end{enumerate}
\end{theorem}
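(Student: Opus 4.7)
My plan is to follow the Newton-CG template of \cite{ROW20,HL21bar}: show that every iteration of Algorithm~\ref{alg:NCG} either decreases $\phi_\mu$ by a quantifiable amount (SOL or NC step) or triggers the successful termination condition of Algorithm~\ref{pro:meo}; then partition the iteration budget by step type and sum the per-iteration decrements against the lower bound $\phi_{\low}$ to obtain (i)--(iii); finally, multiply by the per-call work of Algorithms~\ref{alg:capped-CG} and \ref{pro:meo} to get (iv).

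First I would show that the trial point $x^t + \alpha M_t d^t$ stays in $\rmint\cK$ whenever $\alpha\|d^t\| \le \beta < 1$. This is the Dikin-ellipsoid consequence of $B$ being $\vartheta$-LHSC together with \eqref{Mt}: since $\|M_t d^t\|_{x^t} = \|d^t\|$, the point $x^t+\alpha M_t d^t$ lies in the unit Dikin ball at $x^t$, which is contained in $\rmint\cK$. Using this, I would derive a preconditioned second-order Taylor estimate for $\phi_\mu = F + \mu B$ along the ray $x^t + s M_t d^t$. For the $F$ piece, Assumption~\ref{asp:NCG-cmplxity}(b) (Lipschitz Hessian in the local norm) yields a cubic remainder with constant $L_H^F$; for the $\mu B$ piece, the self-concordance inequality $\nabla^2 B(x^t+sM_td^t) \preceq (1-s\|d^t\|)^{-2}\nabla^2 B(x^t)$ yields the $\mu(2-\beta)/(1-\beta)^2$ factor that appears in \eqref{csol} and \eqref{cnc}. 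Combining, I would get an estimate of the form
\[
\phi_\mu(x^t+\alpha M_t d^t) \le \phi_\mu(x^t) + \alpha \nabla\phi_\mu(x^t)^T M_t d^t + \tfrac{\alpha^2}{2} (d^t)^T M_t^T \nabla^2\phi_\mu(x^t) M_t d^t + \tfrac{\alpha^3}{6}\bigl(L_H^F + \tfrac{\mu(2-\beta)}{(1-\beta)^2}\bigr)\|d^t\|^3,
\]
valid for $\alpha\|d^t\|\le\beta$.

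Next I would analyze the two step types. For SOL steps, the capped CG guarantees $\|(H+2\epsilon_H I)\hat d^t + g\|\le \zeta\|g\|$ and $(\hat d^t)^T H \hat d^t \ge -\epsilon_H\|\hat d^t\|^2$ with $g=M_t^T\nabla\phi_\mu(x^t)$, from which one gets $-g^T\hat d^t \ge (1-\zeta)\|g\|\|\hat d^t\|$ and a lower bound on $\|\hat d^t\|$ of order $\|g\|/(L_H^F+\mu/(1-\beta)^2)$. Plugging into the Taylor estimate and comparing with the Armijo condition \eqref{ls-sol} gives a minimal accepted step length, hence a decrement at least $c_{\rm sol}\min\{\epsilon_g^2/\epsilon_H,\epsilon_H^3\}$ (the min arises from the two candidates in \eqref{dk-sol}). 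For NC steps from either \eqref{dk-nc} or \eqref{dk-2nd-nc}, I would use $(d^t)^T M_t^T\nabla^2\phi_\mu(x^t) M_t d^t \le -\|d^t\|^3$ (after the scaling in \eqref{dk-nc}) or $\le -(\epsilon_H/2)\|d^t\|^2$ (after the scaling in \eqref{dk-2nd-nc}), show that the cubic Armijo test \eqref{ls-nc} is satisfied at step length $\Theta(\min\{1,\theta/(L_H^F+\mu/(1-\beta)^2)\})$, and conclude a per-iteration decrement of at least $c_{\rm nc}\epsilon_H^3$, matching \eqref{cnc}.

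Parts (i)--(iii) then follow by bookkeeping. Every call of Algorithm~\ref{pro:meo} that does not terminate Algorithm~\ref{alg:NCG} produces an NC step and hence decreases $\phi_\mu$ by at least $c_{\rm nc}\epsilon_H^3$; since the total decrease cannot exceed $\phi_{\hi}-\phi_{\low}$, this yields (i) with $T_2 = \lceil(\phi_{\hi}-\phi_{\low})\epsilon_H^{-3}/c_{\rm nc}\rceil+1$. Every call of Algorithm~\ref{alg:capped-CG} produces either a SOL step (decrement $\ge c_{\rm sol}\min\{\epsilon_g^2/\epsilon_H,\epsilon_H^3\}$) or an NC step (decrement $\ge c_{\rm nc}\epsilon_H^3$), which gives (ii) and then (iii) by summation. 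The high-probability SOSP claim for the output is immediate from the failure probability $\delta$ in Algorithm~\ref{pro:meo} and the definition of successful termination. For (iv), the per-iteration work is dominated by one Cholesky factorization of $\nabla^2 B(x^t)$ (to form $M_t^{-T}$) plus the matrix-vector products inside Algorithm~\ref{alg:capped-CG} or \ref{pro:meo}; the standard CG and Lanczos analyses give the $\widetilde{\cO}(\min\{n,(U_H^F/\epsilon_H)^{1/2}\})$ bound on the number of such products per outer iteration, yielding the stated operation complexity.

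The main obstacle I anticipate is the coupled Taylor expansion: one must simultaneously handle the Lipschitz Hessian of $F$ measured in the local norm and the self-concordance-driven growth of $\nabla^2 B$, and arrange the resulting cubic bound so that the denominators $(1-\beta)^2$ and $(L_H^F + \mu(2-\beta)/(1-\beta)^2)$ match the constants in \eqref{csol}--\eqref{cnc} exactly. A secondary subtlety is verifying that the scaling prefactors in \eqref{dk-nc} and \eqref{dk-2nd-nc}, together with the cap $\beta$, keep $\|M_t d^t\|_{x^t}\le\beta<1$ throughout the line search so that the Dikin-ball containment above is applicable on every trial.
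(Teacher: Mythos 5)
Your approach mirrors the paper's (Dikin-ball containment, a combined Taylor estimate with the self-concordance factor $\mu(2-\beta)/(1-\beta)^2$, per-iteration decrement lemmas for SOL and NC steps, then summation against $\phi_{\hi}-\phi_{\low}$), but the bookkeeping for part (ii) has a genuine gap. You assert that every SOL step decreases $\phi_\mu$ by at least $c_{\rm sol}\min\{\epsilon_g^2/\epsilon_H,\epsilon_H^3\}$. That is not what the decrement lemma can deliver: the correct bound, obtained by comparing the line-search residual at the \emph{accepted} step against $\nabla\phi_\mu$, is
\[
\phi_\mu(x^t)-\phi_\mu(x^{t+1})\ \ge\ c_{\rm sol}\,\min\bigl\{(\|\nabla\phi_\mu(x^{t+1})\|_{x^{t+1}}^*)^2\,\epsilon_H^{-1},\ \epsilon_H^3\bigr\},
\]
which involves the gradient at the \emph{next} iterate $x^{t+1}$, not the threshold $\epsilon_g$. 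If $\|\nabla\phi_\mu(x^{t+1})\|_{x^{t+1}}^*$ is much smaller than $\epsilon_g$ (it can be arbitrarily small), your claimed decrement fails, and the simple sum you propose would only prove a bound of $\lceil(\phi_{\hi}-\phi_{\low})/\min\{c_{\rm sol},c_{\rm nc}\}\,\max\{\epsilon_g^{-2}\epsilon_H,\epsilon_H^{-3}\}\rceil$ on the number of capped CG calls, which is strictly smaller than the stated $T_1$ and is not achievable without a further argument.

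The missing idea is a dichotomy on the next iterate: if a capped CG call at iteration $t$ produces $x^{t+1}$ with $\|\nabla\phi_\mu(x^{t+1})\|_{x^{t+1}}^*\le\epsilon_g$, then Algorithm~\ref{pro:meo} is necessarily invoked at iteration $t+1$, so the number of such ``SOL steps landing at a small-gradient point'' is bounded by the number of calls of Algorithm~\ref{pro:meo}, i.e.\ by $T_2$. Only the remaining capped CG calls --- those with $\|\nabla\phi_\mu(x^{t+1})\|_{x^{t+1}}^*>\epsilon_g$ --- enjoy the decrement $\ge\min\{c_{\rm sol},c_{\rm nc}\}\min\{\epsilon_g^2\epsilon_H^{-1},\epsilon_H^3\}$ that your summation requires. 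This is exactly why the theorem's $T_1$ contains the additional term $\lceil(\phi_{\hi}-\phi_{\low})\epsilon_H^{-3}/c_{\rm nc}\rceil+1=T_2$. Once you insert this counting step, the rest of your argument (parts (i), (iii), and (iv)) goes through as you describe.
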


\section{A Newton-CG based barrier-AL method for problem \eqref{model:equa-cnstr}}
\label{sec:AL-method}
In this section we propose a Newton-CG based barrier-AL method for finding a stochastic $(\epsilon,\sqrt{\epsilon})$-SOSP of problem~\eqref{model:equa-cnstr} for any prescribed tolerance $\epsilon\in(0,1)$. 

 Recall that $B$ is the $\vartheta$-LHSC barrier function associated with $\cK$ for some $\vartheta\ge 1$.  We now make the following additional assumptions on problem~\eqref{model:equa-cnstr}.

\begin{assumption}\label{asp:lowbd-knownfeas}		
\begin{enumerate}[{\rm (a)}]
\item An $\epsilon/2$-approximately strictly feasible point $z_{\epsilon}$ of problem~\eqref{model:equa-cnstr}, namely satisfying $z_{\epsilon}\in\rmint\cK$ and $\|c(z_{\epsilon})\|\le\epsilon/2$, is known.
\item There exist constants $\fh,\fl\in\bR$ and  $\gamma, \bdelta_f, \bdelta_c>0$, independent of $\epsilon$, such that
\begin{eqnarray}
&&f(z_{\epsilon})+\mu B(z_{\epsilon})\le\fh,\quad \forall \mu\in(0,\mu_0], \label{ubd}\\
&&f(x)+\mu B(x)+\gamma\|c(x)\|^2/2\ge \fl,\quad \forall \mu\in(0,\mu_0], x\in\rmint\cK,\label{ineq:lower-bound-penalty}\\
&&\cS(\bdelta_f,\bdelta_c):=\bigcup_{\mu\in(0,\mu_0]}\{x\in\rmint\cK:f(x)+\mu B(x)\le \fh+\bdelta_f,\|c(x)\|\le 1+\bdelta_c\}\text{ is bounded,}\label{nearly-feas-level-set}
\end{eqnarray}
where $\mu_0=1/(2\vartheta^{1/2}+2)$ and $z_{\epsilon}$ is given in (a).
\item There exist $L_H^f, L_H^c>0$ and $\beta\in(0,1)$ such that
\begin{equation}\label{Hes-loc-Lip}
\begin{array}{l}
\|\nabla^2 f(y)-\nabla^2 f(x)\|_x^*\le L_H^f\|y-x\|_x,\quad  \forall x,y\in\Omega(\delta_f,\delta_c)\text{ with }\|y-x\|_x\le\beta,\\[5pt]
\|\nabla^2 c_i(y)-\nabla^2 c_i(x)\|_x^*\le L_H^c\|y-x\|_x,\quad \forall  x,y\in\Omega(\delta_f,\delta_c)\text{ with }\|y-x\|_x\le\beta,\ 1\le i\le m,
\end{array}    
\end{equation}
where $\Omega(\delta_f,\delta_c)\subset \rmint\cK$ is an open bounded convex neighborhood of $\cS(\delta_f,\delta_c)$.
\item The quantities $U_g^f$, $U_g^c$, $U_H^f$ and $U_H^c$ are finite, where
\begin{eqnarray}
&U_g^f=\sup_{x\in\Omega(\delta_{f},\delta_c)}\|\nabla f(x)\|_x^*,\quad U_g^c=\sup_{x\in\Omega(\delta_{f},\delta_c)}\max_{1\le i\le m}\|\nabla c_i(x)\|_x^*,\label{g-loc-upbd}\\
&U_H^f=\sup_{x\in\Omega(\delta_{f},\delta_c)}\|\nabla^2 f(x)\|_x^*,\quad U_H^c=\sup_{x\in\Omega(\delta_{f},\delta_c)}\max_{1\le i\le m}\|\nabla^2 c_i(x)\|_x^*.\label{H-loc-upbd}
\end{eqnarray}
\end{enumerate}
\end{assumption}

We next make some remarks about Assumption~\ref{asp:lowbd-knownfeas}.

\begin{remark}
\begin{enumerate}[{\rm (i)}]

\item A similar assumption as Assumption~\ref{asp:lowbd-knownfeas}(a) was  considered in the study of AL methods for nonconvex equality constrained optimization (e.g., see \cite{CGLY17,GY21,HL21al,LZ12,XW21PAL}). By imposing Assumption~\ref{asp:lowbd-knownfeas}(a), we restrict our study on problem~\eqref{model:equa-cnstr} for which an $\epsilon/2$-approximately strictly feasible point $z_{\epsilon}$ can be found by an inexpensive procedure. Assumption~\ref{asp:lowbd-knownfeas}(a) often holds in practice. For example,  when the constraints of \eqref{model:equa-cnstr} consist of sphere and nonnegative orthant constraints, a strictly feasible point is readily available.  Also, when $c$ is an affine mapping and $\cK$ is the nonnegative orthant,  a strictly feasible point of \eqref{model:equa-cnstr} can be found using interior point methods. In addition,  when the generalized LICQ condition $\lambda_{\min}(\nabla c(x)^T\nabla^2 B(x)^{-1}\nabla c(x))\ge\sigma^2$ (see Assumption~\ref{asp:LICQ} below) holds on a level set of $\|c(x)\|^2+\mu B(x)$ 
for $\mu=\sigma\epsilon/(2\vartheta^{1/2})$ and some constant $\sigma>0$ and $F(x)=\|c(x)\|^2$ satisfies Assumption~\ref{asp:NCG-cmplxity}, the point $z_{\epsilon}$ can be found by applying our preconditioned Newton-CG method (Algorithm~\ref{alg:NCG}) to the barrier problem $\min_x \|c(x)\|^2 +\mu B(x)$ to find $z_{\epsilon}$ satisfying $\|\nabla(\|c(z_{\epsilon})\|^2 +\mu B(z_{\epsilon}))\|_{z_{\epsilon}}^*\le\sigma\epsilon/2$. It can be verified that such  $z_{\epsilon}$ satisfies Assumption~\ref{asp:lowbd-knownfeas}(a).  As observed from Theorem~\ref{thm:NCG-iter-oper-cmplxity}, the resulting iteration and operation complexity for finding such $z_{\epsilon}$ are respectively $\cO(\epsilon^{-3/2})$ and $\widetilde{\cO}(\epsilon^{-3/2}\min\{n,\epsilon^{-1/4}\})$, which are negligible compared with those of our barrier-AL method (see Theorems~\ref{thm:total-iter-cmplxity} and \ref{thm:total-iter-cmplxity2} below).  
\item Assumption~\ref{asp:lowbd-knownfeas}(b) is mild. In particular, the assumption in \eqref{ubd} holds if $f(x)+\mu_0[B(x)]_+$ is bounded above for all $x\in\rmint\cK$ with $\|c(x)\|\le 1$. Besides, the function $f(x)+\mu B(x)+\gamma\|c(x)\|^2/2$ is a barrier-quadratic penalty function of problem~\eqref{model:equa-cnstr} and is typically bounded below on $\rmint\cK$. In addition, letting $z^0$ be an arbitrary point in $\rmint\cK$,
it can be shown that $\cS(\delta_f,\delta_c)\subseteq\cS_1\cup \cS_2$, where
\begin{equation*}
\begin{array}{l}
\cS_1=\left\{x\in\rmint\cK:f(x)\le f_{\hi}+\delta_f+\mu_0+\mu_0[B(z^0)]_+,B(x)\ge-1-[B(z^0)]_+,\|c(x)\|\le1+\delta_c\right\},\\[5pt]
\cS_2=\left\{x\in\rmint\cK:\frac{f(x)}{-B(x)}\le\frac{[f_{\hi}+\delta_f]_+}{1+[B(z^0)]_+}+\mu_0,B(x)\le-1-[B(z^0)]_+,\|c(x)\|\le1+\delta_c\right\},
\end{array}    
\end{equation*}
and $t_+=\max\{0,t\}$ for all $t\in\bR$.
Thus, the assumption in \eqref{nearly-feas-level-set} holds if $\cS_1$ and $\cS_2$ are bounded. The latter holds, for example, for the problem with $f(x)=\ell(x) +\sum_{i=1}^n x_i^p$, $B(x) = -\sum_{i=1}^n\ln x_i$ and $\cK=\bR^n_+$ studied in \cite{HLY19}, where $\ell:\bR^n\to\bR_+$ is a loss function and $p > 0$.

\item Assumptions~\ref{asp:lowbd-knownfeas}(c) means that $\nabla^2 f$ and $\nabla^2 c_i$, $1\le i\le m$, are locally Lipschitz continuous in $\Omega(\delta_f,\delta_c)$ with respect to the local norms. As pointed out in \cite[Section~5]{HL21bar}, such local Lipschitz continuity is weaker than the global Lipschitz continuity of $\nabla^2 f$ and $\nabla^2 c_i$, $1\le i\le m$, in $\Omega(\delta_f,\delta_c)$. Besides, Assumption~\ref{asp:lowbd-knownfeas}(d) holds if 
$f$ and $c$ are twice continuously differentiable in an open set containing $\cK$. 
\end{enumerate}
\end{remark}

\subsection{A Newton-CG based barrier-AL method}\label{sbsc:NCGBAL}

We now describe our Newton-CG based barrier-AL method (Algorithm~\ref{alg:2nd-order-AL-nonconvex}) for finding a stochastic $(\epsilon,\sqrt{\epsilon})$-SOSP of problem~\eqref{model:equa-cnstr} for a prescribed tolerance $\epsilon\in(0,1)$. Instead of solving \eqref{model:equa-cnstr} directly, our method solves a sequence of perturbed equality constrained barrier problems
\begin{equation}\label{md:eq-bar-pb}
\min_x\{f(x)+\mu_k B(x):\tilde{c}(x)=0\},
\end{equation}
where $\mu_k$ is given in Algorithm~\ref{alg:2nd-order-AL-nonconvex}, $z_\epsilon$ is given in Assumption~\ref{asp:lowbd-knownfeas}(a), and 
\begin{equation}
\tilde{c}(x):=c(x)-c(z_\epsilon).\label{per-eq-constr}
\end{equation}
It follows a similar AL framework as the one proposed in \cite{HL21al}. In particular, at the $k$th iteration, it first applies the preconditioned Newton-CG method (Algorithm~\ref{alg:NCG}) to find an approximate stochastic SOSP $x^{k+1}$  of the subproblem:
\begin{equation}\label{def:bar-AL-func}
\min_{x}\left\{\cL_{\mu_k}(x,\lambda^k;\rho_k):=f(x)+\mu_k B(x) + (\lambda^k)^T\tilde{c}(x) + \frac{\rho_k}{2}\|\tilde{c}(x)\|^2\right\},
\end{equation} 
which is an AL subproblem associated with \eqref{md:eq-bar-pb}. 
Then the standard multiplier estimate $\tilde{\lambda}^{k+1}$ is updated by the classical scheme (see step 3 of Algorithm~\ref{alg:2nd-order-AL-nonconvex}), and the truncated Lagrangian multiplier $\lambda^{k+1}$ is updated by projecting $\tilde{\lambda}^{k+1}$ onto a Euclidean ball (see step 5 of Algorithm~\ref{alg:2nd-order-AL-nonconvex}).\footnote{
The $\lambda^{k+1}$ is also called a safeguarded Lagrangian multiplier, which has been used in the literature for designing some AL methods (e.g., see \cite{ABM08,BM14,HL21al,KS17example}). It has been shown to enjoy many practical and theoretical advantages (e.g., see \cite{BM14}).} Finally, the penalty parameter $\rho_{k+1}$ is adaptively updated according to the improvement on constraint violation (see step~\ref{altstep:penalty} of Algorithm~\ref{alg:2nd-order-AL-nonconvex}). This update scheme is very practical and widely used in AL type methods (e.g., see \cite{ABM08,B97,CGLY17}).  

\begin{algorithm}[h]
\caption{A Newton-CG based barrier-AL method for problem~\eqref{model:equa-cnstr}}
{\small
\label{alg:2nd-order-AL-nonconvex}
Let $\gamma$ and $\mu$ be given in Assumption \ref{asp:lowbd-knownfeas}.\\
\noindent\textbf{Input}: $\epsilon\in(0,1)$, $\Lambda>0$, $x^0\in\rmint\cK$, $\lambda^0 \in \mathcal{B}_{\Lambda}$, $\rho_0>2\gamma$, $\alpha\in(0,1)$, ${r}>1$, $\delta\in(0,1)$, $z_\epsilon$ given in Assumption~\ref{asp:lowbd-knownfeas}(a), and $\mu_k=\max\{\epsilon,r^{k\log\epsilon/\log 2}\}/(2\vartheta^{1/2}+2)$ for all $k\ge0$.
\begin{algorithmic}[1]
\State Set $k=0$. 
\State Call Algorithm \ref{alg:NCG} with $\epsilon_g=\mu_k$, $\epsilon_H=\sqrt{\mu_k}$ and $u^0=x^{k}_{\init}$ to find an approximate solution $x^{k+1}\in\rmint\cK$ to $\min_x\mathcal{L}_{\mu_k}(x,\lambda^k;\rho_k)$ such that
\begin{eqnarray}
&&\cL_{\mu_k}(x^{k+1},\lambda^k;\rho_k)\le f(z_\epsilon) + \mu_k B(z_\epsilon),\quad  \|\nabla_x \cL_{\mu_k}(x^{k+1},\lambda^k;\rho_k)\|_{x^{k+1}}^*\le\mu_k,\label{algstop:1st-order}\\
&&\lambda_{\min}(M_{k+1}^T\nabla^2_{xx}\mathcal{L}_{\mu_k}(x^{k+1},\lambda^k;\rho_k)M_{k+1})\ge-\sqrt{\mu_k}\  \text{with probability at least } 1-\delta, \label{algstop:2nd-order}
\end{eqnarray}
where $M_{k+1}$ is defined as in \eqref{Mt} and
\begin{equation}\label{def:initial-iterate-subprob}
x^k_{\init}=\left\{\begin{array}{ll}
z_\epsilon&\text{if }\cL_{\mu_k}(x^k,\lambda^k;\rho_k)>f(z_\epsilon) + {\mu_k} B(z_\epsilon),\\
x^{k}&\text{otherwise},
\end{array}\right.\quad\text{for }k\ge 0.
\end{equation}
\State Set $\tilde{\lambda}^{k+1}=\lambda^k+\rho_k\tilde{c}(x^{k+1})$.
\State If $\mu_k\le\epsilon/(2\vartheta^{1/2}+2)$ and $\|c(x^{k+1})\|\le\epsilon$, then output $(x^{k+1},\tilde{\lambda}^{k+1})$ and terminate.\label{algstep:stop}
\State Set $\lambda^{k+1}=\Pi_{\mathcal{B}_\Lambda}(\tilde{\lambda}^{k+1})$.\label{algstep:proj-multiplier}
\State If $k=0$ or $\|\tilde{c}(x^{k+1})\|>\alpha\|\tilde{c}(x^k)\|$, set $\rho_{k+1}={r}\rho_k$. Otherwise, set $\rho_{k+1}=\rho_k$.\label{altstep:penalty}
\State Set $k\leftarrow k+1$, and go to step 2.
\end{algorithmic}
}
\end{algorithm}

\begin{remark}\label{rmk:alg2}
\begin{enumerate}[{\rm (i)}]
	
\item Notice that the starting point $x^0_{\init}$ of Algorithm~\ref{alg:2nd-order-AL-nonconvex} can be different from $z_\epsilon$ and it may be rather infeasible, though $z_\epsilon$ is a nearly feasible point of \eqref{model:equa-cnstr}. Besides, $z_\epsilon$ is used to monitor convergence of Algorithm~\ref{alg:2nd-order-AL-nonconvex}. Specifically, if the algorithm runs into a ``poorly infeasible point" $x^k$, namely satisfying $\cL_{\mu_k}(x^k,\lambda^k;\rho_k)>f(z_\epsilon)+\mu_k B(z_\epsilon)$, it will be superseded by $z_\epsilon$ (see \eqref{def:initial-iterate-subprob}), which prevents the iterates $\{x^k\}$ from converging to an infeasible point. Yet, $x^k$ may be rather infeasible when $k$ is not large. Thus, Algorithm~\ref{alg:2nd-order-AL-nonconvex} substantially differs from a funneling or two-phase type algorithm, in which a nearly feasible point is found in Phase 1, and then approximate stationarity is sought while near feasibility is maintained throughout Phase 2 (e.g., see \cite{BGMST16,bueno2020complexity,cartis2013evaluation,cartis2014complexity,CGT14,
cartis2015evaluation,cartis2019evaluation,curtis2018complexity}).
	
\item The choice of $\rho_0$ in Algorithm \ref{alg:2nd-order-AL-nonconvex} is mainly for the simplicity of complexity analysis. Yet, it may be overly large and lead to highly ill-conditioned AL subproblems in practice. To make Algorithm \ref{alg:2nd-order-AL-nonconvex} practically more efficient, one can possibly modify it by choosing a relatively small initial penalty parameter, then solving the subsequent AL subproblems by a first-order method until an $\epsilon_1$-first-order stationary point $\hat x$ of \eqref{md:eq-bar-pb} along with a Lagrangian multiplier $\hat \lambda$ is found,  and finally performing the steps described in Algorithm \ref{alg:2nd-order-AL-nonconvex} but with $x^0=\hat{x}$ and $\lambda^0=\Pi_{\mathcal{B}_\Lambda}(\hat \lambda)$. 


\item Algorithm~\ref{alg:2nd-order-AL-nonconvex} can be easily extended to find an $(\epsilon,\sqrt{\epsilon})$-SOSP of a more general conic optimization problem of the form $\min_{x,y} \{\tilde{f}(x,y):\tilde{c}(x,y)=0,y\in\cK\}$. Indeed, one can follow almost the same framework as  Algorithm~\ref{alg:2nd-order-AL-nonconvex}, except that the associated subproblems are solved by a preconditioned Newton-CG method, which is a slight modification of Algorithm \ref{alg:NCG} by choosing the preconditioning matrix $\widetilde{M}_k$ as the one satisfying
\[
\begin{bmatrix}
I& 0\\
0 &\nabla^2B(y^k)
\end{bmatrix}^{-1} =  \widetilde{M}_k\widetilde{M}_k^T.
\]
\item {It is worth mentioning that Algorithm~\ref{alg:2nd-order-AL-nonconvex} shares some similarities with classical primal interior point methods for convex conic optimization. Specifically, Algorithm~\ref{alg:2nd-order-AL-nonconvex} applies a damped Newton's method to solve a sequence of barrier-AL subproblems, while the primal interior point method applies a projected Newton's method to solve a sequence of constrained barrier subproblems.}
\end{enumerate}
\end{remark}

Before analyzing the complexity of Algorithm \ref{alg:2nd-order-AL-nonconvex}, we first argue that it is well-defined if $\rho_0$ is suitably chosen. Specifically, we will show that when $\rho_0$ is sufficiently large, one can apply Algorithm \ref{alg:NCG} to the subproblem $\min_x\cL_{\mu_k}(x,\lambda^k;\rho_k)$ with $x^k_{\init}$ as the initial point to find an $x^{k+1}$ satisfying \eqref{algstop:1st-order} and \eqref{algstop:2nd-order}. 
To this end, we start by noting from \eqref{ubd}, \eqref{per-eq-constr}, \eqref{def:bar-AL-func} and \eqref{def:initial-iterate-subprob} that
\begin{equation}\label{L-xinit}
\cL_{\mu_k}(x^{k}_{\init},\lambda^k;\rho_k) \overset{\eqref{def:initial-iterate-subprob}}{\le} \max\{\cL_{\mu_k}(z_\epsilon,\lambda^k;\rho_k),f(z_\epsilon) + \mu_kB(z_\epsilon)\}\overset{\eqref{per-eq-constr}\eqref{def:bar-AL-func}}{=}f(z_\epsilon) + \mu_k B(z_\epsilon)\overset{\eqref{ubd}}{\le}\fh.
\end{equation}
Based on this observation, we show in the next lemma that when $\rho_0$ is sufficiently large, $\cL_{\mu_k}(\cdot,\lambda^k;\rho_k)$ is bounded below and its certain level set is bounded, whose proof is deferred to Section~\ref{sec:proof-sec4}.

\begin{lemma}[{{\bf Properties of {$\cL_{\mu_k}(\cdot,\lambda^k;\rho_k)$} and $\cL(\cdot,\lambda^k;\rho_k)$}}]\label{lem:level-set-augmented-lagrangian-func}
Suppose that Assumption~\ref{asp:lowbd-knownfeas} holds. Let $(\lambda^k, \rho_k)$ be generated at the $k$th iteration of Algorithm~\ref{alg:2nd-order-AL-nonconvex} for some $k\ge0$, and 
\begin{equation}\label{def:ALfunc}
\cL(x,\lambda^k;\rho_k):=f(x)+(\lambda^k)^T\tilde{c}(x)+\frac{\rho_k}{2}\|\tilde{c}(x)\|^2.
\end{equation}
Let $\cS(\bdelta_f,\bdelta_c)$ and $x_{\init}^k$ be respectively defined in \eqref{nearly-feas-level-set} and \eqref{def:initial-iterate-subprob}, {$\mu_k$ be given in Algorithm~\ref{alg:2nd-order-AL-nonconvex},} and let $\bdelta_f$, $\bdelta_c$, $\fh$, $\fl$, $L_H^f$, $L^c_H$, $U^f_H$, $U^c_g$, $U^c_H$ and $\Omega(\bdelta_f,\bdelta_c)$ be given in Assumption \ref{asp:lowbd-knownfeas}. Suppose that $\rho_0$ is sufficiently large such that $\delta_{f,1} \le \bdelta_f$ and $\delta_{c,1} \le \bdelta_c$, where 
\begin{equation}\label{def:delta0c-rhobar1xxx}
\delta_{f,1}:=\Lambda^2/(2\rho_0)\ \text{ and }\ \delta_{c,1}:=\sqrt{\frac{2(\fh-\fl+\gamma)}{\rho_0-2\gamma}+\frac{\Lambda^2}{(\rho_0-2\gamma)^2}}+\frac{\Lambda}{\rho_0-2\gamma}.
\end{equation} 
Then the following statements hold.
\begin{enumerate}[{\rm (i)}]
\item {$\{x\in\rmint\cK:\cL_{\mu_k}(x,\lambda^k;\rho_k)\le \cL_{\mu_k}(x^{k}_{\init},\lambda^k;\rho_k)\}\subseteq \cS(\bdelta_f,\bdelta_c)$}.
\item {$\inf_{x\in\rmint\cK} \cL_{\mu_k}(x,\lambda^k;\rho_k) \ge\fl-\gamma-\Lambda\bdelta_c$}.
\item $\|\nabla^2_{xx}\cL(y,\lambda^k;\rho_k)-\nabla^2_{xx}\cL(x,\lambda^k;\rho_k)\|_x^*\le L_{k,H}\|y-x\|_x$ for all $x,y\in\Omega(\delta_f,\delta_c)\text{ with }\|y-x\|_x\le\beta$, where 
\beq \label{LkH}
L_{k,H}:=L^f_H+ \|\lambda^k\|_1 L^c_H+\rho_k m\left[(1+U^c)L_H^c+\frac{U^c_gU_H^c}{1-\beta}+\frac{(2-\beta)U_g^cU_H^c}{(1-\beta)^3}\right], \ U^c:=\sup\limits_{z\in\Omega(\delta_f,\delta_c)}\|c(z)\|.
\eeq
\item The quantities $U_{k,g}$ and $U_{k,H}$ are finite, where 
\[
U_{k,g}:=\sup_{x\in\cS(\delta_f,\delta_c)}\|\nabla_x\cL(x,\lambda^k;\rho_k)\|_{x}^*,\quad U_{k,H}:=\sup_{x\in\cS(\delta_f,\delta_c)}\|\nabla^2_{xx}\cL(x,\lambda^k;\rho_k)\|_{x}^*.
\]
Moreover, {$U_{k,g}  \leq U_g^f + \|\lambda^k\|_1U_g^c + \rho_k\sqrt{m}(2+\delta_c)U_g^c$} and $U_{k,H}  \leq U^f_H+\|\lambda^k\|_1 U^c_H+\rho_k(m(U^c_g)^2+\sqrt{m}(2 + \delta_c)U^c_H)$.
\end{enumerate}
\end{lemma}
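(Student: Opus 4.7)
The four claims split into two groups. Parts (i)--(ii) are estimates on the barrier-AL function $\cL_{\mu_k}$ itself and follow from completing-the-square manipulations combined with \eqref{ubd}, \eqref{ineq:lower-bound-penalty}, \eqref{L-xinit}, and the fact that $\|c(z_\epsilon)\| \le \epsilon/2 \le 1/2$. Parts (iii)--(iv) quantify regularity of $\cL(\cdot,\lambda^k;\rho_k)$ on $\Omega(\bdelta_f,\bdelta_c)$ and reduce to direct triangle-inequality bookkeeping based on Assumption~\ref{asp:lowbd-knownfeas}(c)--(d), together with LHSC-induced conversions between local norms at nearby points.

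For (i), assume $x$ lies in the stated sublevel set; by \eqref{L-xinit} then $\cL_{\mu_k}(x,\lambda^k;\rho_k) \le \fh$. Completing the square together with $\lambda^k\in\cB_\Lambda$ gives $(\lambda^k)^T\tilde c(x) + (\rho_k/2)\|\tilde c(x)\|^2 \ge -\Lambda^2/(2\rho_k)$, hence $f(x)+\mu_k B(x) \le \fh + \Lambda^2/(2\rho_k) \le \fh + \delta_{f,1} \le \fh + \bdelta_f$. Combining this with $f(x)+\mu_k B(x) \ge \fl - (\gamma/2)\|c(x)\|^2$ from \eqref{ineq:lower-bound-penalty} and the elementary inequality $\|c(x)\|^2 \le 2\|\tilde c(x)\|^2 + 2\|c(z_\epsilon)\|^2 \le 2\|\tilde c(x)\|^2 + 1/2$ yields the quadratic inequality $((\rho_0-2\gamma)/2)\|\tilde c(x)\|^2 - \Lambda\|\tilde c(x)\| \le \fh-\fl+\gamma$, whose positive root is exactly $\delta_{c,1}$; hence $\|c(x)\| \le \delta_{c,1} + 1/2 \le 1+\bdelta_c$, and so $x\in\cS(\bdelta_f,\bdelta_c)$. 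For (ii), rewrite
\[
\cL_{\mu_k}(x,\lambda^k;\rho_k) = \bigl[f(x)+\mu_k B(x) + (\gamma/2)\|c(x)\|^2\bigr] + \bigl[-(\gamma/2)\|c(x)\|^2 + (\lambda^k)^T\tilde c(x) + (\rho_k/2)\|\tilde c(x)\|^2\bigr],
\]
bound the first bracket below by $\fl$ via \eqref{ineq:lower-bound-penalty}, substitute $\|c(x)\|^2 \le 2\|\tilde c(x)\|^2 + 1/2$ and $(\lambda^k)^T\tilde c(x) \ge -\Lambda\|\tilde c(x)\|$ into the second, and minimize the resulting quadratic in $\|\tilde c(x)\|$ over $[0,\infty)$ to get a global lower bound of $-\Lambda^2/(2(\rho_0-2\gamma)) - \gamma/4$, which is at least $-\Lambda\bdelta_c - \gamma$ because $\Lambda/(\rho_0-2\gamma) \le \delta_{c,1} \le \bdelta_c$.

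For (iii), I would expand
\[
\nabla^2_{xx}\cL(z,\lambda^k;\rho_k) = \nabla^2 f(z) + \sum_{i=1}^m (\lambda_i^k + \rho_k\tilde c_i(z))\nabla^2 c_i(z) + \rho_k\nabla c(z)\nabla c(z)^T,
\]
and estimate $\nabla^2_{xx}\cL(y,\cdot) - \nabla^2_{xx}\cL(x,\cdot)$ term by term: (a) Hessian differences of $f$ and of the $c_i$ weighted by $\lambda_i^k + \rho_k\tilde c_i(y)$, controlled by \eqref{Hes-loc-Lip} together with $|\tilde c_i(y)| \le 1+U^c$ on $\Omega(\bdelta_f,\bdelta_c)$; (b) the cross term $\rho_k\sum_i(\tilde c_i(y)-\tilde c_i(x))\nabla^2 c_i(x)$, estimated via the mean-value bound $|c_i(y)-c_i(x)| \le (1-\beta)^{-1}U_g^c\|y-x\|_x$ obtained from the LHSC conversion $\|\cdot\|_z^* \le (1-\beta)^{-1}\|\cdot\|_x^*$ along $[x,y]$; and (c) the rank-one difference $\rho_k(\nabla c(y)\nabla c(y)^T - \nabla c(x)\nabla c(x)^T)$, split telescopically as $(\nabla c(y)-\nabla c(x))\nabla c(x)^T + \nabla c(y)(\nabla c(y)-\nabla c(x))^T$, where an additional LHSC conversion produces the $(2-\beta)/(1-\beta)^3$ factor. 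Grouping these contributions gives $L_{k,H}$ as defined in \eqref{LkH}. For (iv), on $\cS(\bdelta_f,\bdelta_c)$ we have $\|c(x)\|\le 1+\bdelta_c$, hence $\|\tilde c(x)\| \le 3/2+\bdelta_c \le 2+\bdelta_c$ and $\|\tilde c(x)\|_1 \le \sqrt{m}(2+\bdelta_c)$; applying the triangle inequality to the gradient and Hessian expressions above, together with Assumption~\ref{asp:lowbd-knownfeas}(d) and $\|\nabla c(x)\nabla c(x)^T\|_x^* \le \sum_i(\|\nabla c_i(x)\|_x^*)^2 \le m(U_g^c)^2$, produces the stated bounds on $U_{k,g}$ and $U_{k,H}$. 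The main obstacle is (iii): while the term-by-term expansion is routine, tracking the LHSC-induced norm-conversion factors carefully enough to recover the precise coefficients $1/(1-\beta)$ and $(2-\beta)/(1-\beta)^3$ appearing in \eqref{LkH} requires delicate bookkeeping.
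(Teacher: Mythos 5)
Your proposal is correct and follows essentially the same route as the paper. Parts (i)--(ii) the paper delegates to the proof of \cite[Lemma~4.1]{HL21al} (via the auxiliary Lemma~\ref{tech-1}), and the completing-the-square argument you spell out is exactly what underlies that lemma; your constant-chasing for (ii) (using $\gamma/4 \le \gamma$ and $\Lambda^2/(2(\rho_0-2\gamma)) \le \Lambda\delta_{c,1} \le \Lambda\bdelta_c$) is looser than needed but establishes the stated bound. For (iii)--(iv) your term-by-term decomposition of $\nabla^2_{xx}\cL$ and $\nabla_x\cL$, together with the local-norm Lipschitz bounds \eqref{ci-loc-Lip}--\eqref{gci-loc-Lip}, the bound $|\tilde c_i(y)| \le 1+U^c$, the rank-one split of $\nabla c_i\nabla c_i^T$, and $\|\tilde c(x)\|_1 \le \sqrt m(2+\bdelta_c)$ on $\cS(\bdelta_f,\bdelta_c)$, reproduces the paper's \eqref{ineq:c-Hes-loc}--\eqref{ineq:c-grad-loc} and the stated bounds on $U_{k,g}$, $U_{k,H}$.
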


In view of \eqref{nearly-feas-level-set} and Lemma~\ref{lem:level-set-augmented-lagrangian-func}(i) and (ii), one can see that the level set {$\{x\in\rmint\cK:\cL_{\mu_k}(x,\lambda^k;\rho_k) \le \cL_{\mu_k}(x_{\init}^k,\lambda^k;\rho_k)\}$} is bounded and {$\cL_{\mu_k}(x,\lambda^k;\rho_k)$} is bounded below for all $x\in\rmint\cK$. By these and  Lemma~\ref{lem:level-set-augmented-lagrangian-func}(iii) and (iv), one can further see that Assumption~\ref{asp:NCG-cmplxity} holds for $F(\cdot)=\cL(\cdot,\lambda^k;\rho_k)$ and $u^0=x_{\init}^k$. Based on this and the discussion in Section \ref{sec:sbpb-solver}, we can conclude that Algorithm~\ref{alg:NCG}, starting with $u^0=x^{k}_{\init}$, is applicable to the subproblem {$\min_x\cL_{\mu_k}(x,\lambda^k;\rho_k)$}. Moreover, it follows from Theorem \ref{thm:NCG-iter-oper-cmplxity} that this algorithm with {$\epsilon_g=\mu_k$ and $\epsilon_H=\sqrt{\mu_k}$} can produce a point $x^{k+1}$ satisfying \eqref{algstop:2nd-order} and also the second relation in \eqref{algstop:1st-order}. In addition, since this algorithm
is descent and its starting point is $x^{k}_{\init}$, its output $x^{k+1}$ must satisfy {$\cL_{\mu_k}(x^{k+1},\lambda^k;\rho_k) \le \cL_{\mu_k}(x^{k}_{\init},\lambda^k;\rho_k)$}, which along with \eqref{L-xinit} implies that {$\cL_{\mu_k}(x^{k+1},\lambda^k;\rho_k) \le f(z_\epsilon)+\mu_k B(z_\epsilon)$} and thus $x^{k+1}$ also satisfies the first relation in \eqref{algstop:1st-order}. 


The above discussion leads to the following conclusion concerning the well-definedness of Algorithm \ref{alg:2nd-order-AL-nonconvex}.         

\begin{theorem}[{{\bf Well-definedness of Algorithm \ref{alg:2nd-order-AL-nonconvex}}}]\label{subprob-solver}
Under the same settings as in Lemma \ref{lem:level-set-augmented-lagrangian-func}, the preconditioned Newton-CG method (Algorithm \ref{alg:NCG}), when applied to the subproblem {$\min_x \cL_{\mu_k}(x,\lambda^k;\rho_k)$} with $u^0=x^{k}_{\init}$, can find a point $x^{k+1}$ satisfying \eqref{algstop:1st-order} and \eqref{algstop:2nd-order}.
\end{theorem}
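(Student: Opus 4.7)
The plan is to reduce Theorem \ref{subprob-solver} to a direct verification that the inner subproblem $\min_x\cL_{\mu_k}(x,\lambda^k;\rho_k)$, together with the starting point $u^0=x^k_{\init}$, satisfies every hypothesis of Assumption \ref{asp:NCG-cmplxity}. Once this is done, Theorem \ref{thm:NCG-iter-oper-cmplxity} applies and delivers a point $x^{k+1}$ meeting the second relation of \eqref{algstop:1st-order} and the condition \eqref{algstop:2nd-order}; the first relation of \eqref{algstop:1st-order} then follows for free from the descent nature of Algorithm \ref{alg:NCG} combined with \eqref{L-xinit}.

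I would begin by identifying the data of the inner problem: set $F(\cdot):=\cL(\cdot,\lambda^k;\rho_k)$ and $\mu:=\mu_k$ in \eqref{b-subprob}, so that $\phi_\mu(x)=\cL_{\mu_k}(x,\lambda^k;\rho_k)$. Then I would check Assumption \ref{asp:NCG-cmplxity} piece by piece. Part (a) follows by combining Lemma \ref{lem:level-set-augmented-lagrangian-func}(ii), which gives a uniform lower bound $\phi_\mu\ge\fl-\gamma-\Lambda\bdelta_c$ on $\rmint\cK$, with Lemma \ref{lem:level-set-augmented-lagrangian-func}(i), which shows $\{x\in\rmint\cK:\phi_\mu(x)\le\phi_\mu(u^0)\}\subseteq\cS(\bdelta_f,\bdelta_c)$, together with the boundedness of $\cS(\bdelta_f,\bdelta_c)$ posited in \eqref{nearly-feas-level-set}. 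Part (b) is exactly the local Lipschitz statement in Lemma \ref{lem:level-set-augmented-lagrangian-func}(iii), applied on the open convex neighborhood $\Omega(\delta_f,\delta_c)$ of $\cS(\delta_f,\delta_c)$, with Lipschitz constant $L_{k,H}$ defined in \eqref{LkH}. Part (c) is the finiteness of $U_{k,g}$ and $U_{k,H}$ given in Lemma \ref{lem:level-set-augmented-lagrangian-func}(iv).

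Having verified Assumption \ref{asp:NCG-cmplxity}, I would invoke Theorem \ref{thm:NCG-iter-oper-cmplxity} with $\epsilon_g=\mu_k$ and $\epsilon_H=\sqrt{\mu_k}$ to conclude that Algorithm \ref{alg:NCG} terminates in finitely many iterations and outputs a point $x^{k+1}\in\rmint\cK$ satisfying $\|\nabla_x\cL_{\mu_k}(x^{k+1},\lambda^k;\rho_k)\|_{x^{k+1}}^*\le\mu_k$ deterministically and, with probability at least $1-\delta$, $\lambda_{\min}(\nabla^2 B(x^{k+1})^{-1/2}\nabla^2_{xx}\cL_{\mu_k}(x^{k+1},\lambda^k;\rho_k)\nabla^2 B(x^{k+1})^{-1/2})\ge-\sqrt{\mu_k}$. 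Because $M_{k+1}M_{k+1}^T=\nabla^2 B(x^{k+1})^{-1}$, one may write $M_{k+1}=\nabla^2 B(x^{k+1})^{-1/2}U$ for some orthogonal matrix $U$, so that the symmetric matrix $M_{k+1}^T\nabla^2_{xx}\cL_{\mu_k}(x^{k+1},\lambda^k;\rho_k)M_{k+1}$ is orthogonally similar to $\nabla^2 B(x^{k+1})^{-1/2}\nabla^2_{xx}\cL_{\mu_k}(x^{k+1},\lambda^k;\rho_k)\nabla^2 B(x^{k+1})^{-1/2}$ and hence has the same spectrum. This identification converts the gradient estimate into the second relation of \eqref{algstop:1st-order} and the eigenvalue estimate into \eqref{algstop:2nd-order}.

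Finally, I would obtain the first relation of \eqref{algstop:1st-order} from the descent property of Algorithm \ref{alg:NCG}: every accepted step strictly decreases $\phi_\mu$ (as ensured by \eqref{ls-sol} and \eqref{ls-nc}) and the algorithm starts at $u^0=x^k_{\init}$, so the output must satisfy $\cL_{\mu_k}(x^{k+1},\lambda^k;\rho_k)\le\cL_{\mu_k}(x^k_{\init},\lambda^k;\rho_k)\le f(z_\epsilon)+\mu_k B(z_\epsilon)$, the second inequality being exactly \eqref{L-xinit}. Essentially every step of this plan amounts to bookkeeping on previously proved results; the only piece requiring a moment of thought is the spectral identification between $M_{k+1}^T H M_{k+1}$ and $\nabla^2 B(x^{k+1})^{-1/2} H \nabla^2 B(x^{k+1})^{-1/2}$, which is the single point where the nonuniqueness of the factorization in \eqref{Mt} needs to be addressed.
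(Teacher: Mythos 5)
Your proposal is correct and takes essentially the same route as the paper: verify Assumption~\ref{asp:NCG-cmplxity} via Lemma~\ref{lem:level-set-augmented-lagrangian-func}, invoke Theorem~\ref{thm:NCG-iter-oper-cmplxity} with $\epsilon_g=\mu_k,\ \epsilon_H=\sqrt{\mu_k}$ for the second relation of \eqref{algstop:1st-order} and for \eqref{algstop:2nd-order}, and then use the descent property of Algorithm~\ref{alg:NCG} together with \eqref{L-xinit} for the first relation of \eqref{algstop:1st-order}. The only added detail is your explicit orthogonal-similarity argument reconciling $M_{k+1}^T H M_{k+1}$ with $\nabla^2B(x^{k+1})^{-1/2} H \nabla^2B(x^{k+1})^{-1/2}$, which is valid and makes precise a step the paper leaves implicit (Algorithm~\ref{pro:meo} in fact certifies the $M_t$-conjugated form directly, so the two versions of \eqref{apx-1st2nd-stat-bsbpb} are interchangeable exactly as you argue).
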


The following theorem characterizes the output of Algorithm \ref{alg:2nd-order-AL-nonconvex}, whose proof is deferred to Section~\ref{sec:proof-sec5}.

\begin{theorem}[{{\bf Output of Algorithm \ref{alg:2nd-order-AL-nonconvex}}}]\label{thm:output-alg1}
Suppose that Assumption~\ref{asp:lowbd-knownfeas} holds and that $\rho_0$ is sufficiently large such that $\delta_{f,1}\le\delta_f$ and $\delta_{c,1}\le\delta_c$, where $\delta_{f,1}$ and $\delta_{c,1}$ are defined in \eqref{def:delta0c-rhobar1xxx}. If Algorithm \ref{alg:2nd-order-AL-nonconvex} terminates at some iteration $k$, then its output $x^{k+1}$ is a deterministic $\epsilon$-FOSP of problem~\eqref{model:equa-cnstr}, and moreover, it is an $(\epsilon,\sqrt{\epsilon})$-SOSP of \eqref{model:equa-cnstr} with probability at least $1-\delta$.
\end{theorem}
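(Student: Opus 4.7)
The plan is to verify, at the terminating iteration $k$, each requirement of an $(\epsilon,\sqrt{\epsilon})$-SOSP from Definitions~\ref{def-FOSP} and~\ref{def-SOSP} for the pair $(x^{k+1},\tilde\lambda^{k+1})$. The termination criterion in step~\ref{algstep:stop} supplies $\mu_k\le\epsilon/(2\vartheta^{1/2}+2)$ and $\|c(x^{k+1})\|\le\epsilon$ directly; moreover, Algorithm~\ref{alg:NCG}'s step-length cap $\beta<1$ combined with Lemma~\ref{lem:ginvH} and the strict descent required in \eqref{ls-sol}--\eqref{ls-nc} forces every iterate to lie in $\rmint\cK$. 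Thus \eqref{apx-1st-1} holds, and the remaining work is to check \eqref{apx-1st-2}, \eqref{apx-1st-3}, and \eqref{apx-2nd}.

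For the first-order part, expanding $\nabla_x\cL_{\mu_k}$ and using $\tilde\lambda^{k+1}=\lambda^k+\rho_k\tilde c(x^{k+1})$, the bound in \eqref{algstop:1st-order} becomes
\[
\nabla f(x^{k+1})+\nabla c(x^{k+1})\tilde\lambda^{k+1} = -\mu_k\nabla B(x^{k+1}) + r, \qquad \|r\|^*_{x^{k+1}}\le\mu_k.
\]
The LHSC identity $\|\nabla B(x)\|^*_x=\sqrt{\vartheta}$ (from $\nabla^2 B(x)x=-\nabla B(x)$ and $\langle x,\nabla B(x)\rangle=-\vartheta$) plus the triangle inequality give $\|\nabla f(x^{k+1})+\nabla c(x^{k+1})\tilde\lambda^{k+1}\|^*_{x^{k+1}}\le\mu_k(1+\sqrt{\vartheta})\le\epsilon/2$, which is \eqref{apx-1st-3}. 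For the dual cone membership \eqref{apx-1st-2}, I would invoke the classical LHSC fact (cf.\ \cite[Proposition~1]{HL21bar}) that $-\mu\nabla B(x)+r\in\cK^*$ whenever $x\in\rmint\cK$ and $\|r\|^*_x\le\mu$, applied with $\mu=\mu_k$.

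For the second-order part, the key observation is that $M_{k+1}M_{k+1}^T=\nabla^2 B(x^{k+1})^{-1}$ forces $M_{k+1}=\nabla^2 B(x^{k+1})^{-1/2}Q$ for some orthogonal $Q$, so for any symmetric $A$ one has $\lambda_{\min}(M_{k+1}^T A M_{k+1})=\lambda_{\min}(\nabla^2 B(x^{k+1})^{-1/2}A\nabla^2 B(x^{k+1})^{-1/2})$. Expanding
\[
\nabla^2_{xx}\cL_{\mu_k}(x^{k+1},\lambda^k;\rho_k)=\nabla^2 f(x^{k+1})+\sum_{i=1}^m\tilde\lambda_i^{k+1}\nabla^2 c_i(x^{k+1})+\mu_k\nabla^2 B(x^{k+1})+\rho_k\nabla c(x^{k+1})\nabla c(x^{k+1})^T
\]
and conjugating by $\nabla^2 B(x^{k+1})^{-1/2}$, the $\mu_k$-term becomes $\mu_k I$ and, for $d\in\cC(x^{k+1})$, the $\rho_k$-quadratic form vanishes by the definition \eqref{c-cone}. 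Combining with \eqref{algstop:2nd-order} yields, with probability at least $1-\delta$,
\[
d^T\nabla^2 B(x^{k+1})^{-1/2}\!\left(\nabla^2 f(x^{k+1})+\sum_{i=1}^m\tilde\lambda_i^{k+1}\nabla^2 c_i(x^{k+1})\right)\!\nabla^2 B(x^{k+1})^{-1/2}d\ge -(\sqrt{\mu_k}+\mu_k)\|d\|^2.
\]
Since $\mu_k\le\epsilon/(2\sqrt{\vartheta}+2)\le\epsilon/4$ and $\epsilon<1$, one checks $\sqrt{\mu_k}+\mu_k\le\sqrt{\epsilon}/2+\sqrt{\epsilon}/4\le\sqrt{\epsilon}$, giving \eqref{apx-2nd}.

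The chief subtlety is the dual-cone inclusion \eqref{apx-1st-2}: while the gradient-norm part is a one-line triangle-inequality argument, membership in $\cK^*$ rests on the non-trivial LHSC property that the dual local-norm ball of radius $\mu$ centered at $-\mu\nabla B(x)$ sits inside $\cK^*$. Once that is granted, all remaining steps are pure algebra flowing from the orthogonal similarity between $M_{k+1}^T(\cdot)M_{k+1}$ and $\nabla^2 B(x^{k+1})^{-1/2}(\cdot)\nabla^2 B(x^{k+1})^{-1/2}$ together with the cancellation $\nabla^2 B(x^{k+1})^{-1/2}\nabla^2 B(x^{k+1})\nabla^2 B(x^{k+1})^{-1/2}=I$, which cleanly separates the barrier Hessian from the Lagrangian piece.
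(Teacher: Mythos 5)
Your proposal is correct and follows essentially the same route as the paper: termination gives the barrier-parameter and feasibility bounds, the first-order bound in \eqref{algstop:1st-order} is split into a $-\mu_k\nabla B$ piece plus a small residual to obtain both the dual-cone inclusion (via the LHSC fact that the unit dual-norm ball around $-\nabla B(x)$ sits in $\cK^*$, Lemma~\ref{lem:ppty-bar}(v)) and the $\|\cdot\|^*_x$-norm bound, and the second-order condition follows by conjugating $\nabla^2_{xx}\cL_{\mu_k}$ by $\nabla^2 B^{-1/2}$ so that the $\rho_k$-term vanishes on $\cC(x^{k+1})$ and the $\mu_k\nabla^2 B$-term contributes $\mu_k I$. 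Your observation that $M_{k+1}=\nabla^2 B(x^{k+1})^{-1/2}Q$ for an orthogonal $Q$ is just an equivalent packaging of the paper's direct substitution $\hat d=M_{k+1}^{-1}\nabla^2 B(x^{k+1})^{-1/2}d$ with $\|\hat d\|=\|d\|$, and the final numeric check $\sqrt{\mu_k}+\mu_k\le\sqrt{\epsilon}$ matches the paper's (which uses $\mu_k\le\sqrt{\mu_k}$ and $\vartheta\ge1$).
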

	
\begin{remark}\label{only-1st}
As seen from Theorem~\ref{thm:output-alg1}, the output of Algorithm~\ref{alg:2nd-order-AL-nonconvex} is a stochastic $(\epsilon,\sqrt{\epsilon})$-SOSP of problem~\eqref{model:equa-cnstr}. 
On the other hand, this algorithm can be easily modified to find other approximate solutions of \eqref{model:equa-cnstr} as well. For example, if only an $\epsilon$-FOSP of \eqref{model:equa-cnstr} is to be sought, one can remove the condition \eqref{algstop:2nd-order} from Algorithm~\ref{alg:2nd-order-AL-nonconvex}. In addition, if one aims to find a deterministic $(\epsilon,\sqrt{\epsilon})$-SOSP of \eqref{model:equa-cnstr}, 
one can replace the condition \eqref{algstop:2nd-order} and Algorithm~\ref{alg:NCG} by {$\lambda_{\min}(M_{k+1}^T\nabla^2_{xx}\mathcal{L}_{\mu_k}(x^{k+1},\lambda^k;\rho_k)M_{k+1})\ge-\sqrt{\mu_k}$} and a deterministic counterpart, respectively.
\end{remark}

\subsection{Outer iteration complexity of Algorithm \ref{alg:2nd-order-AL-nonconvex}}\label{sbsc:outiter-bal}

In this subsection we establish \emph{outer iteration complexity} of Algorithm \ref{alg:2nd-order-AL-nonconvex}, which measures the number of its outer iterations. Notice that {$\mu_k$} can be rewritten as
\begin{equation}\label{omega-tolerance}
{
\mu_k=\max\{\epsilon,\omega^k\}/(2\vartheta^{1/2}+2)\; \text{with}\ \omega:={r}^{\log\epsilon/\log 2},\quad \forall k\ge 0,
}
\end{equation}
where $r$ is an input of Algorithm~\ref{alg:2nd-order-AL-nonconvex}. By {$\epsilon\in(0,1)$} and $r>1$, one has $\omega\in(0,1)$. For notational convenience, we introduce the following quantity that will be frequently used later:
\begin{equation}
{
K_{\epsilon}:=\left\lceil\min\{k\ge 0: \omega^{k}\le\epsilon\}\right\rceil. \label{T-epsilon-g}}
\end{equation}
In view of this and \eqref{omega-tolerance}, we obtain that {$\mu_k=\epsilon/(2\vartheta^{1/2}+2)$} for all $k\ge K_{\epsilon}$. This along with the termination criterion of Algorithm \ref{alg:2nd-order-AL-nonconvex} implies that it runs for at least $K_{\epsilon}$ iterations and  terminates once $\|c(x^{k+1})\|\le\epsilon$ for some $k\ge K_{\epsilon}$. Consequently, to establish outer iteration complexity of Algorithm \ref{alg:2nd-order-AL-nonconvex}, it suffices to bound such $k$. The resulting outer iteration complexity is presented below, whose proof is deferred to Section~\ref{sec:proof-sec5}.

\begin{theorem}[{{\bf Outer iteration complexity of Algorithm \ref{alg:2nd-order-AL-nonconvex}}}]\label{thm:out-itr-cmplxity-1}
Suppose that Assumption~\ref{asp:lowbd-knownfeas} holds and that $\rho_0$ is sufficiently large such that $\delta_{f,1}\le\delta_f$ and $\delta_{c,1}\le\delta_c$, where $\delta_{f,1}$ and $\delta_{c,1}$ are defined in \eqref{def:delta0c-rhobar1xxx}. Let
\begin{eqnarray}
&&{\rho}_{\epsilon,1}:=\max\left\{8(\fh-\fl+\gamma)\epsilon^{-2}+4\Lambda \epsilon^{-1}+2\gamma,2\rho_0\right\},\label{def:delta0c-rhobar1}\\
&&\overline{K}_{\epsilon}:=\inf\{k\ge K_{\epsilon}: \|c(x^{k+1})\|\le\epsilon\},\label{number-outer-iteration}
\end{eqnarray}
where $K_{\epsilon}$ is defined in \eqref{T-epsilon-g},  and $\gamma$, $\fh$ and $\fl$ are given in Assumption \ref{asp:lowbd-knownfeas}. Then $\overline{K}_{\epsilon}$ is finite, and Algorithm~\ref{alg:2nd-order-AL-nonconvex} terminates at iteration $\overline{K}_{\epsilon}$ with
\begin{equation}\label{outer-iteration-cmplxty}
\overline{K}_{\epsilon}\le\left(\frac{\log({\rho}_{\epsilon,1}\rho_0^{-1})}{\log{r}}+1\right)\left(\left|\frac{\log(\epsilon(2\delta_{c,1})^{-1})}{\log \alpha}\right|+2\right)+1.
\end{equation}
Moreover, $\rho_k\le{r}{\rho}_{\epsilon,1}$ holds for $0 \le k \le \overline{K}_{\epsilon}$.
\end{theorem}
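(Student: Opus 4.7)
The plan is to prove the result by combining two ingredients: a uniform upper bound on the penalty parameter $\rho_k$, and a geometric contraction of the perturbed constraint violation $\|\tilde c(x^k)\|$ during iterations in which $\rho$ is not updated.

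\textbf{Step 1 (a priori bound on $\|\tilde c(x^{k+1})\|$).} I would start from the descent inequality $\cL_{\mu_k}(x^{k+1},\lambda^k;\rho_k) \le f(z_\epsilon) + \mu_k B(z_\epsilon) \le \fh$, which is guaranteed by the first condition in \eqref{algstop:1st-order} together with \eqref{ubd}. Combining this with the lower bound \eqref{ineq:lower-bound-penalty} applied to $(x^{k+1},\mu_k)$, the identity $c(x)=\tilde c(x)+c(z_\epsilon)$, the fact $\|c(z_\epsilon)\|\le\epsilon/2\le 1$, and the safeguarded multiplier bound $\|\lambda^k\|\le \Lambda$ from step~\ref{algstep:proj-multiplier}, I would derive a quadratic inequality of the form
\[
\tfrac{1}{2}(\rho_k-2\gamma)\,\|\tilde c(x^{k+1})\|^2 \;-\; \Lambda\,\|\tilde c(x^{k+1})\| \;-\; (\fh-\fl+\gamma) \;\le\; 0,
\]
valid whenever $\rho_k>2\gamma$. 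Solving this quadratic gives $\|\tilde c(x^{k+1})\|\le \delta_{c,k+1}$, where $\delta_{c,k+1}$ is obtained from the formula \eqref{def:delta0c-rhobar1xxx} for $\delta_{c,1}$ by replacing $\rho_0$ with $\rho_k$. Direct computation then shows that once $\rho_k\ge \rho_{\epsilon,1}$, one has $\|\tilde c(x^{k+1})\|\le\epsilon/2$.

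\textbf{Step 2 (uniform bound $\rho_k\le r\rho_{\epsilon,1}$).} Because $\mu_k = \epsilon/(2\vartheta^{1/2}+2)$ for every $k\ge K_\epsilon$, the $\mu$-part of the termination test in step~\ref{algstep:stop} is automatically satisfied there. Thus if $\rho_k>\rho_{\epsilon,1}$ for some $k\ge K_\epsilon$, Step 1 together with $\|c(x^{k+1})\|\le\|\tilde c(x^{k+1})\|+\|c(z_\epsilon)\|\le \epsilon/2+\epsilon/2=\epsilon$ triggers termination before any further penalty increase. Consequently, the rule in step~\ref{altstep:penalty} can inflate $\rho$ at most once beyond the threshold $\rho_{\epsilon,1}$, yielding $\rho_k\le r\rho_{\epsilon,1}$ throughout $0\le k\le \overline{K}_\epsilon$. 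The factor $r$ is precisely the buffer that absorbs a single extra multiplicative update that may occur just as $\rho$ crosses $\rho_{\epsilon,1}$.

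\textbf{Step 3 (counting penalty updates and iterations between them).} Let $\cJ_{\rm inc}:=\{k\ge 0:\rho_{k+1}=r\rho_k\}$. From $\rho_{k+1}=r^{|\cJ_{\rm inc}\cap[0,k]|}\rho_0$ and the bound in Step 2, $|\cJ_{\rm inc}|\le \lfloor\log(\rho_{\epsilon,1}/\rho_0)/\log r\rfloor+1$. On any maximal block of consecutive indices $\{k_0+1,\ldots,k_0+N\}$ on which $\rho$ is not updated, step~\ref{altstep:penalty} enforces $\|\tilde c(x^{j+1})\|\le \alpha\|\tilde c(x^j)\|$, so $\|\tilde c(x^{k_0+N+1})\|\le \alpha^N\|\tilde c(x^{k_0+1})\|\le\alpha^N\delta_{c,1}$, where the last inequality follows because $\rho_{k_0}\ge\rho_0$ and $\rho\mapsto\delta_{c,\cdot}$ is non-increasing. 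The termination inequality $\|\tilde c\|\le\epsilon/2$ is thus achieved as soon as $N\ge |\log(\epsilon/(2\delta_{c,1}))/\log\alpha|+1$. Multiplying the bound on $|\cJ_{\rm inc}|$ by the maximal block length and adding a constant to account for the initial iteration and for the requirement $k\ge K_\epsilon$ (finite since $K_\epsilon=\lceil\log 2/\log r\rceil$) yields \eqref{outer-iteration-cmplxty}. Finiteness of $\overline{K}_\epsilon$ is immediate.

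The main obstacle is Step 2: the delicate part is certifying that once $\rho_k$ passes $\rho_{\epsilon,1}$, the quadratic bound from Step 1 actually drives $\|\tilde c(x^{k+1})\|$ below $\epsilon/2$ (requiring a careful computation with the specific constants in \eqref{def:delta0c-rhobar1}) and that no additional update can slip through after termination should have triggered; everything else is essentially bookkeeping.
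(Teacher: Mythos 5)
Your proposal is correct and follows essentially the same route as the paper's proof: you derive the same quadratic-inequality bound on $\|\tilde c(x^{k+1})\|$ (the content of Lemma~\ref{tech-1}(ii) and (iv)), use the same contrapositive argument to cap $\rho_k$ by $r\rho_{\epsilon,1}$, and finish with the same counting of penalty increases and geometric decay within blocks of constant $\rho$. The only cosmetic difference is that the paper establishes finiteness of $\overline{K}_\epsilon$ first via a short contradiction argument (if $\overline{K}_\epsilon=\infty$ then $\rho_k\to\infty$ forces $\|\tilde c(x^{k+1})\|\to0$), whereas you fold finiteness into the counting bound — which works, but should be stated as a consequence of the block-length bound rather than declared ``immediate,'' since the statement of your Step 2 is phrased in terms of $\overline{K}_\epsilon$ and a careless reader could see a circularity; the fix (state the dichotomy ``for $k\ge K_\epsilon$, either terminate or $\rho_k<\rho_{\epsilon,1}$'' without reference to $\overline{K}_\epsilon$) is what you in effect already have.
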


\begin{remark}[{{\bf Upper bounds for $\overline{K}_{\epsilon}$ and $\{\rho_k\}$}}]\label{order-out-itera-penalty}
As seen from Theorem \ref{thm:out-itr-cmplxity-1}, the number of outer iterations of Algorithm \ref{alg:2nd-order-AL-nonconvex} for finding a stochastic $(\epsilon,\sqrt{\epsilon})$-SOSP of problem~\eqref{model:equa-cnstr} is at most of $\cO(|\log\epsilon|^2)$. In addition, the penalty parameters $\{\rho_k\}$ generated by this algorithm are at most of $\cO(\epsilon^{-2})$.
\end{remark}


\subsection{Total inner iteration and operation complexity of Algorithm \ref{alg:2nd-order-AL-nonconvex}}\label{subsec:total-complexity}\label{sbsc:inner-bal}

In this subsection we present the \emph{total inner iteration and operation complexity} of Algorithm \ref{alg:2nd-order-AL-nonconvex}, which measures the total number of iterations and fundamental operations performed by Algorithm~\ref{alg:NCG} in Algorithm~\ref{alg:2nd-order-AL-nonconvex}. Its proof is deferred to Section~\ref{sec:proof-sec5}.


\begin{theorem}[{\bf Total inner iteration and operation complexity of Algorithm \ref{alg:2nd-order-AL-nonconvex}}]\label{thm:total-iter-cmplxity}
Suppose that Assumption~\ref{asp:lowbd-knownfeas} holds and that $\rho_0$ is sufficiently large
such that $\delta_{f,1} \le \bdelta_f$ and  $\delta_{c,1} \le \bdelta_c$, where $\delta_{f,1}$ and $\delta_{c,1}$ are defined in \eqref{def:delta0c-rhobar1xxx}. Then the following statements hold.
\begin{enumerate}[{\rm (i)}]
\item The total number of inner iterations of Algorithm \ref{alg:2nd-order-AL-nonconvex}, namely, the total number of iterations of Algorithm~\ref{alg:NCG} performed in Algorithm \ref{alg:2nd-order-AL-nonconvex}, is at most $\widetilde{\cO}(\epsilon^{-11/2})$. If $c$ is further assumed to be affine, it is at most $\widetilde{\cO}(\epsilon^{-3/2})$.
\item The total numbers of {Cholesky factorizations and other fundamental operations} consisting of the Hessian-vector products of $f$ and $c$ and backward or forward substitutions to a triangular linear system
required by Algorithm~\ref{alg:NCG} in Algorithm~\ref{alg:2nd-order-AL-nonconvex} are at most $\widetilde{\cO}(\epsilon^{-11/2})$ and $\widetilde{\cO}(\epsilon^{-11/2}\min\{n,\epsilon^{-5/4}\})$, respectively. If $c$ is further assumed to be affine, they are at most $\widetilde{\cO}(\epsilon^{-3/2})$ and $\widetilde{\cO}(\epsilon^{-3/2}\min\{n,\epsilon^{-5/4}\})$, respectively.
\end{enumerate}
\end{theorem}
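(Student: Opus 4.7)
The strategy is to apply Theorem~\ref{thm:NCG-iter-oper-cmplxity} to each inner subproblem solved by Algorithm~\ref{alg:NCG} at outer iteration $k$ with the identifications $F(\cdot)=\cL(\cdot,\lambda^k;\rho_k)$, $\mu=\mu_k$, $\epsilon_g=\mu_k$, $\epsilon_H=\sqrt{\mu_k}$, and then to sum over the outer iterations using the bounds already furnished by Theorem~\ref{thm:out-itr-cmplxity-1}.

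First I would observe that $\mu_k\ge \epsilon/(2\vartheta^{1/2}+2)$ for all $k$ by the definition in Algorithm~\ref{alg:2nd-order-AL-nonconvex}, so
\[
\max\{\epsilon_g^{-2}\epsilon_H,\ \epsilon_H^{-3}\}=\mu_k^{-3/2}=\cO(\epsilon^{-3/2}).
\]
Next I would use Lemma~\ref{lem:level-set-augmented-lagrangian-func} to bound the problem-dependent constants appearing in Theorem~\ref{thm:NCG-iter-oper-cmplxity}. Specifically, since $\lambda^k\in\mathcal{B}_\Lambda$ and Theorem~\ref{thm:out-itr-cmplxity-1} gives $\rho_k\le r{\rho}_{\epsilon,1}=\cO(\epsilon^{-2})$, Lemma~\ref{lem:level-set-augmented-lagrangian-func}(iii)-(iv) yields $L_{k,H}=\cO(\rho_k)=\cO(\epsilon^{-2})$ and $U_{k,H}=\cO(\rho_k)=\cO(\epsilon^{-2})$ in the general case. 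In the affine case $\nabla^2c_i\equiv0$, so $L_H^c=U_H^c=0$ and the formula for $L_{k,H}$ in \eqref{LkH} collapses to $L_{k,H}=L_H^f=\cO(1)$; however the $\rho_km(U_g^c)^2$ term in the bound for $U_{k,H}$ does not vanish, so $U_{k,H}=\cO(\rho_k)=\cO(\epsilon^{-2})$ still holds. For the descent range, combining \eqref{L-xinit} with Lemma~\ref{lem:level-set-augmented-lagrangian-func}(ii) gives a uniform bound
\[
\phi_{\hi,k}-\phi_{\low,k}\le \fh - (\fl-\gamma-\Lambda\bdelta_c) = \cO(1).
\]

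Plugging these into the inner iteration bound \eqref{NCG-iter} of Theorem~\ref{thm:NCG-iter-oper-cmplxity}, the inner iterations performed at outer iteration $k$ are
\[
\cO\bigl((\phi_{\hi,k}-\phi_{\low,k})L_{k,H}^2\,\mu_k^{-3/2}\bigr)=\cO(\epsilon^{-4}\cdot\epsilon^{-3/2})=\cO(\epsilon^{-11/2})
\]
in general, and $\cO(\epsilon^{-3/2})$ in the affine case. Multiplying by the outer iteration count $\overline{K}_\epsilon=\cO(|\log\epsilon|^2)$ from Theorem~\ref{thm:out-itr-cmplxity-1} gives total inner iteration counts of $\widetilde{\cO}(\epsilon^{-11/2})$ and $\widetilde{\cO}(\epsilon^{-3/2})$, respectively. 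For the operation complexity, Theorem~\ref{thm:NCG-iter-oper-cmplxity}(iv) says that each inner iteration requires one Cholesky factorization and $\widetilde{\cO}(\min\{n,(U_{k,H}/\epsilon_H)^{1/2}\})$ other fundamental operations. Since $U_{k,H}=\cO(\epsilon^{-2})$ and $\epsilon_H=\cO(\sqrt{\epsilon})$, we get $(U_{k,H}/\epsilon_H)^{1/2}=\cO(\epsilon^{-5/4})$ in both cases, and multiplying the per-iteration cost by the total inner iteration counts gives the stated operation bounds.

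The argument is mostly bookkeeping, so there is no single sharp obstacle; the main point requiring care is making sure the growth of $\rho_k$ (which enters $L_{k,H}$, $U_{k,H}$, and hence the Lipschitz and spectrum constants) is absorbed correctly and distinguishing affine from nonaffine $c$ in the Lipschitz bound while noting that $U_{k,H}$ retains its $\rho_k$ dependence through the $\rho_k m(U_g^c)^2$ term even in the affine case, so that the $\epsilon^{-5/4}$ factor persists in the operation complexity.
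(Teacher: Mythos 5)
Your proposal is correct and follows essentially the same route as the paper's proof: apply Theorem~\ref{thm:NCG-iter-oper-cmplxity} per outer iteration with $F(\cdot)=\cL(\cdot,\lambda^k;\rho_k)$, bound the descent range by $\fh-(\fl-\gamma-\Lambda\bdelta_c)=\cO(1)$ via \eqref{L-xinit} and Lemma~\ref{lem:level-set-augmented-lagrangian-func}(ii), use $\rho_k=\cO(\epsilon^{-2})$ from Theorem~\ref{thm:out-itr-cmplxity-1} and $\mu_k\ge\epsilon/(2\vartheta^{1/2}+2)$ to bound $L_{k,H}$, $U_{k,H}$, and the tolerance factors, and then multiply by the outer iteration count $\overline{K}_\epsilon=\cO(|\log\epsilon|^2)$. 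You also correctly flag the one place where care is needed — that in the affine case $L_{k,H}=L_H^f=\cO(1)$ while $U_{k,H}$ retains its $\rho_k m(U_g^c)^2$ dependence, so the $\epsilon^{-5/4}$ factor survives in the operation bound — which is exactly the distinction the paper makes.
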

	
\begin{remark}
\begin{enumerate}[{\rm (i)}]
\item It is worth mentioning that the above complexity results are established without assuming any constraint qualification. Moreover, when $\cK$ is the nonnegative orthant, these results match the best-known ones achieved by a Newton-CG based AL method \cite{HL21al} for nonconvex equality constrained optimization without imposing a constraint qualification.
\item { For the unconstrained case, where $\cK = \bR^n$ and $c \equiv 0$, we can eliminate the barrier function for handling $\cK$ and the AL function for handling $c$ from Algorithm~\ref{alg:2nd-order-AL-nonconvex}. Consequently, Algorithm~\ref{alg:2nd-order-AL-nonconvex} reduces to the Newton-CG method presented in \cite[Algorithm 1]{HL21al} and \cite[Algorithm 3]{ROW20}. The corresponding operation complexity bound can be improved to $\widetilde{\cO}(\epsilon^{-3/2} \min{n, \epsilon^{-1/4}})$, which matches the results provided in \cite[Theorem 3.1]{HL21al} and \cite[Theorem 4 and Corollary 2]{ROW20} for unconstrained optimization.}
\end{enumerate}
\end{remark}

\subsection{Enhanced complexity of Algorithm \ref{alg:2nd-order-AL-nonconvex} under constraint qualification}\label{sec:AL-modified}

In this subsection we study complexity of Algorithm~\ref{alg:2nd-order-AL-nonconvex} under one additional assumption that a generalized linear independence constraint qualification (GLICQ) holds for problem \eqref{model:equa-cnstr}, which is introduced below. In particular, under GLICQ we will obtain an enhanced total inner iteration complexity of $\widetilde{\cO}(\epsilon^{-7/2})$ and an enhanced operation complexity of $\widetilde{\cO}(\epsilon^{-7/2}\min\{n,\epsilon^{-3/4}\})$ for Algorithm~\ref{alg:2nd-order-AL-nonconvex} when the equality constraints in problem~\eqref{model:equa-cnstr} are nonlinear, which are significantly better than the ones in Theorem \ref{thm:total-iter-cmplxity}. We now introduce the GLICQ assumption for \eqref{model:equa-cnstr}.

  

\begin{assumption}[{{\bf GLICQ}}]\label{asp:LICQ}
There exists some $\sigma>0$ such that
\begin{equation}\label{sigma}
\lambda_{\min}(\nabla c(x)^T\nabla^2 B(x)^{-1}\nabla c(x))\ge\sigma^2,\quad \forall x\in\cS(\delta_f,\delta_c),
\end{equation}
where $\cS(\bdelta_f,\bdelta_c)$ is defined in \eqref{nearly-feas-level-set}.
\end{assumption}


The following theorem shows that under Assumption \ref{asp:LICQ}, the total inner iteration and operation complexity results presented in Theorem \ref{thm:total-iter-cmplxity} can be significantly improved, whose proof is deferred to Section~\ref{sec:proof-sec5}.

\begin{theorem}[{\bf Enhanced total inner iteration and operation complexity of Algorithm \ref{alg:2nd-order-AL-nonconvex}}]\label{thm:total-iter-cmplxity2} 
Suppose that Assumptions \ref{asp:lowbd-knownfeas} and \ref{asp:LICQ} hold and that $\rho_0$ is sufficiently large 
such that $\delta_{f,1} \le \bdelta_f$ and  $\delta_{c,1} \le \bdelta_c$, where $\delta_{f,1}$ and $\delta_{c,1}$ are defined in \eqref{def:delta0c-rhobar1xxx}. Then the following statements hold.
\begin{enumerate}[{\rm (i)}]
\item The total number of inner iterations of Algorithm \ref{alg:2nd-order-AL-nonconvex}, namely, the total number of iterations of Algorithm~\ref{alg:NCG} performed in Algorithm~\ref{alg:2nd-order-AL-nonconvex}, is at most $\widetilde{\cO}(\epsilon^{-7/2})$. If $c$ is further assumed to be affine, it is at most $\widetilde{\cO}(\epsilon^{-3/2})$.
\item The total numbers of {Cholesky factorizations and other fundamental operations}
 consisting of the Hessian-vector products of $f$ and $c$ and backward or forward substitutions to a triangular linear system
required by Algorithm~\ref{alg:NCG} in Algorithm~\ref{alg:2nd-order-AL-nonconvex} are at most $\widetilde{\cO}(\epsilon^{-7/2})$ and $\widetilde{\cO}(\epsilon^{-7/2}\min\{n,\epsilon^{-3/4}\})$, respectively. If $c$ is further assumed to be affine, they are at most $\widetilde{\cO}(\epsilon^{-3/2})$ and $\widetilde{\cO}(\epsilon^{-3/2}\min\{n,\epsilon^{-3/4}\})$, respectively.
\end{enumerate}
\end{theorem}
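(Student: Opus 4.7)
The plan is to sharpen Theorem~\ref{thm:total-iter-cmplxity} by showing that under GLICQ the penalty parameters stay in $\cO(\epsilon^{-1})$ rather than $\cO(\epsilon^{-2})$, and then to propagate this tighter bound through the per-subproblem complexity from Theorem~\ref{thm:NCG-iter-oper-cmplxity}. The entry point is a uniform bound on the standard multiplier $\tilde\lambda^{k+1}$. Using the approximate first-order condition \eqref{algstop:1st-order} together with $\tilde\lambda^{k+1}=\lambda^k+\rho_k\tilde c(x^{k+1})$, the gradient $\nabla_x\cL_{\mu_k}(x^{k+1},\lambda^k;\rho_k)$ equals $\nabla f(x^{k+1})+\mu_k\nabla B(x^{k+1})+\nabla c(x^{k+1})\tilde\lambda^{k+1}$ and has local dual norm at most $\mu_k$ at $x^{k+1}$. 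Premultiplying by $\nabla c(x^{k+1})^T\nabla^2 B(x^{k+1})^{-1}$ and invoking Assumption~\ref{asp:LICQ}, the coefficient matrix in front of $\tilde\lambda^{k+1}$ is symmetric positive definite with smallest eigenvalue at least $\sigma^2$, so the system can be solved for $\tilde\lambda^{k+1}$. The remaining terms are controlled via Cauchy--Schwarz in the local metric by $U_g^f$ and $U_g^c$ from \eqref{g-loc-upbd}, by the standard LHSC identity $\|\nabla B(x)\|_x^*\le\sqrt{\vartheta}$, and by $\mu_k\le\mu_0$, yielding $\|\tilde\lambda^{k+1}\|\le\Lambda_1$ with $\Lambda_1$ independent of $\rho_k$ and $\epsilon$.

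I would then convert this into a bound on $\rho_k$. From $\tilde\lambda^{k+1}-\lambda^k=\rho_k\tilde c(x^{k+1})$ and $\|\lambda^k\|\le\Lambda$ (step~\ref{algstep:proj-multiplier}), one gets $\|\tilde c(x^{k+1})\|\le(\Lambda_1+\Lambda)/\rho_k$, hence $\|c(x^{k+1})\|\le(\Lambda_1+\Lambda)/\rho_k+\epsilon/2$ by Assumption~\ref{asp:lowbd-knownfeas}(a). The termination criterion in step~\ref{algstep:stop} is therefore automatic once $\rho_k\ge 2(\Lambda_1+\Lambda)/\epsilon$ and $k\ge K_\epsilon$. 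Combined with the update rule in step~\ref{altstep:penalty}, where each increase multiplies $\rho_k$ by at most $r$, this yields $\rho_k\le\rho_{\epsilon,2}:=r\max\{2(\Lambda_1+\Lambda)/\epsilon,\rho_0\}=\cO(\epsilon^{-1})$ for every $0\le k\le\overline K_\epsilon$. Repeating the argument of Theorem~\ref{thm:out-itr-cmplxity-1} with $\rho_{\epsilon,2}$ in place of $\rho_{\epsilon,1}$ still gives $\overline K_\epsilon=\cO(|\log\epsilon|^2)=\widetilde\cO(1)$.

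To finish, I would apply Theorem~\ref{thm:NCG-iter-oper-cmplxity} to each AL subproblem with $\epsilon_g=\mu_k\ge\Theta(\epsilon)$ and $\epsilon_H=\sqrt{\mu_k}$, so that $\max\{\epsilon_g^{-2}\epsilon_H,\epsilon_H^{-3}\}=\cO(\epsilon^{-3/2})$. By Lemma~\ref{lem:level-set-augmented-lagrangian-func}(iii)--(iv) together with $\rho_k=\cO(\epsilon^{-1})$, both $L_{k,H}$ and $U_{k,H}$ are $\cO(\epsilon^{-1})$, producing a per-subproblem iteration count of $\widetilde\cO(\epsilon^{-2}\cdot\epsilon^{-3/2})=\widetilde\cO(\epsilon^{-7/2})$ and an operation count of $\widetilde\cO(\epsilon^{-7/2}\min\{n,(U_{k,H}/\sqrt{\mu_k})^{1/2}\})=\widetilde\cO(\epsilon^{-7/2}\min\{n,\epsilon^{-3/4}\})$. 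Multiplying by $\overline K_\epsilon=\widetilde\cO(1)$ gives the two totals claimed in (i)--(ii). For affine $c$, the factors $L_H^c=U_H^c=0$ collapse $L_{k,H}$ in \eqref{LkH} to the constant $L_H^f$, while $U_{k,H}=\cO(\rho_k)$ still scales through the $(U_g^c)^2$ term, producing $\widetilde\cO(\epsilon^{-3/2})$ inner iterations and $\widetilde\cO(\epsilon^{-3/2}\min\{n,\epsilon^{-3/4}\})$ operations.

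The hard part is the first step of producing a uniform bound on $\tilde\lambda^{k+1}$: one must execute the premultiplication-and-inversion argument inside the preconditioned local-norm geometry induced by the LHSC barrier, and verify that every quantity used to bound $\tilde\lambda^{k+1}$, in particular the $\mu_k\nabla B(x^{k+1})$ contribution, is $\cO(1)$ independently of $\rho_k$ and $\epsilon$. Once that uniform multiplier bound is in hand, all subsequent steps are routine propagations.
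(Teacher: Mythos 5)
Your proposal is correct and follows essentially the same route as the paper: under GLICQ you derive a uniform, $\epsilon$- and $\rho_k$-independent bound on the multiplier by premultiplying the approximate stationarity condition by $\nabla c(x^{k+1})^T\nabla^2 B(x^{k+1})^{-1}$ and inverting the $\sigma^2$-lower-bounded Gram matrix, which the paper executes in Lemma~\ref{lem:multiplier-update-without-proj} (there phrased as a direct bound on $\rho_k\|\tilde c(x^{k+1})\|$ rather than on $\|\tilde\lambda^{k+1}\|$—the two are equivalent via $\tilde\lambda^{k+1}=\lambda^k+\rho_k\tilde c(x^{k+1})$ and $\|\lambda^k\|\le\Lambda$). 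The resulting $\rho_k=\cO(\epsilon^{-1})$ bound is then pushed through Theorem~\ref{thm:NCG-iter-oper-cmplxity} and Lemma~\ref{lem:level-set-augmented-lagrangian-func}(iii)--(iv) exactly as in the paper's Lemma~\ref{cor:improved-outer-iteration-cmplxty} and the proof of Theorem~\ref{thm:total-iter-cmplxity2}.
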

	
\begin{remark}
As seen from Theorem~\ref{thm:total-iter-cmplxity2}, under GLICQ and some other suitable assumptions, Algorithm \ref{alg:2nd-order-AL-nonconvex} achieves significantly better complexity bounds than the ones in Theorem \ref{thm:total-iter-cmplxity} when the equality constraints in \eqref{model:equa-cnstr} are nonlinear. Moreover, when $\cK$ is the nonnegative orthant, the complexity results in Theorem~\ref{thm:total-iter-cmplxity2} match the best-known ones achieved by a Newton-CG based AL method \cite{HL21al} for nonconvex equality constrained optimization under the constraint qualification that is obtained from the above GLICQ by replacing $\nabla^2 B(x)$ by the identity matrix.
\end{remark}

\section{Numerical results}\label{sec:nr}

In this section we conduct some preliminary numerical experiments to test the performance of our Newton-CG based barrier-AL method (Algorithm~\ref{alg:2nd-order-AL-nonconvex}) for solving a low-rank matrix recovery problem, a simplex-constrained nonnegative matrix factorization problem, {and a sphere-constrained nonnegative matrix factorization problem.} In our experiments, all the algorithms are coded in Matlab and all the computations are performed on a desktop with a 3.79 GHz AMD 3900XT 12-Core processor and 32 GB of RAM.

\subsection{Low-rank matrix recovery} \label{rrr}
In this subsection we consider a low-rank matrix recovery problem (e.g., see \cite{BNS16global,CLC19,PKCS17non})
\begin{equation}\label{ms}
\min_{U\in\bR^{n\times l}}\ \left\{\frac{1}{2}\|\cA(UU^T)-y\|^2: \|U\|_F^2\le b\right\},
\end{equation}
where $\cA:\bR^{n\times n}\to\bR^m$ is a linear operator and $\|\cdot\|_F$ is the Frobenius norm. 

For each triple $(n,l,m)$, we randomly generate 10 instances of problem~\eqref{ms} in a similar manner as described in \cite{BNS16global}. In particular, we first randomly generate a linear operator $\cA$ by setting $\cA(\cdot)=A(\mathrm{vec}(\cdot))$, where  $A$ is an $m\times n^2$ matrix with all entries chosen from the normal distribution {with mean zero and standard deviation $1/\sqrt{m}$}, and $\mathrm{vec}(\cdot)$ is the vectorization of the associated matrix.\footnote{The vectorization of a matrix is the column vector obtained by stacking the columns of the matrix on top of one another.} Then we randomly generate the ground-truth low-rank matrix $X^*=\widetilde{U}\widetilde{U}^T$ with all entries of $\widetilde{U}$ chosen from the standard normal distribution. We finally set {$b=\|\widetilde{U}\|^2_F$} and $y = \cA(X^*)+e$, where $e_i$, $1\le i\le m$, is generated according to the normal distribution with mean zero and standard deviation $0.01$. 

Observe that problem \eqref{ms} is equivalent to
\beq \label{ms-reform}
\min_{U, s}\ \left\{\frac{1}{2}\|\cA(UU^T)-y\|^2: \|U\|_F^2+s= b,s\ge 0\right\}.
\eeq 
In this experiment, we apply Algorithm~\ref{alg:2nd-order-AL-nonconvex} to find a $(10^{-4},10^{-2})$-SOSP of \eqref{ms-reform} and hence of \eqref{ms}. To ensure that the output of Algorithm~\ref{alg:2nd-order-AL-nonconvex} is a deterministic approximate second-order stationary point, we use a minimum eigenvalue oracle that returns a deterministic output in Algorithm~\ref{alg:2nd-order-AL-nonconvex} instead, which calls the Matlab subroutine \textsf{[v,$\lambda$] = eigs(H,1,'smallestreal')} to find the minimum eigenvalue $\lambda$ and its associated unit eigenvector $v$ of a real symmetric matrix $H$. Besides, we apply \cite[Algorithm SpaRSA]{WNF09sparse}, which is a nonmonotone proximal gradient method, to find a $10^{-4}$-FOSP of \eqref{ms} by generating a sequence $\{U^t\}$ according to 
\[
U^t = \arg\min_U  \{\|U-U^{t-1}+\nabla f(U^{t-1})/\alpha_{t-1}\|_F: \|U\|_F^2\le b\},
\]
 where $f$ is the objective function of \eqref{ms} and $\alpha_{t-1}$ is chosen by a backtracking line search scheme such that 
$f(U^t) \leq \max_{[t-M-1]_+ \leq i \leq t-1} f(U^i) - \sigma \alpha_{t-1}\|U^t-U^{t-1}\|^2_F/2$ for some $\sigma\in (0,1)$ and a positive integer $M$ (see \cite{WNF09sparse} for details). We terminate  SpaRSA  once the condition
\[
\|\alpha_{t-1}(U^{t}-U^{t-1})+\nabla f(U^{t-1})-\nabla f(U^t)\|_F\le10^{-4}
\]
is met. It can be verified that such $U^t$ is a $10^{-4}$-FOSP of \eqref{ms}. We choose the initial point $U^0$ with all entries equal {to} $\sqrt{b/(2nl)}$ for both methods, $s^0=b/2$ for Algorithm~\ref{alg:2nd-order-AL-nonconvex}, and set  
\begin{itemize}
\item $(\Lambda,\rho_0,\lambda^0,\alpha,r)= (10^3,10^2,0,0.25,1.5)$ for Algorithm~\ref{alg:2nd-order-AL-nonconvex}, and $(\theta,\zeta,\eta,\beta)=(0.5,0.5,0.01,0.9)$ for Algorithm~\ref{alg:NCG};
\item $(\sigma,M,\alpha_{\min},\alpha_{\max},\eta)=(0.01,5,10^{-30}, 10^{30},2)$ for SpaRSA \cite{WNF09sparse}.
\end{itemize}

\begin{table}
\centering
\begin{tabular}{ccc||ll||ll}
\hline
& & &\multicolumn{2}{c||}{Relative error} & \multicolumn{2}{c}{Objective value} \\
$n$ & $l$ & $m$ & Algorithm 2 & SpaRSA & Algorithm 2 & SpaRSA  \\ \hline
20 & 1 & 40 & 6.3$\times10^{-4}$ &  6.3$\times10^{-4}$ & 9.9$\times10^{-4}$ &  9.8$\times10^{-4}$\\ 
20 & 2 & 80 & 3.3$\times10^{-4}$ &  0.60 & 2.0$\times10^{-3}$& 7.8$\times10^{3}$\\
40 & 2 & 160 &1.7$\times10^{-4}$& 0.66 & 4.2$\times10^{-3}$ & 7.1$\times10^{4}$ \\ 
40 & 4 & 320 & 1.2$\times$10$^{-4}$ & 0.81 & 8.0$\times10^{-3}$& 5.5$\times10^{5}$\\ 
60 & 3 & 360 &9.2$\times$10$^{-5}$ & 0.78 & 9.3$\times10^{-3}$& 8.4$\times10^{5}$\\ 
60 & 6 & 720 & 6.3$\times 10^{-5}$  &  0.85  & 1.9$\times10^{-2}$& 5.1$\times10^{6}$\\ 
80 & 4 & 640 & 5.8$\times 10^{-5}$ &0.83& 1.6$\times10^{-2}$& 4.4$\times10^{6}$ \\ 
80 & 8 & 1280 & 3.9$\times 10^{-5}$ & 0.90  & 3.3$\times10^{-2}$&2.5$\times10^{7}$ \\
100 & 5& 1000  &4.2$\times 10^{-5}$ & 0.89   & 2.6$\times10^{-2}$&1.6$\times10^{7}$ \\ 
100 & 10& 2000 &2.8$\times 10^{-5}$&  0.92 & 5.2$\times10^{-2}$ &8.1$\times10^{7}$ \\
\hline 
\end{tabular}
\caption{Numerical results for problem~\eqref{ms}}
\label{table:ms}
\end{table}

\begin{figure}[!ht]
\centering
\includegraphics[width=0.48\linewidth]{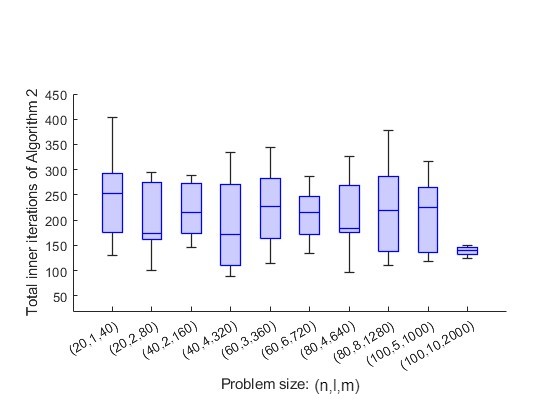}
\includegraphics[width=0.48\linewidth]{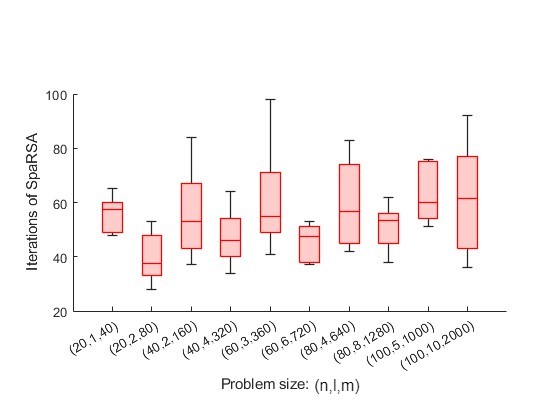}
\caption{{Left: The total number of inner iterations of Algorithm~\ref{alg:2nd-order-AL-nonconvex} for finding a $(10^{-4},10^{-2})$-SOSP of \eqref{ms-reform} for each problem size over 10 random instances. Right: The number of iterations of SpaRSA for finding a $10^{-4}$-FOSP of \eqref{ms} for each problem size over 10 random instances.}}
\label{fig:tt-ms}
\end{figure}
\begin{figure}[!ht]
\centering
\includegraphics[width=0.48\linewidth]{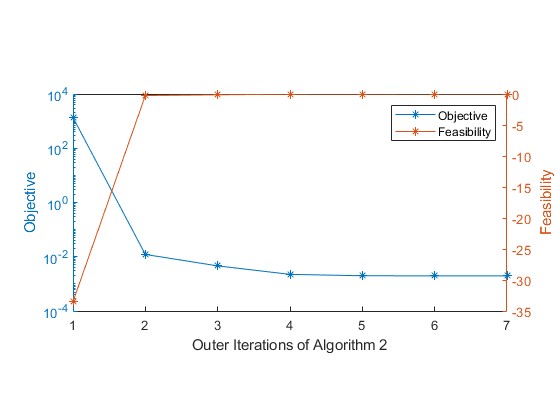}
\includegraphics[width=0.48\linewidth]{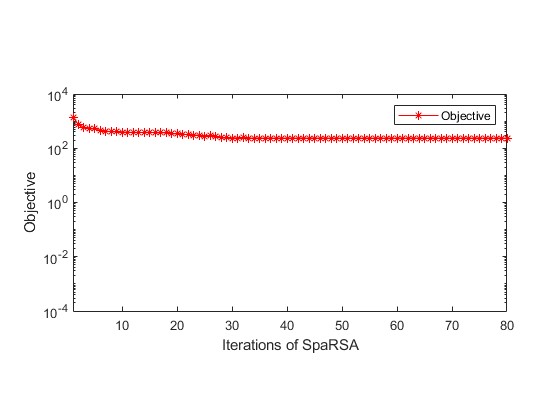}
\caption{{Numerical results of Algorithm~\ref{alg:2nd-order-AL-nonconvex} and SpaRSA on a single random instance of problem~\eqref{ms} with $(n,l,m)=(20,2,80)$. These two figures illustrate the convergence behavior of both methods in terms of objective value $\frac{1}{2}\|\cA(U^t(U^t)^T)-y\|^2$ and feasibility $[\|U^t\|_F^2-b]_+$.}}
\label{fig:ms-beh}
\end{figure}

Notice that the approximate solution obtained by SpaRSA must be a feasible point of \eqref{ms}, while the one found by Algorithm~\ref{alg:2nd-order-AL-nonconvex} may not be a feasible point of \eqref{ms}. For a fair comparison, we project the latter one into the feasible region of  \eqref{ms} to obtain a feasible approximate solution.  Then we compare the quality of these feasible approximate solutions in terms of objective value and relative error defined as $\|UU^T-X^*\|_F/\|X^*\|_F$ for a given $U$. The computational results of Algorithm~\ref{alg:2nd-order-AL-nonconvex} and SpaRSA for the instances randomly generated above are presented in Table~\ref{table:ms}. In detail, the values of $n$, $l$ and $m$ are listed in the first three columns,  respectively. For each triple $(n,l,m)$, the average relative error and the average objective value of the feasible approximate solutions found by each method over 10 random instances are given in the rest columns. 
One can observe that the approximate SOSP found by Algorithm~\ref{alg:2nd-order-AL-nonconvex} has significantly lower relative error and objective value than the approximate FOSP obtained by SpaRSA, {except for the instances with $(n,l,m)=(20,1,40)$.}

For each triple $(n, l, m)$, we use box charts in Figure~\ref{fig:tt-ms} to present the total number of inner iterations of Algorithm~\ref{alg:2nd-order-AL-nonconvex} for finding a $(10^{-4},10^{-2})$-SOSP of problem~\eqref{ms-reform} and the number of iterations of SpaRSA for finding a $10^{-4}$-FOSP of problem~\eqref{ms} over 10 random instances. We observe that the total number of inner iterations of Algorithm~\ref{alg:2nd-order-AL-nonconvex} remains at a similar level when the problem size becomes larger. In addition, Figure~\ref{fig:ms-beh} illustrates the convergence behavior of both methods for solving a single random instance of \eqref{ms} with $(n,l,m)=(20,2,80)$. 
{We observe that SpaRSA converges in fewer (inner) iterations than Algorithm~\ref{alg:2nd-order-AL-nonconvex}. However, it converges to a suboptimal solution with a significantly larger objective value compared to the solution found by Algorithm~\ref{alg:2nd-order-AL-nonconvex}.}

\subsection{A simplex-constrained nonnegative matrix factorization}
In this subsection we consider a simplex-constrained nonnegative matrix factorization (e.g., see \cite{HG10,LXWZ10,MQ07,TGL22}) in the form of
\begin{equation}\label{nmf}
\min_{U\in\bR^{n\times l},V\in\bR^{l\times m}}\ \left\{\frac{1}{2}\|X-UV\|_F^2+\gamma(\|U\|_F^2+\|V\|_F^2):V^Te_l = e_m, U\ge0, V\ge0\right\}
\end{equation}
for some $\gamma>0$, where $\|\cdot\|_F$ is the Frobenius norm and $e_d$ is the $d$-dimensional all-ones vector for any $d\ge 1$.

\begin{table}
\centering
\begin{tabular}{ccc||ll||ll}
\hline
& & &\multicolumn{2}{c||}{Relative error} & \multicolumn{2}{c}{Objective value} \\
$n$ & $l$ & $m$ & Algorithm 2 & SpaRSA & Algorithm 2  & SpaRSA  \\ \hline
20 & 2 & 10  & 4.8$\times 10^{-3}$ &   0.15  &  0.30 &  3.1  \\
20 & 2 & 20  & 3.6$\times 10^{-3}$ &   0.16  &  0.35 &  6.4  \\
20 & 2 & 30  & 3.2$\times 10^{-3}$ &   0.16 &  0.39 &  9.7   \\
30 & 3 & 15  & 5.8$\times 10^{-3}$  &  0.16   & 0.62  & 7.6    \\ 
30 & 3 & 30  & 4.3$\times 10^{-3}$  &  0.17   & 0.70  & 16.1 \\ 
30 & 3 & 45  & 3.6$\times 10^{-3}$  &  0.17   & 0.76  & 23.8  \\
40 & 4 & 20  & 6.3$\times 10^{-3}$  &  0.15    & 1.0  & 11.1  \\ 
40 & 4 & 40  & 4.6$\times 10^{-3}$  &  0.15    & 1.2  & 21.7  \\
40 & 4 & 60  & 4.0$\times 10^{-3}$  &  0.15    & 1.2  & 31.8  \\
50 & 5 & 25  & 6.8$\times 10^{-3}$  &  0.14    & 1.6  & 15.1  \\
50 & 5 & 50  & 5.0$\times 10^{-3}$  &  0.14   & 1.8 &  29.8  \\
50 & 5 & 75  & 4.3$\times 10^{-3}$  &  0.14   & 1.9 &  43.9  \\
\hline 
\end{tabular}
\caption{Numerical results for problem~\eqref{nmf}}
\label{table:nmf}
\end{table}

\begin{figure}[!ht]
\centering
\includegraphics[width=0.48\linewidth]{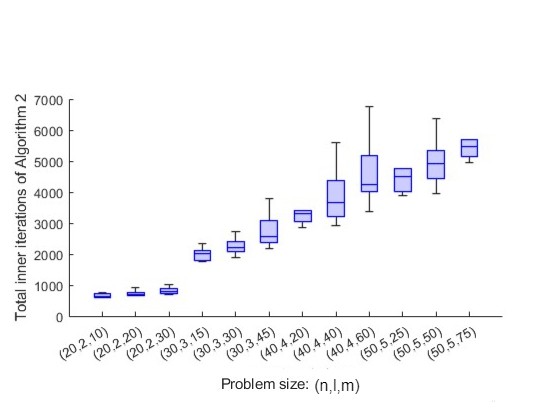}
\includegraphics[width=0.48\linewidth]{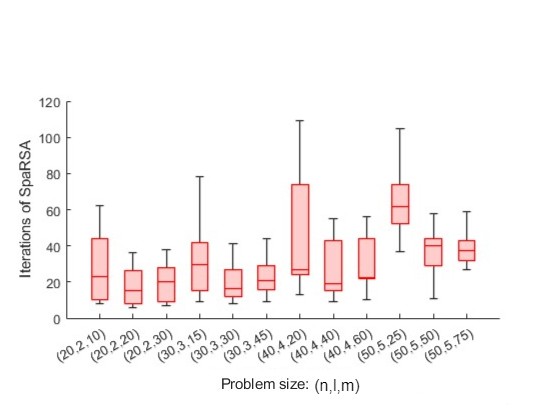}
\caption{{Left: The total inner iterations of Algorithm~\ref{alg:2nd-order-AL-nonconvex} before finding a $(10^{-4},10^{-2})$-SOSP of \eqref{nmf} for each problem size over 10 random instances. Right: The number of iterations of SpaRSA before finding a $10^{-4}$-FOSP of \eqref{nmf} for each problem size over 10 random instances.}}
\label{fig:tt-nmf}
\end{figure}

\begin{figure}[!ht]
\centering
\includegraphics[width=0.48\linewidth]{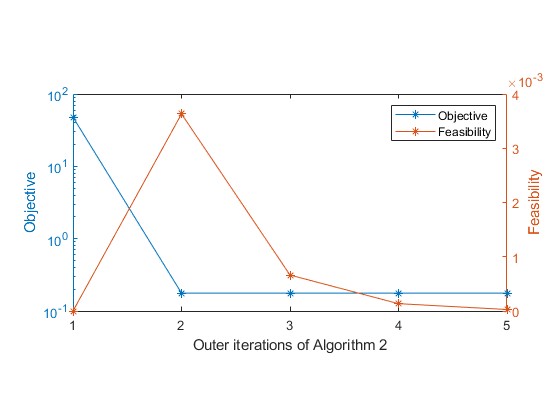}
\includegraphics[width=0.48\linewidth]{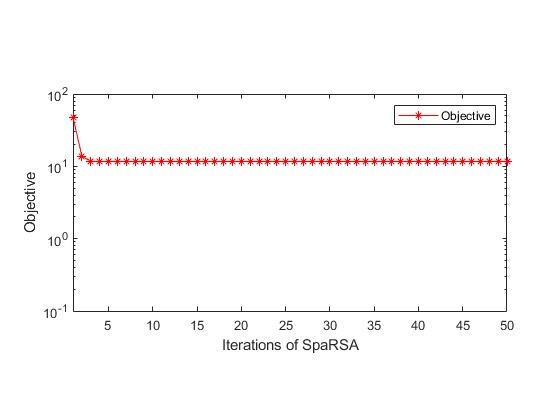}
\caption{Numerical results of Algorithm~\ref{alg:2nd-order-AL-nonconvex} and SpaRSA on a single random instance of problem~\eqref{nmf} with $(n,l,m)=(20,2,20)$. These two figures illustrate the convergence behavior of both methods in terms of objective value $\frac{1}{2}\|X-U^tV^t\|_F^2+\gamma(\|U^t\|_F^2+\|V^t\|_F^2)$ and feasibility $\|(V^t)^Te_l-e_m\|$.}
\label{fig:nmf-beh}
\end{figure}

For each triple $(n,l,m)$, we randomly generate 10 instances of problem~\eqref{nmf}. In particular, we first randomly generate $U^*$ with all entries chosen from the uniform distribution over $[0,2]$. We next randomly generate $\widetilde{V}$ with all entries chosen from the standard uniform distribution and set $V^*=\widetilde{V}D$, where $D$ is a diagonal matrix such that $(V^*)^Te_l=e_m$. In addition, we set $\gamma=0.005$ and $X=U^*V^*+E$, where the entries of $E$ follow the normal distribution with mean zero and standard deviation $0.01$. 

Our aim is to apply Algorithm~\ref{alg:2nd-order-AL-nonconvex} and SpaRSA \cite{WNF09sparse} to solve \eqref{nmf} and compare the solution quality of these methods in terms of objective value and relative error defined as $\|UV-U^*V^*\|_F/\|U^*V^*\|_F$. In particular, we first apply Algorithm~\ref{alg:2nd-order-AL-nonconvex} to find a $(10^{-4},10^{-2})$-SOSP of \eqref{nmf}, in which a minimum eigenvalue oracle that returns a deterministic output, namely the Matlab subroutine \textsf{[v,$\lambda$] = eigs(H,1,'smallestreal')} is used.  Given that the obtained approximate SOSP may not be a feasible point of \eqref{nmf}, we post-multiply it by a suitable diagonal matrix to obtain a feasible approximate solution 
of \eqref{nmf}. In addition, we apply SpaRSA \cite{WNF09sparse} to find a $10^{-4}$-FOSP of \eqref{nmf} by generating a sequence $\{(U^t,V^t)\}$ according to 
\[
(U^t,V^t)= \arg\min_{U ,V} \left\{\|(U,V)-(U^{t-1},V^{t-1})+\nabla f(U^{t-1},V^{t-1})/\alpha_{t-1}\|_F: V^Te_l = e_m, U\ge0, V\ge0\right\},
\]
where $f$ is the objective function of \eqref{nmf} and $\alpha_{t-1}$ is chosen by a backtracking line search scheme such that 
$f(U^t,V^t) \leq \max_{[t-M-1]_+ \leq i \leq t-1} f(U^i,V^i) - \sigma \alpha_{t-1}\|(U^t,V^t)-(U^{t-1},V^{t-1})\|^2_F/2$ for some $\sigma\in (0,1)$ and a positive integer $M$ (see \cite{WNF09sparse} for details). We terminate  SpaRSA  once the condition
\[
\|\alpha_{t-1}((U^t,V^t)-(U^{t-1},V^{t-1}))+\nabla f(U^{t-1},V^{t-1})-\nabla f(U^t,V^t)\|_F \le10^{-4}
\]
is met. It can be verified that such $(U^t,V^t)$ is a $10^{-4}$-FOSP of \eqref{nmf}.  In addition, we choose the initial point $U^0$ and $V^0$ with all entries equal $1$ and $1/k$ respectively for all the methods. We set the parameters for Algorithm~\ref{alg:2nd-order-AL-nonconvex}  as $(\Lambda,\rho_0,\alpha,r)= (10^3,10^2,0.25,1.5)$ and $\lambda^0=(0,\ldots,0)^T$, and choose the same parameters for Algorithm~\ref{alg:NCG} and SpaRSA as the ones described in Subsection~\ref{rrr}.

The computational results of Algorithm~\ref{alg:2nd-order-AL-nonconvex} and  SpaRSA \cite{WNF09sparse}  for the instances randomly generated above are presented in Table~\ref{table:nmf}. In detail, the values of $n$, $k$ and $m$ are listed in the first three columns,  respectively. For each triple $(n,k,m)$, the average relative error and the average objective value of the feasible approximate solutions found by each method over 10 random instances are given in the rest columns. One can observe that the approximate SOSP found by Algorithm~\ref{alg:2nd-order-AL-nonconvex} has significantly lower relative error and objective value than the approximate FOSP obtained by SpaRSA. 

For each triple $(n, l, m)$, we use box charts in Figure~\ref{fig:tt-nmf} to present the total number of inner iterations of Algorithm~\ref{alg:2nd-order-AL-nonconvex} for finding a $(10^{-4},10^{-2})$-SOSP of problem~\eqref{nmf} and the number of iterations of SpaRSA for finding a $10^{-4}$-FOSP of problem~\eqref{nmf} over 10 random instances. We observe that the total number of inner iterations of Algorithm~\ref{alg:2nd-order-AL-nonconvex} increases when the problem size becomes larger. In addition, Figure~\ref{fig:nmf-beh} illustrates the convergence behavior of both methods for solving a single random instance of \eqref{nmf} with $(n,l,m)=(20,2,20)$. 
{We observe that SpaRSA converges in fewer (inner) iterations than Algorithm~\ref{alg:2nd-order-AL-nonconvex}, but it converges to a poorer approximate solution. Specifically, it quickly gets stuck at a suboptimal point with a significantly larger objective value compared to the solution found by Algorithm~\ref{alg:2nd-order-AL-nonconvex}.}



\subsection{A sphere-constrained nonnegative matrix factorization}
In this subsection we consider a sphere-constrained nonnegative matrix factorization in the form of
\begin{equation}\label{sph-nmf}
\min_{U\in\bR^{n\times l},V\in\bR^{l\times m}}\ \left\{\frac{1}{2}\|X-UV\|_F^2+\gamma(\|U\|_F^2+\|V\|_F^2):\|V\|_F^2 = m, U\ge0, V\ge0\right\},
\end{equation}
where $\|\cdot\|_F$ is the Frobenius norm.

For each triple $(n,l,m)$, we randomly generate 10 instances of problem~\eqref{sph-nmf}. In particular, we first randomly generate $U^*$ with all entries chosen from the uniform distribution over $[0,2]$. We next randomly generate $\widetilde{V}$ with all entries chosen from the standard uniform distribution and set $V^*=\sqrt{m}\widetilde{V}/\|\widetilde{V}\|_F$. In addition, we set $\gamma=0.005$ and $X=U^*V^*+E$, where the entries of $E$ follow the normal distribution with mean zero and standard deviation $0.01$. 

Our aim is to apply Algorithm~\ref{alg:2nd-order-AL-nonconvex} and SpaRSA \cite{WNF09sparse} to solve \eqref{sph-nmf} and compare the solution quality of these methods in terms of objective value and relative error defined as $\|UV-U^*V^*\|_F/\|U^*V^*\|_F$.  In particular, we first apply Algorithm~\ref{alg:2nd-order-AL-nonconvex} to find a $(10^{-4},10^{-2})$-SOSP of \eqref{sph-nmf}, in which a minimum eigenvalue oracle that returns a deterministic output, namely the Matlab subroutine \textsf{[v,$\lambda$] = eigs(H,1,'smallestreal')} is used.  Given that the obtained approximate SOSP may not be a feasible point of \eqref{sph-nmf}, we post-multiply it by a suitable diagonal matrix to obtain a feasible approximate solution 
of \eqref{sph-nmf}. In addition, we apply SpaRSA \cite{WNF09sparse} to find a $10^{-4}$-FOSP of \eqref{nmf} by generating a sequence $\{(U^t,V^t)\}$ according to 
\[
(U^t,V^t)= \arg\min_{U ,V} \left\{\|(U,V)-(U^{t-1},V^{t-1})+\nabla f(U^{t-1},V^{t-1})/\alpha_{t-1}\|_F: \|V\|^2_F=m, U\ge0, V\ge0\right\},
\]
where $f$ is the objective function of \eqref{sph-nmf} and $\alpha_{t-1}$ is chosen by a backtracking line search scheme such that 
$f(U^t,V^t) \leq \max_{[t-M-1]_+ \leq i \leq t-1} f(U^i,V^i) - \sigma \alpha_{t-1}\|(U^t,V^t)-(U^{t-1},V^{t-1})\|^2_F/2$ for some $\sigma\in (0,1)$ and a positive integer $M$ (see \cite{WNF09sparse} for details). We terminate  SpaRSA  once the condition
\[
\|\alpha_{t-1}((U^t,V^t)-(U^{t-1},V^{t-1}))+\nabla f(U^{t-1},V^{t-1})-\nabla f(U^t,V^t)\|_F \le10^{-4}
\]
is met. It can be verified that such $(U^t,V^t)$ is a $10^{-4}$-FOSP of \eqref{sph-nmf}.  Furthermore, we choose the initial point $U^0$ and $V^0$ with all entries equal to $1$ and $1/l$ respectively for all the methods. We set the parameters for Algorithm~\ref{alg:2nd-order-AL-nonconvex}  as $(\Lambda,\rho_0,\alpha,r)= (10^3,10^2,0.25,1.5)$ and $\lambda^0=0$, and choose the same parameters for Algorithm~\ref{alg:NCG} and SpaRSA as the ones described in Subsection~\ref{rrr}.

\begin{table}
\centering
\begin{tabular}{ccc||ll||ll}
\hline
& & &\multicolumn{2}{c||}{Relative error} & \multicolumn{2}{c}{Objective value}  \\
$n$ & $l$ & $m$ & Algorithm 2 & SpaRSA & Algorithm 2  & SpaRSA  \\ \hline
20 & 2 & 5  & 5.5$\times 10^{-3}$ & 0.10 & 0.29 & 2.3  \\
20 & 2 & 10 & 3.9$\times 10^{-3}$ & 0.12 & 0.33 & 5.5  \\
20 & 2 & 15 & 3.4$\times 10^{-3}$ & 0.13 & 0.36 & 8.6  \\
20 & 2 & 20 & 3.1$\times 10^{-3}$ & 0.12 & 0.39 & 10.9 \\ 
20 & 2 & 25 & 2.9$\times 10^{-3}$ & 0.12 & 0.42 & 13.5  \\ 
20 & 2 & 30 & 2.8$\times 10^{-3}$ & 0.12 & 0.45 & 16.5 \\
40 & 4 & 10 & 4.5$\times 10^{-3}$ & 0.12 & 1.1 & 18.1 \\
40 & 4 & 20 & 3.1$\times 10^{-3}$ & 0.12 & 1.2 & 36.8  \\
40 & 4 & 30 & 2.8$\times 10^{-3}$ & 0.12 & 1.3 & 54.6 \\
40 & 4 & 40 & 2.6$\times 10^{-3}$ & 0.13 & 1.3 & 75.0 \\
40 & 4 & 50 & 2.2$\times 10^{-3}$ & 0.13 & 1.4 & 94.5  \\ 
40 & 4 & 60 & 2.3$\times 10^{-3}$ & 0.13 & 1.5 & 113.0  \\
\hline 
\end{tabular}
\caption{Numerical results for problem~\eqref{sph-nmf}}
\label{table:snmf}
\end{table}

\begin{figure}[!ht]
\centering
\includegraphics[width=0.48\linewidth]{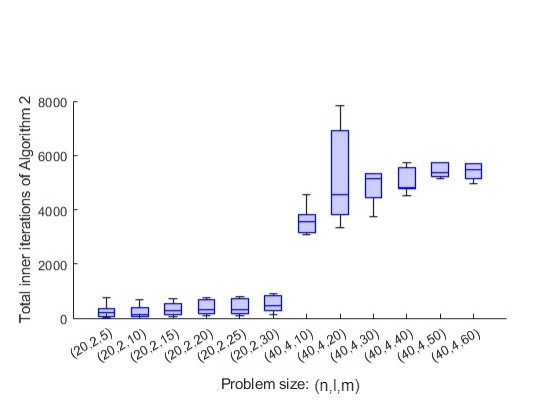}
\includegraphics[width=0.48\linewidth]{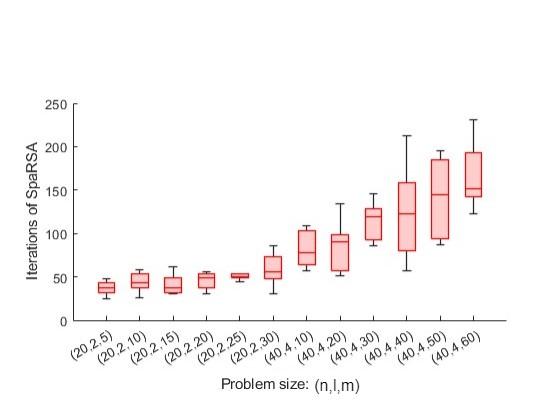}
\caption{{ Left: The total inner iterations of Algorithm~\ref{alg:2nd-order-AL-nonconvex} before finding a $(10^{-4},10^{-2})$-SOSP of \eqref{sph-nmf} for each problem size over 10 random instances. Right: The number of iterations of SpaRSA before finding a $10^{-4}$-FOSP of \eqref{sph-nmf} for each problem size over 10 random instances.}}
\label{fig:tt-snmf}
\end{figure}

\begin{figure}[!ht]
\centering
\includegraphics[width=0.48\linewidth]{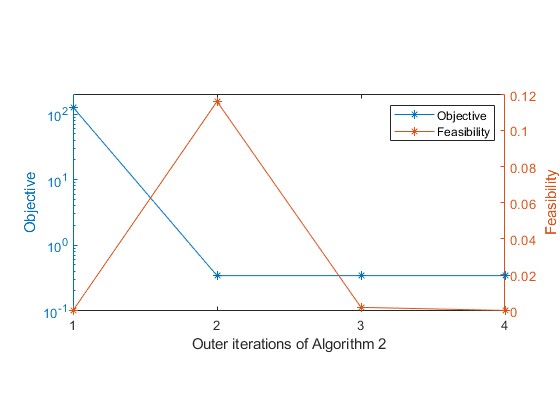}
\includegraphics[width=0.48\linewidth]{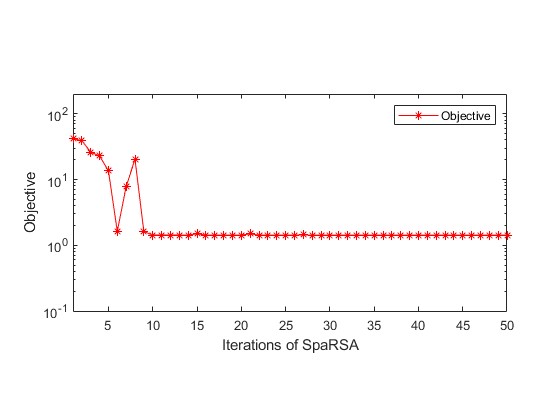}
\caption{{Numerical results of Algorithm~\ref{alg:2nd-order-AL-nonconvex} and SpaRSA on a single random instance of problem~\eqref{sph-nmf} with $(n,l,m)=(20,2,5)$. These two figures illustrate the convergence behavior of both methods in terms of objective value $\frac{1}{2}\|X-U^tV^t\|_F^2+\gamma(\|U^t\|_F^2+\|V^t\|_F^2)$ and feasibility $|\|V^t\|^2_F-m|$.}}
\label{fig:snmf-beh}
\end{figure}

The computational results of Algorithm~\ref{alg:2nd-order-AL-nonconvex} and SpaRSA \cite{WNF09sparse}  for the instances randomly generated above are presented in Table~\ref{table:snmf}. In detail, the values of $n$, $l$ and $m$ are listed in the first three columns,  respectively.  For each triple $(n,l,m)$, the average relative error and the average objective value of the feasible approximate solutions over 10 random instances are given in the rest columns. One can observe that the approximate SOSP found by Algorithm~\ref{alg:2nd-order-AL-nonconvex} has significantly lower relative error and objective value than the approximate FOSP obtained by SpaRSA. 

For each triple $(n, l, m)$, we use box charts in Figure~\ref{fig:tt-snmf} to present the total number of inner iterations of Algorithm~\ref{alg:2nd-order-AL-nonconvex} for finding a $(10^{-4},10^{-2})$-SOSP of problem~\eqref{sph-nmf} and the number of iterations of SpaRSA for finding a $10^{-4}$-FOSP of problem~\eqref{sph-nmf} over 10 random instances. We observe that the total number of inner iterations of Algorithm~\ref{alg:2nd-order-AL-nonconvex} increases when the problem size becomes larger. In addition, Figure~\ref{fig:snmf-beh} illustrates the convergence behavior of both methods for solving a single random instance of \eqref{sph-nmf} with $(n,l,m)=(20,2,5)$. 
{We observe that SpaRSA converges in fewer (inner) iterations than Algorithm~\ref{alg:2nd-order-AL-nonconvex}. However, it converges to a suboptimal solution with a much larger objective value compared to the solution found by Algorithm~\ref{alg:2nd-order-AL-nonconvex}.}

\section{Proof of the main results} \label{sec:proof}

In this section we provide a proof of our main results presented in Sections~\ref{sec:opt}, \ref{sec:sbpb-solver}, and \ref{sec:AL-method}, which are, particularly, Theorems~\ref{thm:1stopt} and \ref{thm:NCG-iter-oper-cmplxity}, Lemma~\ref{lem:level-set-augmented-lagrangian-func}, and Theorems~\ref{thm:output-alg1}, \ref{thm:out-itr-cmplxity-1}, \ref{thm:total-iter-cmplxity}, and \ref{thm:total-iter-cmplxity2}. 

Let us start with the following lemma concerning some properties of the $\vartheta$-LHSC barrier function.

\begin{lemma}\label{lem:ppty-bar}
Let $x\in\rmint\cK$ and $\beta\in(0,1)$ be given. Then the following statements hold for the $\vartheta$-LHSC barrier function $B$.
\begin{enumerate}[{\rm (i)}]
\item $(\|\nabla B(x)\|_x^*)^2=-x^T\nabla B(x)=\|x\|_x^2=\vartheta$.
\item $-\nabla B(x)\in\rmint\cK^*$.
\item $\{y:\|y-x\|_x<1\}\subset\rmint\cK$.
\item For any $y$ satisfying $\|y-x\|_x\le\beta$, it holds that
\beqa
(1-\beta)\|v\|_x \le\|v\|_y\le(1-\beta)^{-1}\|v\|_x,\quad \forall v\in\bR^n, \label{ineq:2-local-pnorm} \\
(1-\beta)\|v\|_x^*\le\|v\|_y^*\le(1-\beta)^{-1}\|v\|_x^*,\quad \forall v\in\bR^n. \label{ineq:2-local-norm}
\eeqa
\item $\{s:\|s+\nabla B(x)\|_x^*\le 1\}\subseteq\cK^*$.
\item $\|\nabla^2 B(y)-\nabla^2 B(x)\|_x^*\le \frac{2-\beta}{(1-\beta)^2}\|y-x\|_x$ holds for all $y$ with $\|y-x\|_x\le\beta$. 
\end{enumerate}
\end{lemma}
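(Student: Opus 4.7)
The plan is to establish (i)--(vi) essentially by specializing standard properties of $\vartheta$-LHSC barriers (as developed in Nesterov--Nemirovski \cite{NN94}) to our setting, with the only genuine computation being the one for (i).

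For (i), I would differentiate the logarithmic homogeneity identity $B(tx)=B(x)-\vartheta\ln t$ in $t$ to obtain $\nabla B(tx)^T x = -\vartheta/t$; evaluating at $t=1$ yields $-x^T\nabla B(x)=\vartheta$. Differentiating the identity $\nabla B(tx)^T x = -\vartheta/t$ with respect to $x$ gives $t\,\nabla^2 B(tx)x + \nabla B(tx) = 0$, so at $t=1$ one has the crucial relation $\nabla^2 B(x)x = -\nabla B(x)$. From this, $\|x\|_x^2 = x^T\nabla^2 B(x)x = -x^T\nabla B(x)=\vartheta$, and also $\nabla^2 B(x)^{-1}\nabla B(x)=-x$, so $(\|\nabla B(x)\|_x^*)^2 = \nabla B(x)^T\nabla^2 B(x)^{-1}\nabla B(x) = -\nabla B(x)^T x = \vartheta$. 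This chain delivers all three equalities in (i) at once.

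Parts (ii) and (iii) are classical: (ii) follows from the fact that for a $\vartheta$-LHSC barrier $-\nabla B$ is the gradient map into $\rmint\cK^*$ (Nesterov--Nemirovski), and (iii) is the Dikin ellipsoid theorem for self-concordant functions, which gives that the open unit ball in the local norm $\|\cdot\|_x$ is contained in $\rmint\cK$. For (iv), I would invoke the standard self-concordance consequence that for $y$ with $\|y-x\|_x\le\beta<1$,
\[
(1-\beta)^2\,\nabla^2 B(x)\preceq \nabla^2 B(y)\preceq (1-\beta)^{-2}\,\nabla^2 B(x),
\]
which immediately yields both \eqref{ineq:2-local-pnorm} and its dual version \eqref{ineq:2-local-norm} after taking square roots (and inverting for the dual norm).

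For (v), the main idea is to use that $-\nabla B(x)\in\rmint\cK^*$ together with a local dual-norm argument. Writing $s = -\nabla B(x)+w$ with $\|w\|_x^*\le 1$, I would express $s^T y$ for arbitrary $y\in\cK$ and bound it from below using (i), (ii), and Cauchy--Schwarz in the local norm; this is essentially the standard fact that the dual Dikin ball of radius $1$ around $-\nabla B(x)$ lies in $\cK^*$. Finally (vi) is a standard quantitative consequence of the third-derivative estimate $|\psi'''(0)|\le 2\psi''(0)^{3/2}$ in the definition of self-concordance: integrating this along the segment from $x$ to $y$ in the local norm yields the claimed Lipschitz-type bound on $\nabla^2 B$ with constant $(2-\beta)/(1-\beta)^2$. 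The only mildly delicate step will be bookkeeping the self-concordance constants in (iv) and (vi) carefully so that the stated factors $(1-\beta)^{\pm 1}$ and $(2-\beta)/(1-\beta)^2$ come out exactly; none of the steps involves a conceptual obstacle beyond what is already in \cite{NN94}.
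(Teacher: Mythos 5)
Your proposal is correct and rests on the same Nesterov--Nemirovski self-concordance machinery the paper uses. The paper simply delegates parts (i)--(v) to \cite[Lemma~1]{HL21bar} (which establishes them by exactly the arguments you sketch: differentiating the homogeneity identity to get $\nabla^2 B(x)x=-\nabla B(x)$ for (i), the Dikin ellipsoid for (iii), the two-sided Hessian bound $(1-r)^2\nabla^2 B(x)\preceq\nabla^2 B(y)\preceq(1-r)^{-2}\nabla^2 B(x)$ from \cite[Theorem~2.1.1]{NN94} for (iv), and the dual Dikin ball together with $-\nabla B(x)^Ty\ge\|y\|_x$ for $y\in\cK$ for (v)), and then proves (vi) directly by writing $\|\nabla^2 B(y)-\nabla^2 B(x)\|_x^*=\|\nabla^2 B(x)^{-1/2}\nabla^2 B(y)\nabla^2 B(x)^{-1/2}-I\|$ and plugging in the same Dikin-type bound -- which is the inequality your integration of $|\psi'''|\le 2(\psi'')^{3/2}$ would reproduce. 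So the content is the same; you have merely inlined the arguments the paper cites, and compressed the last spectral-norm computation for (vi) which should be made explicit to land exactly on the constant $(2-\beta)/(1-\beta)^2$.
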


\begin{proof}
The proof of statements (i)-(v) can be found in \cite[Lemma~1]{HL21bar}.

We next prove statement (vi). Let $y$ be such that $\|y-x\|_x\le\beta$. It follows from \cite[Theorem 2.1.1]{NN94} that
\begin{equation}\label{loc-Lip-bar}
(1-\|y-x\|_x)^2 I \preceq \nabla^2 B(x)^{-1/2} \nabla^2 B(y) \nabla^2 B(x)^{-1/2} \preceq(1-\|y-x\|_x)^{-2} I.
\end{equation}
By \eqref{M-norm}, \eqref{loc-Lip-bar}, and $\|y-x\|_x\le\beta$, one has
\[
\begin{array}{rcl}
\|\nabla^2 B(y)-\nabla^2 B(x)\|_x^*
&\overset{\eqref{M-norm}}{=}&\max_{\|u\|\le1}\|\nabla^2 B(x)^{-1/2}(\nabla^2 B(y)-\nabla^2 B(x))\nabla^2 B(x)^{-1/2}u\|\\[6pt]
&=&\|\nabla^2 B(x)^{-1/2}\nabla^2 B(y)\nabla^2 B(x)^{-1/2}-I\| \\[4pt]
&\overset{\eqref{loc-Lip-bar}}{\le} & \max\{1-(1-\|y-x\|_x)^2,(1-\|y-x\|_x)^{-2}-1\} = (1-\|y-x\|_x)^{-2}-1\\[4pt]
&=&\frac{2-\|y-x\|_x}{(1-\|y-x\|_x)^{2}}\|y-x\|_x \ \le\  \frac{2-\beta}{(1-\beta)^2}\|y-x\|_x,
\end{array}
\]
where the last inequality is due to $\|y-x\|_x\le\beta$. 
Hence, statement (vi) holds as desired.
%
\end{proof}

\subsection{Proof of the main results in Section~\ref{sec:opt}}\label{sec:proof-sec3}
In this subsection we provide a proof of Theorems~\ref{thm:1stopt}.

\begin{proof}[Proof of Theorem \ref{thm:1stopt}]
By $M\in\nabla^{-2}B(x^*)$, the full column rank of $M^{1/2}\nabla c(x^*)$, and also the discussion in Section \ref{sec:opt}, one knows that Robinson's constraint qualification holds at $x^*$. Since $x^*$ is a local minimizer of \eqref{model:equa-cnstr}, it then follows from \cite[Theorem 3.38]{R11nopt} that there exists some $\lambda^*\in\bR^m$ such that 
\begin{equation}\label{class-1stopt}
\nabla f(x^*) + \nabla c(x^*) \lambda^* \in-\cN_{\cK}(x^*).
\end{equation}
Further, by \cite[Proposition 1]{HL21bar}, one knows that \eqref{class-1stopt} holds if and only if \eqref{1stopt-cond-1} and \eqref{1stopt-cond-2} hold.  Consequently, \eqref{1stopt-cond-1} and \eqref{1stopt-cond-2} hold as desired.

We next prove \eqref{2ndopt-cond}. It follows from Lemma \ref{lem:ginvH} that $\{x^*+M^{1/2}d:\|d\|<1\}\subseteq\cK$. Using this and the fact that $x^*$ is a local minimizer of \eqref{model:equa-cnstr}, we see that $d^*=0$ is a local minimizer of the problem
\begin{equation}\label{piece-res-pb}
\min_{d}\ \left\{f(x^*+M^{1/2}d):c(x^*+M^{1/2}d)=0\right\}.
\end{equation}
In addition, since $M^{1/2}\nabla c(x^*)$ has full column rank,  it is clear that LICQ holds at $d^*=0$ for \eqref{piece-res-pb}. By the first- and second-order optimality conditions of \eqref{piece-res-pb} at $d^*=0$, there exists some $\tilde{\lambda}^*\in\bR^m$ such that
\begin{eqnarray}
&&M^{1/2}(\nabla f(x^*) + \nabla c(x^*)\tilde{\lambda}^*)=0,\label{piece-1stopt}\\
&&d^TM^{1/2}\left(\nabla^2 f(x^*) + \sum_{i=1}^m\tilde{\lambda}^*_i\nabla^2 c_i(x^*)\right)M^{1/2}d\ge 0,\quad \forall d\in\{d:\nabla c(x^*)^TM^{1/2}d=0\}.\label{piece-2ndopt}
\end{eqnarray}
In view of \eqref{1stopt-cond-2}, \eqref{piece-1stopt}, and the fact that $M^{1/2}\nabla c(x^*)$ has full column rank, one can see that $\tilde{\lambda}^*=\lambda^*$. Using this and \eqref{piece-2ndopt}, we conclude that \eqref{2ndopt-cond} holds. 
\end{proof}

\subsection{Proof of the main results in Section~\ref{sec:sbpb-solver}}\label{sec:proof-sec4}

In this subsection we first establish several technical lemmas and then use them to prove Theorem~\ref{thm:NCG-iter-oper-cmplxity}. 

{To proceed, by \eqref{Mt} and the definitions of local norms, one can verify that}
\begin{equation}\label{Mt-loc-norm}
{\|d\| = \|M_t d\|_{x^t},\quad \|d\|_{x^t}^* = \|M_t d\|,\quad \|H\|_{x^t}^*=\|M_t H M_t\|,\quad \forall d\in\bR^n, H\in\bR^{n\times n}.}
\end{equation}
{ In addition, as} a consequence of Assumption~\ref{asp:NCG-cmplxity}(b) and Lemma~\ref{lem:ppty-bar}(vi), one can observe that $\phi_\mu$ is locally Lipschitz continuous in $\Omega$ with respect to the local norms, i.e.,
\begin{equation}\label{phimu-Lip}
\|\nabla^2 \phi_\mu(y)-\nabla^2 \phi_\mu(x)\|_x^*\le L_{H}^\phi\|y-x\|_x,\quad \forall x,y\in\Omega\text{ with }\|y-x\|_x\le\beta,
\end{equation}
where
\begin{equation}\label{bar-Hes-Lip}
L_H^\phi := L_H^F + \mu(2-\beta)/(1-\beta)^2.
\end{equation}
The following lemma directly follows from \eqref{phimu-Lip}. Its proof can be found in \cite[Lemma~3]{HL21bar}.

\begin{lemma}
Under Assumption~\ref{asp:NCG-cmplxity}(b), the following inequalities hold:
\begin{equation}
\|\nabla \phi_\mu(y)-\nabla \phi_\mu(x)-\nabla^2 \phi_\mu(x)(y-x)\|_x^*\le\frac{1}{2}L_H^\phi\|y-x\|_x^2,\  \forall x,y\in\Omega\text{ with }\|y-x\|_x\le\beta, \label{apx-nxt-grad}
\end{equation}
\begin{equation}
\phi_\mu(y)\le \phi_\mu(x) + \nabla \phi_\mu(x)^T(y-x) + \frac{1}{2}(y-x)^T\nabla^2 \phi_\mu(x) (y-x) + \frac{1}{6}L_H^\phi\|y-x\|_x^3,\  \forall x,y\in\Omega\text{ with }\|y-x\|_x\le\beta, \label{desc-ineq}
\end{equation}
where $\Omega$ and $L_H^\phi$ are given in Assumption~\ref{asp:NCG-cmplxity}(b) and \eqref{bar-Hes-Lip}, respectively.
\end{lemma}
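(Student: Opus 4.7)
The plan is to reduce both inequalities to standard Taylor-type identities along the segment from $x$ to $y$, using the fundamental theorem of calculus, and then control the remainder terms by the local Hessian Lipschitz bound \eqref{phimu-Lip}. First I would observe that for $t \in [0,1]$, the point $z_t := x + t(y-x)$ satisfies $\|z_t - x\|_x = t\|y-x\|_x \le \beta$, so by the convexity of $\Omega$ we have $z_t \in \Omega$ and the hypothesis of \eqref{phimu-Lip} applies at $x$ with $y$ replaced by $z_t$, yielding $\|\nabla^2 \phi_\mu(z_t) - \nabla^2 \phi_\mu(x)\|_x^* \le L_H^\phi \, t \|y-x\|_x$.

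For \eqref{apx-nxt-grad}, I would write the identity
\[
\nabla \phi_\mu(y) - \nabla \phi_\mu(x) - \nabla^2 \phi_\mu(x)(y-x) = \int_0^1 \bigl[\nabla^2 \phi_\mu(z_t) - \nabla^2 \phi_\mu(x)\bigr](y-x)\,dt,
\]
then take the dual local norm $\|\cdot\|_x^*$ on both sides and pass it inside the integral via the triangle inequality. Combining the definition of $\|\cdot\|_x^*$ applied to the operator (see \eqref{M-norm}), namely $\|Mv\|_x^* \le \|M\|_x^* \|v\|_x$, with the Hessian Lipschitz bound above gives an integrand bounded by $L_H^\phi \, t \|y-x\|_x^2$. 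Integrating $t$ over $[0,1]$ produces the factor $1/2$ and yields \eqref{apx-nxt-grad}.

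For \eqref{desc-ineq}, I would use the second-order Taylor identity
\[
\phi_\mu(y) - \phi_\mu(x) - \nabla\phi_\mu(x)^T(y-x) - \tfrac12 (y-x)^T\nabla^2\phi_\mu(x)(y-x) = \int_0^1 (1-t)(y-x)^T\bigl[\nabla^2 \phi_\mu(z_t) - \nabla^2 \phi_\mu(x)\bigr](y-x)\,dt.
\]
To bound the integrand I would apply the Cauchy–Schwarz-type inequality $|u^T v| \le \|u\|_x \|v\|_x^*$ (which follows from $u^T v = u^T \nabla^2 B(x)^{1/2} \nabla^2 B(x)^{-1/2} v$ and the standard Cauchy–Schwarz) together with the operator bound, giving $(1-t)L_H^\phi \, t \|y-x\|_x^3$. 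Integrating $\int_0^1 (1-t)t\,dt = 1/6$ produces the stated constant.

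The only subtlety, and the one I would be most careful about, is keeping the norm base point fixed at $x$ throughout, so that \eqref{phimu-Lip} can be applied without invoking the change-of-base-point inequalities \eqref{ineq:2-local-pnorm}–\eqref{ineq:2-local-norm}; this is what makes the clean constants $1/2$ and $1/6$ come out. Everything else is routine Taylor-with-remainder, and the argument is a transcription of the classical Euclidean proof with the standard norm replaced by the local norm $\|\cdot\|_x$ at the base point.
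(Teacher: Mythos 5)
Your proof is correct. The paper itself does not reproduce a proof of this lemma, instead pointing to \cite[Lemma~3]{HL21bar}, and the argument you give is the natural (and presumably the cited) one: fix the base point $x$, write the first- and second-order Taylor remainders as integrals of $\nabla^2\phi_\mu(z_t)-\nabla^2\phi_\mu(x)$ along the segment, bound the operator norm of the integrand by $L_H^\phi t\|y-x\|_x$ via \eqref{phimu-Lip} applied at $z_t\in\Omega$ with $\|z_t-x\|_x = t\|y-x\|_x\le\beta$, and integrate $\int_0^1 t\,dt = 1/2$ and $\int_0^1(1-t)t\,dt = 1/6$. Your key remark — that keeping the norm anchored at $x$ throughout is what allows \eqref{phimu-Lip} to be used directly and gives the clean constants — is exactly right, and the two auxiliary facts you invoke ($\|Mv\|_x^*\le\|M\|_x^*\|v\|_x$ from \eqref{M-norm} and the Cauchy--Schwarz pairing $|u^Tv|\le\|u\|_x\|v\|_x^*$) both hold.
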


The following lemma shows that all iterates generated by Algorithm~\ref{alg:NCG} lie in $\cS$. 
\begin{lemma}\label{lem:NCG-set}
Suppose that Assumption~\ref{asp:NCG-cmplxity} holds. Let $\{x^t\}_{t\in\bT}$ be all the iterates generated by Algorithm~\ref{alg:NCG}, where $\bT$ is a subset of consecutive nonnegative integers starting from $0$. Then $\{x^t\}_{t\in\bT}\subset\cS$, where $\cS$ is defined in \eqref{b-subpb-set}.
\end{lemma}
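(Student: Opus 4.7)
The plan is a straightforward induction on $t\in\bT$, with base case $x^0=u^0\in\rmint\cK$ and $\phi_\mu(x^0)=\phi_\mu(u^0)$, so that $x^0\in\cS$ trivially. For the inductive step, assume $x^t\in\cS$. We must verify that (a) $x^{t+1}=x^t+\alpha_tM_td^t$ lies in $\rmint\cK$ (so that $\phi_\mu(x^{t+1})$ is defined), and (b) $\phi_\mu(x^{t+1})\le\phi_\mu(u^0)$.

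The first key observation is that the preconditioner $M_t$ satisfying $\nabla^2B(x^t)^{-1}=M_tM_t^T$ is an isometry between the Euclidean norm on the search-direction side and the local norm $\|\cdot\|_{x^t}$ at the iterate: for any $d\in\bR^n$,
\[
\|M_td\|_{x^t}^2 = (M_td)^T\nabla^2B(x^t)(M_td)=d^T M_t^T(M_t^{-T}M_t^{-1})M_td = \|d\|^2,
\]
so $\|M_td\|_{x^t}=\|d\|$. The second observation is that in every branch of Algorithm~\ref{alg:NCG} the search direction $d^t$ is explicitly capped in norm by $\beta$: from \eqref{dk-sol} one has $\|d^t\|=\min\{\|\hat d^t\|,\beta\}\le\beta$, from \eqref{dk-nc} one has $\|d^t\|\le\beta$, and from \eqref{dk-2nd-nc} one has $\|d^t\|\le\beta\|v\|=\beta$. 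Since $\alpha_t=\theta^{j_t}\in(0,1]$, the full step satisfies $\|\alpha_tM_td^t\|_{x^t}=\alpha_t\|d^t\|\le\beta<1$. Applying Lemma~\ref{lem:ppty-bar}(iii) at $x^t\in\rmint\cK$ then gives $x^{t+1}\in\rmint\cK$, establishing (a). The same argument shows that every trial iterate $x^t+\theta^jM_td^t$ appearing in the line-search tests \eqref{ls-sol}--\eqref{ls-nc} belongs to $\rmint\cK$, so the inequalities tested there are well defined.

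For (b), the accepted line-search conditions \eqref{ls-sol} and \eqref{ls-nc} force a strict decrease: in the SOL branch, $\phi_\mu(x^{t+1})<\phi_\mu(x^t)-\eta\epsilon_H\alpha_t^2\|d^t\|^2\le\phi_\mu(x^t)$, and analogously in the NC branch, $\phi_\mu(x^{t+1})<\phi_\mu(x^t)-\eta\alpha_t^2\|d^t\|^3/2\le\phi_\mu(x^t)$. (Note that $d^t\ne 0$ whenever a search direction is generated, since Algorithm~\ref{alg:capped-CG} is invoked with $g=M_t^T\nabla\phi_\mu(x^t)\ne 0$ in the SOL/first-NC branches, and $v$ is a unit vector in the second-NC branch.) Combining with the inductive hypothesis $\phi_\mu(x^t)\le\phi_\mu(u^0)$ yields $\phi_\mu(x^{t+1})<\phi_\mu(u^0)$, so $x^{t+1}\in\cS$.

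There is essentially no serious obstacle in this argument; the only subtle point is articulating the isometry identity $\|M_td\|_{x^t}=\|d\|$, which is what allows the Euclidean cap $\|d^t\|\le\beta$ imposed inside the algorithm to translate into the local-norm cap $\|\alpha_tM_td^t\|_{x^t}<1$ required to invoke the Dikin-ellipsoid property of Lemma~\ref{lem:ppty-bar}(iii). One should also be careful to note that the line search is known to terminate in finitely many backtracks (this is part of the well-definedness of Algorithm~\ref{alg:NCG} and is established separately in the complexity analysis), so that $\alpha_t$ is well defined and the induction can be carried through for every $t\in\bT$.
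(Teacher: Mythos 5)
Your proof is correct and takes essentially the same approach as the paper: the inductive step uses the isometry $\|M_td^t\|_{x^t}=\|d^t\|$ together with the cap $\|d^t\|\le\beta<1$ from \eqref{dk-nc}--\eqref{dk-2nd-nc} to invoke Lemma~\ref{lem:ppty-bar}(iii), and then appeals to the strict descent enforced by the line-search tests \eqref{ls-sol}--\eqref{ls-nc}. The paper organizes the argument as two passes (first $\rmint\cK$ membership by induction, then the descent property), whereas you fold both into a single inductive step, but this is a cosmetic difference.
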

\begin{proof}
We first prove $\{x^t\}_{t\in\bT}\subset\rmint\cK$ by induction. Observe from Algorithm~\ref{alg:NCG} that $x^0=u^0\in\rmint\cK$. Suppose that $x^t\in\rmint\cK$ is generated at iteration $t$ of Algorithm~\ref{alg:NCG} and $x^{t+1}$ is generated at iteration $t+1$. We next prove $x^{t+1}\in\rmint\cK$. Indeed, observe from Algorithm~\ref{alg:NCG} that $x^{t+1}=x^t + \alpha_tM_td^t$ with $\alpha_t\in(0,1]$ and $d^t$ given in one of \eqref{dk-nc}-\eqref{dk-2nd-nc}. It follows from \eqref{dk-nc}-\eqref{dk-2nd-nc} that $\|d^t\|\le\beta$. By these and and {the first relation in \eqref{Mt-loc-norm}}, one has
\begin{equation}\label{local-dist-twoiter}
\|x^{t+1}-x^t\|_{x^t}=\alpha_t\|M_td^t\|_{x^t}\le\|M_td^t\|_{x^t}\overset{\eqref{Mt-loc-norm}}{=}\|d^t\|\le\beta,
\end{equation}
which, along with $x^t\in\rmint\cK$, $\beta<1$ and Lemma~\ref{lem:ppty-bar}(iii), implies that $x^{t+1}\in\rmint\cK$. Hence, the induction is completed, and we have $\{x^t\}_{t\in\bT}\subset\rmint\cK$. 

In addition, observe from Algorithm~\ref{alg:NCG} that $\{\phi_\mu(x^t)\}_{t\in\bT}$ is descent. By this, $x^0=u^0$, $\{x^t\}_{t\in\bT}\subset\rmint\cK$, and \eqref{b-subpb-set}, one can see that $\{x^t\}_{t\in\bT}\subset\cS$.
\end{proof}

The following lemma provides some properties of the output of Algorithm~\ref{alg:capped-CG}, whose proof is similar to the ones of \cite[Lemma~3]{ROW20} and \cite[Lemma~7]{OW21} and thus omitted here.

\begin{lemma}\label{lem:SOL-NC-ppty}
Suppose that Assumption~\ref{asp:NCG-cmplxity} holds and the direction $d^t$ results from the output $\hat{d}^t$ of Algorithm \ref{alg:capped-CG} with a type specified in d$\_$type  at some iteration $t$ of Algorithm \ref{alg:NCG}. Let $M_t$ be given in \eqref{Mt} and $\gamma_t:=\max\{\|\hat{d}^t\|/\beta,1\}$. Then the following statements hold.
\begin{enumerate}[{\rm (i)}]
\item If d$\_$type=SOL, then  $d^t$ satisfies
\beqa
&\epsilon_H\|d^t\|^2\le (d^t)^T\left(M_t^T\nabla^2 \phi_\mu(x^t)M_t+2\epsilon_HI\right)d^t,\label{SOL-ppty-1}\\
&\|d^t\|\le1.1 \epsilon_H^{-1}\|M_t^T\nabla \phi_\mu(x^t)\|,\label{SOL-ppty-2}\\
&(d^t)^TM_t^T\nabla \phi_\mu(x^t)=-\gamma_t(d^t)^T\left(M_t^T\nabla^2\phi_\mu(x^t)M_t+2\epsilon_HI\right)d^t. \label{SOL-ppty-3}
\eeqa
If $\|\hat{d}^t\|\le\beta$, then $d^t$ also satisfies
\beq
\|(M_t^T\nabla^2 \phi_\mu(x^t)M_t+2\epsilon_HI)d^t+ M_t^T\nabla\phi_\mu(x^t)\|\le \epsilon_H\zeta\|d^t\|/2.\label{SOL-ppty-4}
\eeq
\item If d$\_$type=NC, then  $d^t$ satisfies $(d^t)^TM_t^T\nabla\phi_\mu(x^t)\le0$ and
\begin{equation}\label{NC-ppty}
\frac{(d^t)^TM_t^T\nabla^2\phi_\mu(x^t)M_td^t}{\|d^t\|^2} \le -\|d^t\|\le-\epsilon_H.
\end{equation}
\end{enumerate}
\end{lemma}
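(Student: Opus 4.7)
The approach is to lift the output guarantees of the capped CG routine (Algorithm~\ref{alg:capped-CG}) through the explicit scaling rules \eqref{dk-nc} and \eqref{dk-sol} that form $d^t$ from $\hat{d}^t$. Since Algorithm~\ref{alg:capped-CG} is invoked with $H=M_t^T\nabla^2\phi_\mu(x^t)M_t$ and $g=M_t^T\nabla\phi_\mu(x^t)$, these symbols are used throughout without ambiguity. The six claimed inequalities decouple naturally into the SOL branch and the NC branch, and each one follows from a single ``raw'' property of $\hat{d}^t$ combined with an elementary homogeneity computation. I would therefore first list the output guarantees of capped CG in each mode and then dispatch each inequality in turn.

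For the SOL branch, recall that Algorithm~\ref{alg:capped-CG} in SOL mode returns $\hat{d}^t$ satisfying: (a) the sufficient-curvature bound $(\hat{d}^t)^T H\hat{d}^t \ge -\epsilon_H\|\hat{d}^t\|^2$; (b) the small-residual bound $\|(H + 2\epsilon_H I)\hat{d}^t + g\| \le \hat{\zeta}\|g\|$ for a specific $\hat{\zeta}$ set internally by the algorithm; (c) the length bound $\|\hat{d}^t\| \le 1.1\,\epsilon_H^{-1}\|g\|$; and (d) the orthogonality identity $(\hat{d}^t)^T(H + 2\epsilon_H I)\hat{d}^t = -(\hat{d}^t)^T g$, which holds because CG is initialized at zero, so $\hat{d}^t$ lies in the Krylov subspace that is orthogonal to the terminal residual $(H + 2\epsilon_H I)\hat{d}^t + g$. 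Writing $d^t=\hat{d}^t/\gamma_t$ with $\gamma_t=\max\{\|\hat{d}^t\|/\beta,1\}\ge 1$, inequality \eqref{SOL-ppty-1} follows by dividing (a) by $\gamma_t^2$ and adding $2\epsilon_H\|d^t\|^2$; inequality \eqref{SOL-ppty-2} follows from $\|d^t\|\le\|\hat{d}^t\|$ and (c); and \eqref{SOL-ppty-3} follows from (d) by homogeneity, using $(d^t)^T(H+2\epsilon_H I)d^t=(\hat{d}^t)^T(H+2\epsilon_H I)\hat{d}^t/\gamma_t^2$ and $(d^t)^T g=(\hat{d}^t)^T g/\gamma_t$. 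Finally, when $\|\hat{d}^t\|\le\beta$ we have $\gamma_t=1$ and $d^t=\hat{d}^t$, so \eqref{SOL-ppty-4} follows from (b) together with the calibration of $\hat{\zeta}$ inside Algorithm~\ref{alg:capped-CG} designed to enforce $\hat{\zeta}\|g\|\le \epsilon_H\zeta\|\hat{d}^t\|/2$.

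In the NC branch, the capped CG output satisfies $(\hat{d}^t)^T H\hat{d}^t < -\epsilon_H\|\hat{d}^t\|^2$. The scaling \eqref{dk-nc} takes the form $d^t = -\sgn((\hat{d}^t)^T g)\,\tau_t\hat{d}^t$ with $\tau_t=\min\{|(\hat{d}^t)^T H\hat{d}^t|/\|\hat{d}^t\|^3,\beta/\|\hat{d}^t\|\}>0$, so that $\|d^t\|=\min\{|(\hat{d}^t)^T H\hat{d}^t|/\|\hat{d}^t\|^2,\beta\}$ and the sign factor immediately gives $(d^t)^T g\le 0$. Since scaling preserves the Rayleigh quotient,
\[
\frac{(d^t)^T H d^t}{\|d^t\|^2} \;=\; \frac{(\hat{d}^t)^T H\hat{d}^t}{\|\hat{d}^t\|^2} \;=\; -\frac{|(\hat{d}^t)^T H\hat{d}^t|}{\|\hat{d}^t\|^2} \;\le\; -\|d^t\|,
\]
where the last inequality is just the definition of $\|d^t\|$ as a minimum. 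The additional bound $-\|d^t\|\le -\epsilon_H$ then combines the strict inequality $|(\hat{d}^t)^T H\hat{d}^t|/\|\hat{d}^t\|^2>\epsilon_H$ from NC termination with the input constraint $\beta\ge\epsilon_H$.

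The genuinely delicate pieces are the constant $1.1$ in \eqref{SOL-ppty-2} and the translation of the residual tolerance $\hat{\zeta}$ into \eqref{SOL-ppty-4}. Both are internal to the capped CG analysis and rely on the specific way Algorithm~\ref{alg:capped-CG} maintains its eigenvalue estimate and refines $\hat{\zeta}$ across iterations. These arguments are exactly those of \cite[Lemma~3]{ROW20} and \cite[Lemma~7]{OW21}; because Algorithm~\ref{alg:capped-CG} interacts with the preconditioned data $(H,g)=(M_t^T\nabla^2\phi_\mu(x^t)M_t,\,M_t^T\nabla\phi_\mu(x^t))$ only through matrix-vector products, no new analysis is needed here and the cited proofs transfer verbatim.
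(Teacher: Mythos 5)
Your proof is correct and matches the route the paper intends: the paper itself omits the argument, simply noting it is similar to \cite[Lemma~3]{ROW20} and \cite[Lemma~7]{OW21}, and your write-up is exactly that argument — lifting the raw output guarantees of Algorithm~\ref{alg:capped-CG} (positive curvature along the iterate, small residual, the $1.1\varepsilon^{-1}\|g\|$ length bound, and the Krylov-subspace orthogonality $(\hat d^t)^T r = 0$ giving $(\hat d^t)^T\bar H\hat d^t = -(\hat d^t)^T g$) through the explicit scalings \eqref{dk-sol} and \eqref{dk-nc}, with the scale-invariance of the Rayleigh quotient and the constraint $\beta\ge\epsilon_H$ closing the NC case. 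Deferring the two genuinely delicate constants ($1.1$ and the calibration of $\hat\zeta$ to get $\epsilon_H\zeta\|d^t\|/2$) to the cited references is precisely what the paper does, so no further detail is required.
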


The following lemma shows that when the search direction $d^t$ in Algorithm \ref{alg:NCG} is of type `SOL', the line search step results in a sufficient reduction on $\phi_\mu$. 

\begin{lemma}\label{lem:sol}
Suppose that Assumption~\ref{asp:NCG-cmplxity} holds and the direction $d^t$ results from the output $\hat{d}^t$ of Algorithm~\ref{alg:capped-CG} with d$\_$type=SOL at some iteration $t$ of Algorithm~\ref{alg:NCG}. Then the following statements hold.
\begin{enumerate}[{\rm (i)}]
\item The step length $\alpha_t$ is well-defined, and moreover,
\begin{equation}\label{lwbd-ak-sol}
\alpha_t\ge\min\left\{1,\sqrt{\frac{\min\{6(1-\eta),2\}}{1.1 [L_H^F+\mu(2-\beta)/(1-\beta)^2] (U_g^F+\mu\sqrt{\vartheta})}}\theta\epsilon_H\right\}.
\end{equation}
\item The next iterate $x^{t+1}=x^t+\alpha_tM_td^t$ satisfies
\begin{equation}\label{suff-desc-sol}
\phi_\mu(x^t)-\phi_\mu(x^{t+1})\ge c_{\rm sol}\min\{(\|\nabla \phi_\mu(x^{t+1})\|_{x^{t+1}}^*)^2\epsilon_H^{-1},\epsilon_H^3\},
\end{equation}
where $M_t$ and $c_{\rm sol}$ are given in \eqref{Mt} and \eqref{csol}, respectively.
\end{enumerate}
\end{lemma}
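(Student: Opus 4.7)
The argument adapts the Newton-CG line-search template of \cite{ROW20,HL21bar} to the local-norm geometry induced by $B$.

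\textbf{Plan for (i).}  The scaling in \eqref{dk-sol} enforces $\|d^t\|\le\beta$, hence $\|\alpha M_td^t\|_{x^t}=\alpha\|d^t\|\le\beta$ for every $\alpha\in[0,1]$; Lemma~\ref{lem:ppty-bar}(iii) then puts $x^t+\alpha M_td^t\in\rmint\cK$, and a suitable choice of the neighborhood $\Omega$ of $\cS$ ensures that it also lies in $\Omega$.  The cubic descent inequality \eqref{desc-ineq} therefore applies with $x=x^t$ and $y=x^t+\alpha M_td^t$.  Eliminating $(d^t)^TM_t^T\nabla\phi_\mu(x^t)$ via \eqref{SOL-ppty-3}, using \eqref{SOL-ppty-1} together with $\gamma_t\ge 1$, and rearranging reduces the acceptance inequality \eqref{ls-sol} (after dividing by $\alpha\|d^t\|^2>0$) to a condition of the schematic form
\[
\tfrac{\alpha^2}{6}L_H^\phi\|d^t\|\ <\ \epsilon_H\bigl[1+\alpha(\tfrac12-\eta)\bigr].
\]
The right-hand side stays bounded away from zero on $\alpha\in(0,1]$ for every $\eta\in(0,1)$, so the inequality holds for all sufficiently small $\alpha$; this simultaneously proves $j_t<\infty$ and produces a threshold $\bar\alpha$ proportional to $\sqrt{\epsilon_H/(L_H^\phi\|d^t\|)}$ below which every step is acceptable.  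Hence $\alpha_t\ge\min\{1,\theta\bar\alpha\}$, and feeding in the a priori bound $\|d^t\|\le 1.1\epsilon_H^{-1}(U_g^F+\mu\sqrt\vartheta)$---obtained by combining \eqref{SOL-ppty-2}, Lemma~\ref{lem:NCG-set}, \eqref{Fbd-b-subpb}, and the identity $\|\nabla B(x^t)\|_{x^t}^*=\sqrt\vartheta$ from Lemma~\ref{lem:ppty-bar}(i)---delivers \eqref{lwbd-ak-sol}.  The constant $\min\{6(1-\eta),2\}$ emerges by tracking both summands on the right-hand side of the displayed inequality.

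\textbf{Plan for (ii).}  The acceptance condition \eqref{ls-sol} at $j=j_t$ immediately gives
\[
\phi_\mu(x^t)-\phi_\mu(x^{t+1})\ \ge\ \eta\epsilon_H\alpha_t^2\|d^t\|^2,
\]
and the remaining task is to rewrite the right-hand side in the form \eqref{suff-desc-sol}.  I would split into two regimes.  \emph{Regime A}: $\alpha_t=1$ together with $\|\hat d^t\|\le\beta$, so $d^t=\hat d^t$ and the residual estimate \eqref{SOL-ppty-4} is available.  Applying \eqref{apx-nxt-grad} to $(x^t,x^{t+1})$, multiplying by $M_t^T$, and invoking \eqref{SOL-ppty-4} yields $\|M_t^T\nabla\phi_\mu(x^{t+1})\|\le(2+\zeta/2)\epsilon_H\|d^t\|+\tfrac12 L_H^\phi\|d^t\|^2$.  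Since $\|x^{t+1}-x^t\|_{x^t}\le\beta$, Lemma~\ref{lem:ppty-bar}(iv) transfers this bound to the local norm at $x^{t+1}$; squaring and using $\|d^t\|\le\beta$ controls $(\|\nabla\phi_\mu(x^{t+1})\|_{x^{t+1}}^*)^2/\epsilon_H$ by a constant multiple of $\epsilon_H\|d^t\|^2$, which is absorbed by the available decrease $\eta\epsilon_H\|d^t\|^2$.  \emph{Regime B}: either $\alpha_t<1$, in which case backtracking forces $\alpha_t/\theta>\bar\alpha$ and hence $\alpha_t^2\|d^t\|^2\ge\theta^2\bar\alpha^2\|d^t\|^2$ is at least a constant times $\epsilon_H\|d^t\|/L_H^\phi$, or $\|\hat d^t\|>\beta$, in which case $\|d^t\|=\beta$ is macroscopic.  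Combining these lower bounds with the a priori bounds in part~(i) and the uniform control of $\nabla^2\phi_\mu$ over $\cS$ shows that $\phi_\mu(x^t)-\phi_\mu(x^{t+1})$ dominates both $\epsilon_H^3$ and $(\|\nabla\phi_\mu(x^{t+1})\|_{x^{t+1}}^*)^2/\epsilon_H$.  Tracking constants carefully through both regimes yields the explicit value of $c_{\rm sol}$ in \eqref{csol}.

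\textbf{Main obstacle.}  The delicate step is Regime A: the three residual contributions---the CG-inexactness $\tfrac12\epsilon_H\zeta\|d^t\|$ from \eqref{SOL-ppty-4}, the $2\epsilon_H\|d^t\|$ coming from the $2\epsilon_HI$ shift in the damped Newton system, and the cubic Taylor remainder $\tfrac12 L_H^\phi\|d^t\|^2$---must be combined and then transported from the local norm at $x^t$ to that at $x^{t+1}$ via Lemma~\ref{lem:ppty-bar}(iv), with constants chosen so that the resulting squared quantity is dominated by $\epsilon_H^2\|d^t\|^2$.  Beyond this, matching the exact form of $c_{\rm sol}$ in \eqref{csol} is a careful but routine bookkeeping task.
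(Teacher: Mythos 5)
Your roadmap is the right one, and for part~(ii) it matches the paper's case split (your Regime~A is the paper's case $\alpha_t=1$, $\|\hat d^t\|<\beta$; your Regime~B merges its other two cases), but two steps do not go through as sketched.

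\textbf{Gap in part (i).} You apply \eqref{desc-ineq} to every trial point $y=x^t+\theta^jM_td^t$, justifying membership in $\Omega$ by ``a suitable choice of the neighborhood $\Omega$''. But $\Omega$ is fixed by Assumption~\ref{asp:NCG-cmplxity}(b); you cannot enlarge it. Lemma~\ref{lem:ppty-bar}(iii) only gives $y\in\rmint\cK$, and to place $y$ in $\cS\subset\Omega$ you also need $\phi_\mu(y)\le\phi_\mu(u^0)$, which fails exactly when the trial step \emph{increases} $\phi_\mu$. That regime cannot be dismissed: when a backtracking index $j$ is rejected, the trial point may well have a larger objective value. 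The paper handles it by a separate argument: when $\phi_\mu(x^t+\theta^jM_td^t)>\phi_\mu(x^t)$, there is an interior local minimizer $\alpha^*\in(0,\theta^j)$ of $\varphi(\alpha)=\phi_\mu(x^t+\alpha M_td^t)$ with $\varphi(\alpha^*)<\varphi(0)$, so $x^t+\alpha^*M_td^t\in\cS$; then \eqref{apx-nxt-grad} together with $\varphi'(\alpha^*)=0$, \eqref{SOL-ppty-1}, and \eqref{SOL-ppty-3} yields $(\alpha^*)^2\ge 2\epsilon_H(L_H^\phi)^{-1}\|d^t\|^{-1}$, giving the same $\theta^{2j}$ lower bound as in the ``decrease'' case. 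Your proposal has no substitute for this, so well-definedness of $\alpha_t$ is not actually established.

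\textbf{Gap in part (ii), Regime A.} You claim that from $(1-\beta)\|\nabla\phi_\mu(x^{t+1})\|_{x^{t+1}}^*\le \tfrac12L_H^\phi\|d^t\|^2+\tfrac{4+\zeta}{2}\epsilon_H\|d^t\|$, ``squaring and using $\|d^t\|\le\beta$'' bounds $(\|\nabla\phi_\mu(x^{t+1})\|_{x^{t+1}}^*)^2/\epsilon_H$ by a constant times $\epsilon_H\|d^t\|^2$. Replacing $\|d^t\|^2\le\beta\|d^t\|$ gives a bound of the form $g\lesssim(L_H^\phi\beta+\epsilon_H)\|d^t\|$, so $g^2/\epsilon_H\lesssim (L_H^\phi\beta+\epsilon_H)^2\|d^t\|^2/\epsilon_H$, which is \emph{not} $\lesssim\epsilon_H\|d^t\|^2$ unless $L_H^\phi\beta\lesssim\epsilon_H$ (false in general, since $\beta$ and $L_H^\phi$ are $\epsilon_H$-independent). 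The correct move is the paper's: solve the quadratic inequality in $\|d^t\|$ to obtain the lower bound
\[
\|d^t\|\ \ge\ \frac{4(1-\beta)}{4+\zeta+\sqrt{(4+\zeta)^2+8(1-\beta)L_H^\phi}}\,\min\bigl\{\|\nabla\phi_\mu(x^{t+1})\|_{x^{t+1}}^*\epsilon_H^{-1},\epsilon_H\bigr\},
\]
using the elementary inequality $-a+\sqrt{a^2+bs}\ge(-a+\sqrt{a^2+b})\min\{s,1\}$, and then feed this into $\phi_\mu(x^t)-\phi_\mu(x^{t+1})\ge\eta\epsilon_H\|d^t\|^2$. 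Without this step the $\min\{\cdot,\cdot\}$ structure of \eqref{suff-desc-sol} does not emerge, and the constant $c_{\rm sol}$ in \eqref{csol} cannot be recovered.

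The rest (the bound $\|d^t\|\le 1.1\epsilon_H^{-1}(U_g^F+\mu\sqrt\vartheta)$ via \eqref{SOL-ppty-2}, Lemma~\ref{lem:NCG-set}, \eqref{Fbd-b-subpb}, Lemma~\ref{lem:ppty-bar}(i); the treatment of the cases $\alpha_t=1$ with $\|\hat d^t\|\ge\beta$, and $\alpha_t<1$) is aligned with the paper. The decrease case of part~(i), where \eqref{desc-ineq} \emph{does} apply and rearranging gives $\theta^{2j}\ge 6(1-\eta)\epsilon_H(L_H^\phi)^{-1}\|d^t\|^{-1}$, is also essentially what you outline, though your schematic inequality omits the $\gamma_t\ge1$ factor on which the constant $\min\{6(1-\eta),2\}$ rests.
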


\begin{proof}
Notice from Lemma~\ref{lem:NCG-set} that $x^t\in\cS$, that is, $x^t\in\rmint\cK$ and $\phi_\mu(x^t)\le\phi_\mu(u^0)$. It then follows from \eqref{Mt}, \eqref{Fbd-b-subpb} and Lemma~\ref{lem:ppty-bar}(i) that
\begin{equation}
\|M_t^T\nabla\phi_\mu(x^t)\|=\|\nabla\phi_\mu(x^t)\|^*_{x^t}\le\|\nabla F(x^t)\|^*_{x^t}+\mu\|\nabla B(x^t)\|^*_{x^t}\le  U_g^F+\mu\sqrt{\vartheta}.\label{Ug-bound}   
\end{equation}
Since $d^t$ results from the output of Algorithm~\ref{alg:capped-CG} with d$\_$type=SOL, one can see that $\|M_t^T\nabla\phi_\mu(x^t)\|>\epsilon_g$ and the relations 
\eqref{SOL-ppty-1}-\eqref{SOL-ppty-3} hold. Also, one can observe from Algorithm~\ref{alg:capped-CG} that its output $\hd^t$ satisfies
\[
\|(M_t^T\nabla^2 \phi_\mu(x^t)M_t+2\epsilon_HI)\hd^t+ M_t^T\nabla\phi_\mu(x^t)\|\le \hat\zeta\|M_t^T\nabla\phi_\mu(x^t)\|
\] 
for some $\hat\zeta\in(0,1/6)$, which together with $\|M_t^T\nabla\phi_\mu(x^t)\|>\epsilon_g$ implies that $\hd^t\neq 0$. It then follows from this and \eqref{dk-sol} that $d^t \neq 0$.

We first prove statement (i). If \eqref{ls-sol} holds for $j=0$, then $\alpha_t=1$, which clearly implies that \eqref{lwbd-ak-sol} holds. We now suppose that \eqref{ls-sol} fails for $j=0$. Claim that for all $j\ge0$ that violate \eqref{ls-sol}, it holds that
\begin{equation}\label{lwbd-sol-ak}
\theta^{2j}\ge\min\{6(1-\eta),2\}\epsilon_H(L^\phi_H)^{-1}\|d^t\|^{-1},
\end{equation}
where $L_H^\phi$ is defined in \eqref{bar-Hes-Lip}. Indeed, we suppose that \eqref{ls-sol} is violated by some $j\ge 0$. We next show that \eqref{lwbd-sol-ak} holds for such $j$ by considering two separate cases below.
	
Case 1) $\phi_\mu(x^t+\theta^j M_td^t)>\phi_\mu(x^t)$. Let $\varphi(\alpha)=\phi_\mu(x^t+\alpha M_t d^t)$. Then $\varphi(\theta^j)>\varphi(0)$. In addition, by \eqref{SOL-ppty-1}, \eqref{SOL-ppty-3}, $\gamma_t=\max\{\|\hat{d}^t\|/\beta,1\}\ge 1$, and $d^t\neq0$, one has
\[
\varphi^\prime(0)= (d^t)^TM_t^T\nabla \phi_\mu(x^t) \overset{\eqref{SOL-ppty-3}}{=} -\gamma_t(d^t)^T(M_t^T\nabla^2 \phi_\mu(x^t)M_t+2\epsilon_HI)d^t\overset{\eqref{SOL-ppty-1}}{\le}-\gamma_t\epsilon_H\|d^t\|^2< 0.
\]
In view of these, we can observe that there exists a local minimizer $\alpha^*\in(0,\theta^j)$ of $\varphi$ such that $\varphi(\alpha^*)<\varphi(0)$ and
\begin{equation}\label{eq:gv-as}
\varphi^\prime(\alpha^*)=\nabla \phi_\mu(x^t+\alpha^*M_td^t)^TM_td^t=0.    
\end{equation}
By $\phi_\mu(x^t)\le\phi_\mu(u^0)$ and $\varphi(\alpha^*)<\varphi(0)$, one has $\phi_\mu(x^t+\alpha^*M_td^t)< \phi_\mu(u^0)$. In addition, using \eqref{local-dist-twoiter} and $0<\alpha^*<\theta^j\le 1$, we have $\|\alpha^*M_td^t\|_{x^t}\le \|M_td^t\|_{x^t}\le\beta$. Hence, \eqref{apx-nxt-grad} holds for $x=x^t$ and $y=x^t+\alpha^*M_td^t$. By this, \eqref{Mt-loc-norm}, \eqref{SOL-ppty-1}, \eqref{SOL-ppty-3}, \eqref{eq:gv-as}, $0<\alpha^*< 1$ and $\gamma_t\ge 1$, one has
\[
\begin{array}{rcl}
\frac{(\alpha^*)^2L_H^\phi}{2}\|d^t\|^3&\overset{\eqref{Mt-loc-norm}}{=}&\frac{(\alpha^*)^2L_H^\phi}{2}\|d^t\|\|M_td^t\|_{x^t}^2\overset{\eqref{apx-nxt-grad}}{\ge}\|d^t\|\|\nabla \phi_\mu(x^t+\alpha^*M_td^t)-\nabla \phi_\mu(x^t)-\alpha^*\nabla^2 \phi_\mu(x^t)M_td^t\|_{x^t}^*\\[6pt]
&\ge&(d^t)^T(M_t^T\nabla \phi_\mu(x^t+\alpha^*M_td^t)-M_t^T\nabla \phi_\mu(x^t)-\alpha^*M_t^T\nabla^2 \phi_\mu(x^t)M_td^t)\\[6pt]
&\overset{\eqref{eq:gv-as}}{=}&-(d^t)^T M_t^T\nabla \phi_\mu(x^t)-\alpha^*(d^t)^T M_t^T\nabla^2 \phi_\mu(x^t)M_t d^t\\[4pt]
&\overset{\eqref{SOL-ppty-3}}{=}&(\gamma_t-\alpha^*)(d^t)^T(M_t^T\nabla^2 \phi_\mu(x^t)M_t+2\epsilon_HI)d^t +2 \alpha^*\epsilon_H\|d^t\|^2\\[4pt]
&\overset{\eqref{SOL-ppty-1}}{\ge}&(\gamma_t-\alpha^*)\epsilon_H\|d^t\|^2+2 \alpha^*\epsilon_H\|d^t\|^2 =  (\gamma_t+\alpha^*)\epsilon_H\|d^t\|^2\ge \epsilon_H \|d^t\|^2,
\end{array}    
\]
which along with $d^t\neq 0$ implies that $(\alpha^*)^2\ge 2\epsilon_H(L_H^\phi)^{-1}\|d^t\|^{-1}$. Using this and $\theta^{j}>\alpha^*$, we conclude that \eqref{lwbd-sol-ak} holds in this case.
	
Case 2) $\phi_\mu(x^t+\theta^jM_td^t)\le \phi_\mu(x^t)$. By this and $\phi_\mu(x^t)\le\phi_\mu(u^0)$, one has $\phi_\mu(x^t+\theta^jM_td^t)\le \phi_\mu(u^0)$. Also, using \eqref{local-dist-twoiter} and $\theta\in(0,1)$, we have $\|\theta^jM_td^t\|_{x^t}\le \|M_td^t\|_{x^t}\le\beta$. Hence, \eqref{desc-ineq} holds for $x=x^t$ and $y=x^t+\theta^jM_td^t$. Using this, \eqref{Mt-loc-norm}, \eqref{SOL-ppty-1}, \eqref{SOL-ppty-3} and the fact that $j$ violates \eqref{ls-sol}, we obtain that
\begin{align}
-\eta\epsilon_H\theta^{2j}\|d^t\|^2&\le \phi_\mu(x^t+\theta^jM_td^t)-\phi_\mu(x^t)\nn \\
&\overset{\eqref{desc-ineq}}{\le} \theta^j \nabla \phi_\mu(x^t)^TM_td^t + \frac{\theta^{2j}}{2}(d^t)^TM_t^T\nabla^2 \phi_\mu(x^t)M_t d^t + \frac{L_H^\phi}{6}\theta^{3j}\|M_td^t\|_{x^t}^3 \nn \\
&\overset{\eqref{Mt-loc-norm}\eqref{SOL-ppty-3}}{=}-\theta^j\gamma_t (d^t)^T(M_t^T\nabla^2 \phi_\mu(x^t)M_t +2\epsilon_H I)d^t + \frac{\theta^{2j}}{2}(d^t)^TM_t^T\nabla^2 \phi_\mu(x^t)M_t d^t + \frac{L_H^\phi}{6}\theta^{3j}\|d^t\|^3 \nn \\
&=-\theta^j\left(\gamma_t-\frac{\theta^j}{2}\right)(d^t)^T(M_t^T\nabla^2 \phi_\mu(x^t)M_t +2\epsilon_H I)d^t - \theta^{2j} \epsilon_H\|d^t\|^2 + \frac{L_H^\phi}{6}\theta^{3j}\|d^t\|^3 \nn \\
&\overset{\eqref{SOL-ppty-1}}{\le} -\theta^j\left(\gamma_t-\frac{\theta^j}{2}\right) \epsilon_H\|d^t\|^2 - \theta^{2j}\epsilon_H\|d^t\|^2 + \frac{L_H^\phi}{6}\theta^{3j}\|d^t\|^3 \le -\theta^j \epsilon_H\gamma_t\|d^t\|^2 + \frac{L_H^\phi}{6}\theta^{3j}\|d^t\|^3, \label{ineq:desc-ls-sol}
\end{align}
where the first inequality is due to the violation of \eqref{ls-sol} by such $j$. Recall that $d^t \neq 0$. Dividing both sides of \eqref{ineq:desc-ls-sol} by $L_H^\phi\theta^j\|d^t\|^3/6$ and using $\eta, \theta\in(0,1)$ and $\gamma_t\ge 1$, we have
\[
\theta^{2j}\ge 6(\gamma_t-\eta\theta^j)\epsilon_H(L_H^\phi)^{-1}\|d^t\|^{-1}\ge6(1-\eta)\epsilon_H(L_H^\phi)^{-1}\|d^t\|^{-1}.
\]
Hence, \eqref{lwbd-sol-ak} also holds in this case.
	
Combining the above two cases, we conclude that  \eqref{lwbd-sol-ak} holds for any $j\ge0$ violating \eqref{ls-sol}. By this and $\theta\in(0,1)$, one can see that all $j\ge0$ that violate \eqref{ls-sol} must be bounded above. It then follows that the step length $\alpha_t$ associated with \eqref{ls-sol} is well-defined. We next prove \eqref{lwbd-ak-sol}. Observe from the definition of $j_t$ in Algorithm~\ref{alg:NCG} that $j=j_t-1$ violates \eqref{ls-sol} and hence \eqref{lwbd-sol-ak} holds for $j=j_t-1$. Then, by \eqref{bar-Hes-Lip}, \eqref{lwbd-sol-ak} with $j=j_t-1$, and $\alpha_t = \theta^{j_t}$, one has
\beqa
\alpha_t &=& \theta^{j_t} \ge\sqrt{\min\{6(1-\eta),2\}\epsilon_H(L^\phi_H)^{-1}} \theta\|d^t\|^{-1/2}\nn \\[4pt]
&=&\sqrt{\min\{6(1-\eta),2\}\epsilon_H[L_H^F+\mu(2-\beta)/(1-\beta)^2]^{-1}} \theta\|d^t\|^{-1/2}, \label{lwbd-sol-ak-1}
\eeqa
which along with \eqref{SOL-ppty-2} and \eqref{Ug-bound} implies that \eqref{lwbd-ak-sol} holds.
	
We next prove statement (ii), particularly,  \eqref{suff-desc-sol} by considering three separate cases below.
	
Case 1) $\alpha_t=1$ and $\|\hat{d}^t\|\ge\beta$. It then follows from \eqref{dk-sol} that $d^t=\beta \hat{d}^t/\|\hat{d}^t\|$. Notice from Algorithm~\ref{alg:NCG} that $\beta\ge\epsilon_H$. Using this and $d^t=\beta \hat{d}^t/\|\hat{d}^t\|$, we see that $\|d^t\|=\beta\ge\epsilon_H$, which together with \eqref{ls-sol} and $\alpha_t=1$ implies that \eqref{suff-desc-sol} holds.
	
Case 2) $\alpha_t=1$ and $\|\hat{d}^t\|<\beta$. Notice from $\alpha_t=1$ that $j=0$ is accepted by \eqref{ls-sol}. Then one can see that $\phi_\mu(x^t+M_td^t)\le \phi_\mu(x^t)\le \phi_\mu(u^0)$. Also, observe from \eqref{local-dist-twoiter} that $\|M_td^t\|_{x^t}\le\beta$. Hence, \eqref{apx-nxt-grad} holds for $x=x^t$ and $y=x^t+M_td^t$.  By these, \eqref{ineq:2-local-norm} and \eqref{SOL-ppty-4}, one has
\begin{align*}
(1-\beta)\|\nabla \phi_\mu(x^{t+1})\|_{x^{t+1}}^*&\overset{\eqref{ineq:2-local-norm}}{\le}\|\nabla \phi_\mu(x^{t+1})\|_{x^t}^*=\|\nabla \phi_\mu(x^{t}+M_td^t)\|_{x^{t}}^*\\
&\le\|\nabla \phi_\mu(x^t+M_td^t)-\nabla \phi_\mu(x^t)-\nabla^2 \phi_\mu(x^t)M_td^t\|_{x^t}^*+\|\nabla \phi_\mu(x^t)+\nabla^2 \phi_\mu(x^t)M_td^t\|_{x^t}^*\\
&=\|\nabla \phi_\mu(x^t+M_td^t)-\nabla \phi_\mu(x^t)-\nabla^2 \phi_\mu(x^t)M_td^t\|_{x^t}^*+\|M_t^T(\nabla \phi_\mu(x^t)+\nabla^2 \phi_\mu(x^t)M_td^t)\|\\
&\le\|\nabla \phi_\mu(x^t+M_td^t)-\nabla \phi_\mu(x^t)-\nabla^2 \phi_\mu(x^t)M_td^t\|_{x^t}^*\\
&\quad+\|(M_t^T\nabla^2 \phi_\mu(x^t)M_t+2\epsilon_HI)d^t+M_t^T\nabla \phi_\mu(x^t)\|+2\epsilon_H\|d^t\|\\
&\overset{\eqref{apx-nxt-grad}\eqref{SOL-ppty-4}}\le L_H^\phi\|M_td^t\|_{x^t}^2/2 + (4+\zeta)\epsilon_H\|d^t\|/2\overset{\eqref{Mt-loc-norm}}= L_H^\phi\|d^t\|^2/2 + (4+\zeta)\epsilon_H\|d^t\|/2,
\end{align*}
where the second inequality is due to the triangle inequality, and the second equality follows from \eqref{M-norm} and {the second relation in \eqref{Mt-loc-norm}}. Solving the above inequality for $\|d^t\|$ and using \eqref{bar-Hes-Lip} and the fact that $\|d^t\|>0$, we obtain that
\[
\begin{array}{rcl}
\|d^t\|&\ge&\frac{-(4+\zeta)\epsilon_H + \sqrt{(4+\zeta)^2\epsilon_H^2+8(1-\beta)L_H^\phi\|\nabla \phi_\mu(x^{t+1})\|_{x^{t+1}}^*}}{2L_H^\phi}\\ [6pt]
&\ge&\frac{-(4+\zeta)\epsilon_H + \sqrt{(4+\zeta)^2\epsilon_H^2+8(1-\beta)L_H^\phi\epsilon_H^2}}{2L_H^\phi}\min\{\|\nabla \phi_\mu(x^{t+1})\|_{x^{t+1}}^*\epsilon_H^{-2},1\}\\[6pt]
&=&\frac{4(1-\beta)}{4+\zeta+\sqrt{(4+\zeta)^2+8(1-\beta)L_H^\phi}}\min\{\|\nabla \phi_\mu(x^{t+1})\|_{x^{t+1}}^*\epsilon_H^{-1},\epsilon_H\}\\[6pt]
&\overset{\eqref{bar-Hes-Lip}}{=}&\frac{4(1-\beta)}{4+\zeta+\sqrt{(4+\zeta)^2+8[(1-\beta)L_H^F + \mu(2-\beta)/(1-\beta)]}}\min\{\|\nabla \phi_\mu(x^{t+1})\|_{x^{t+1}}^*\epsilon_H^{-1},\epsilon_H\},
\end{array}
\]
where the second inequality follows from the inequality $-a+\sqrt{a^2+bs}\ge(-a+\sqrt{a^2+b})\min\{s,1\}$ for all $a,b,s\ge0$, which can be easily verified by performing a rationalization to the terms $-a+\sqrt{a^2+bs}$ and $-a+\sqrt{a^2+b}$, respectively.  In view of this, $\alpha_t=1$, \eqref{ls-sol} and \eqref{csol}, one can see that \eqref{suff-desc-sol} holds.

Case 3) $\alpha_t<1$. By this, one has that $j=0$ violates \eqref{ls-sol} and hence \eqref{lwbd-sol-ak} holds for $j=0$. Letting $j=0$ in \eqref{lwbd-sol-ak}, we obtain that $\|d^t\|\ge \min\{6(1-\eta),2\}\epsilon_H/L_H^\phi$, which along with \eqref{ls-sol}, \eqref{bar-Hes-Lip} and \eqref{lwbd-sol-ak-1} implies that
\[
\phi_\mu(x^t)-\phi_\mu(x^{t+1}) \overset{\eqref{ls-sol}}{\ge}\eta \epsilon_H\theta^{2j_t}\|d^t\|^2 \ge \eta\left[\frac{\min\{6(1-\eta),2\}\theta}{L_H^\phi}\right]^2\epsilon_H^3\overset{\eqref{bar-Hes-Lip}}{=}\eta\left[\frac{\min\{6(1-\eta),2\}\theta}{L_H^F + \mu(2-\beta)/(1-\beta)^2}\right]^2\epsilon_H^3.
\]
By this and \eqref{csol}, one can immediately see that \eqref{suff-desc-sol} also holds in this case.
\end{proof}

The next lemma shows that when the search direction $d^t$ in Algorithm \ref{alg:NCG} is of type `NC', the line search step results in a sufficient reduction on $\phi_\mu$ as well.

\begin{lemma}\label{lem:nc}
Suppose that Assumption \ref{asp:NCG-cmplxity} holds and the direction $d^t$ results from either the output $\hat{d}^t$ of Algorithm~\ref{alg:capped-CG} with d$\_$type=NC or the output $v$ of Algorithm~\ref{pro:meo} at some iteration $t$ of Algorithm~\ref{alg:NCG}. Then the following statements hold.
\begin{enumerate}[{\rm (i)}]
\item The step length $\alpha_t$ is well-defined, and moreover,
\begin{equation}\label{lwbd-ak-nc}
\alpha_t\ge \min\left\{1,\frac{\min\{1,3(1-\eta)\}\theta}{L_H^F+\mu(2-\beta)/(1-\beta)^2}\right\}.  
\end{equation}
\item The next iterate $x^{t+1}=x^t+\alpha_tM_td^t$ satisfies $\phi_\mu(x^t)-\phi_\mu(x^{t+1})\ge c_{\rm nc}\epsilon_H^3$, where $M_t$ and $c_{\rm nc}$ are given in \eqref{Mt} and \eqref{cnc}, respectively.
\end{enumerate}
\end{lemma}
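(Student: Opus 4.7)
The plan is to mirror Lemma~\ref{lem:sol}, substituting the NC line-search rule \eqref{ls-nc} for \eqref{ls-sol}. First, I would unify the two possible sources of $d^t$ by verifying that both the capped-CG NC output from \eqref{dk-nc} and the MEO output from \eqref{dk-2nd-nc} produce a direction satisfying three uniform properties: (a) $(d^t)^TM_t^T\nabla\phi_\mu(x^t)\le 0$; (b) $(d^t)^TM_t^T\nabla^2\phi_\mu(x^t)M_t d^t\le -\|d^t\|^3$; and (c) $\epsilon_H/2\le\|d^t\|\le\beta$. For the capped-CG branch, (a) and the curvature inequality in \eqref{NC-ppty} are exactly Lemma~\ref{lem:SOL-NC-ppty}(ii); (b) follows by multiplying \eqref{NC-ppty} through by $\|d^t\|^2$; and (c) follows from the $\min\{\cdot,\beta/\|\hat d^t\|\}$ truncation in \eqref{dk-nc} together with the guarantee $\hat d^T H\hat d<-\epsilon_H\|\hat d\|^2$ of Algorithm~\ref{alg:capped-CG} and $\beta\ge\epsilon_H$. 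For the MEO branch, (a) follows from the sign choice in \eqref{dk-2nd-nc}; (c) follows from the $\min\{\cdot,\beta\}$ truncation and the MEO guarantee $v^TM_t^T\nabla^2\phi_\mu(x^t)M_t v\le-\epsilon_H/2$ with $\|v\|=1$ (using $\beta\ge\epsilon_H$); and (b) is obtained by splitting on which of $|v^TM_t^T\nabla^2\phi_\mu(x^t)M_tv|$ or $\beta$ achieves the minimum in \eqref{dk-2nd-nc}, where either subcase yields $(d^t)^TM_t^T\nabla^2\phi_\mu(x^t)M_t d^t\le-\|d^t\|^3$.

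For statement (i), I would bound $\theta^j$ from below for every $j\ge 0$ violating \eqref{ls-nc}, splitting on the sign of $\phi_\mu(x^t+\theta^jM_td^t)-\phi_\mu(x^t)$ exactly as in Cases~1--2 of Lemma~\ref{lem:sol}. In the uphill case, properties (a)--(b) combined with $\|d^t\|>0$ guarantee that $\varphi(\alpha):=\phi_\mu(x^t+\alpha M_td^t)$ strictly decreases near $\alpha=0$, so there is a local minimizer $\alpha^*\in(0,\theta^j)$ with $\varphi(\alpha^*)<\varphi(0)\le\phi_\mu(u^0)$ and $\varphi^\prime(\alpha^*)=0$; since $\|\alpha^*M_td^t\|_{x^t}\le\|d^t\|\le\beta$, the estimate \eqref{apx-nxt-grad} applies at $x=x^t,y=x^t+\alpha^*M_td^t$ and, together with (a)--(b), yields
\[
\tfrac12 L_H^\phi(\alpha^*)^2\|d^t\|^3\ge-(d^t)^TM_t^T\nabla\phi_\mu(x^t)-\alpha^*(d^t)^TM_t^T\nabla^2\phi_\mu(x^t)M_td^t\ge\alpha^*\|d^t\|^3,
\]
hence $\theta^j>\alpha^*\ge 2/L_H^\phi$. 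In the downhill case, \eqref{desc-ineq} applies and, combined with the violation of \eqref{ls-nc} and (a)--(b), gives
\[
-\tfrac12\eta\theta^{2j}\|d^t\|^3\le-\tfrac12\theta^{2j}\|d^t\|^3+\tfrac16 L_H^\phi\theta^{3j}\|d^t\|^3,
\]
so $\theta^j\ge 3(1-\eta)/L_H^\phi$. Since $j_t-1$ violates \eqref{ls-nc}, $\alpha_t=\theta^{j_t}\ge\theta\min\{2,3(1-\eta)\}/L_H^\phi$, and substituting $L_H^\phi=L_H^F+\mu(2-\beta)/(1-\beta)^2$ together with $\min\{2,3(1-\eta)\}\ge\min\{1,3(1-\eta)\}$ yields \eqref{lwbd-ak-nc}.

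Finally, statement (ii) follows from \eqref{ls-nc} at $j=j_t$: $\phi_\mu(x^t)-\phi_\mu(x^{t+1})\ge\tfrac12\eta\alpha_t^2\|d^t\|^3$. If $\alpha_t=1$, combining with $\|d^t\|\ge\epsilon_H/2$ gives a reduction of at least $\eta\epsilon_H^3/16$; otherwise the lower bound on $\alpha_t$ from (i) combined with $\|d^t\|\ge\epsilon_H/2$ gives a reduction of at least $(\eta/16)[\theta\min\{1,3(1-\eta)\}/L_H^\phi]^2\epsilon_H^3$. Taking the minimum of these two bounds reproduces $c_{\rm nc}\epsilon_H^3$ from \eqref{cnc}. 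I expect the main technical obstacle to be the unified verification of properties (b) and (c) across the two sources of $d^t$---particularly handling the $\min\{\cdot,\beta\}$ truncation in \eqref{dk-2nd-nc} via case analysis---since the capped-CG case is largely packaged by Lemma~\ref{lem:SOL-NC-ppty}(ii) while the MEO case requires a direct inspection of \eqref{dk-2nd-nc}; once those are in hand, the line-search argument is a close parallel of the one used in Lemma~\ref{lem:sol} with the quadratic curvature term in \eqref{desc-ineq} now providing the dominant descent.
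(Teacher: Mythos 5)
Your proof is correct, and the overall structure closely mirrors the paper's. There is one genuine methodological difference worth noting: in the uphill sub-case of part (i), the paper invokes the \emph{second-order} necessary condition $\varphi''(\alpha^*)\ge 0$ at the intermediate local minimizer $\alpha^*$ and pairs it with the Hessian-Lipschitz estimate \eqref{phimu-Lip}, yielding $\alpha^*\ge 1/L_H^\phi$; you instead invoke the \emph{first-order} condition $\varphi'(\alpha^*)=0$ and pair it with the gradient Taylor estimate \eqref{apx-nxt-grad}, which is the same device the paper uses in the analogous uphill sub-case of Lemma~\ref{lem:sol}. Your route yields the (slightly sharper) bound $\alpha^*\ge 2/L_H^\phi$, but since the downhill sub-case contributes the constant $3(1-\eta)/L_H^\phi$ and the overall bound \eqref{lwbd-ak-nc} takes a minimum, both arguments land on the same conclusion. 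The other minor variation is that you front-load a uniform lower bound $\|d^t\|\ge\epsilon_H/2$ (property (c)) across the two sources of $d^t$, whereas the paper defers the distinction (getting $\epsilon_H$ from capped CG, $\epsilon_H/2$ from the MEO) to part (ii); since $c_{\rm nc}$ in \eqref{cnc} already carries the factor $\eta/16$, your unified treatment reproduces the stated constant exactly. Both variations are legitimate and buy nothing asymptotically, but your first-order argument is a nice observation that the same tool can serve the NC line search as well as the SOL one.
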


\begin{proof}
It follows from Lemma~\ref{lem:NCG-set} that $x^t\in\cS$, that is, $x^t\in\rmint\cK$ and $\phi_\mu(x^t)\le\phi_\mu(u^0)$. By the assumption on $d^t$, one can see from Algorithm \ref{alg:NCG} that $d^t$ is a negative curvature direction given in \eqref{dk-nc} or \eqref{dk-2nd-nc} and thus $d^t\neq 0$. Also, the vector $v$ satisfies $\|v\|=1$ whenever it is returned from Algorithm~\ref{pro:meo}.  By these, Lemma~\ref{lem:SOL-NC-ppty}(ii), \eqref{dk-nc} and \eqref{dk-2nd-nc}, one has
\begin{equation}\label{dk-Hk-dk3}
\nabla \phi_\mu(x^t)^TM_td^t\le 0, \quad (d^t)^TM_t^T\nabla^2 \phi_\mu(x^t)M_td^t \leq -\|d^t\|^3<0.
\end{equation}
	
We first prove statement (i). If \eqref{ls-nc} holds for $j=0$, then $\alpha_t=1$, which clearly implies that \eqref{lwbd-ak-nc} holds. We now suppose that \eqref{ls-nc} fails for $j=0$. Claim that for all $j\ge0$ that violate \eqref{ls-nc}, it holds that
\begin{equation}\label{lwbd-at-nc}
\theta^j\ge \min\{1,3(1-\eta)\}/L_H^\phi,
\end{equation}
where $L_H^\phi$ is defined in \eqref{bar-Hes-Lip}. Indeed, suppose that \eqref{ls-nc} is violated by some $j\ge0$. We now prove that \eqref{lwbd-at-nc} holds for such $j$ by considering two separate cases below.
	
Case 1) $\phi_\mu(x^t+\theta^jM_td^t)>\phi_\mu(x^t)$. Let $\varphi(\alpha)=\phi_\mu(x^t+\alpha M_t d^t)$. Then $\varphi(\theta^j)>\varphi(0)$. Also, by \eqref{dk-Hk-dk3}, one has
\[
\varphi^\prime(0)=\nabla \phi_\mu(x^t)^TM_td^t\le 0,\quad \varphi^{\prime\prime}(0)=(d^t)^TM_t^T\nabla^2 \phi_\mu(x^t)M_td^t<0.
\]
From these, we can observe that there exists a local minimizer $\alpha^*\in(0,\theta^j)$ of $\varphi$ such that $\varphi(\alpha^*)<\varphi(0)$. By the second-order necessary optimality condition of $\varphi$ at $\alpha^*$, one has
\begin{equation}\label{varphi-pp}
\varphi^{\prime\prime}(\alpha^*)=(d^t)^TM_t^T\nabla^2 \phi_\mu(x^t+\alpha^*M_td^t)M_td^t\ge 0.    
\end{equation}
In addition, by $\phi_\mu(x^t)\le\phi_\mu(u^0)$ and $\varphi(\alpha^*)<\varphi(0)$, one has $\phi_\mu(x^t+\alpha^*M_td^t)<\phi_\mu(u^0)$. Using \eqref{local-dist-twoiter} and $0<\alpha^*<\theta^j\le 1$, we see that $\|\alpha^*M_td^t\|_{x^t}\le \|M_t d^t\|_{x^t}\le\beta$. Hence, \eqref{phimu-Lip} holds for $x=x^t$ and $y=x^t+\alpha^*M_td^t$. Using this, \eqref{M-norm}, \eqref{Mt-loc-norm}, \eqref{phimu-Lip},  \eqref{dk-Hk-dk3} and \eqref{varphi-pp}, we obtain that
\begin{align}
L_H^\phi\alpha^*\|d^t\|^3 & \overset{\eqref{Mt-loc-norm}}{=}L_H^\phi\alpha^*\|d^t\|^2\|M_td^t\|_{x^t}\overset{\eqref{phimu-Lip}}{\ge}\|d^t\|^2\|\nabla^2 \phi_\mu(x^t+\alpha^*M_td^t)-\nabla^2 \phi_\mu(x^t)\|_{x^t}^*\nn \\
& \overset{\eqref{M-norm}}{=}\|d^t\|^2\|M_t^T(\nabla^2 \phi_\mu(x^t+\alpha^*M_td^t)-\nabla^2 \phi_\mu(x^t))M_t\| \ge(d^t)^TM_t^T(\nabla^2 \phi_\mu(x^t+\alpha^*M_td^t)-\nabla^2 \phi_\mu(x^t))M_td^t \nn \\ 
&\overset{\eqref{varphi-pp}}{\geq} -(d^t)^TM_t^T \nabla^2\phi_\mu(x^t)M_td^t \overset{\eqref{dk-Hk-dk3}}{\ge}\|d^t\|^3. \nn
\end{align}
It then follows from this and $d^t\neq 0$ that $\alpha^*\ge 1/L_H^\phi$, which along with $\theta^j>\alpha^*$ implies that \eqref{lwbd-at-nc} holds in this case.

Case 2) $\phi_\mu(x^t+\theta^jM_td^t)\le \phi_\mu(x^t)$. By this and $\phi_\mu(x^t)\le \phi_\mu(u^0)$, one has $\phi_\mu(x^t+\theta^jM_td^t)\le \phi_\mu(u^0)$. In addition, it follows from \eqref{local-dist-twoiter} and $\theta\in(0,1)$ that $\|\theta^jM_td^t\|_{x^t}\le\|M_td^t\|_{x^t}\le\beta$. Hence, \eqref{desc-ineq} holds for $x=x^t$ and $y=x^t+\theta^jM_td^t$. By this, \eqref{Mt-loc-norm}, \eqref{dk-Hk-dk3} and the fact that $j$ violates \eqref{ls-nc}, one has
\begin{equation*}
\begin{array}{ll}
-\frac{\eta}{2}\theta^{2j}\|d^t\|^3&\le \ \phi_\mu(x^t+\theta^jM_t d^t) - \phi_\mu(x^t) \overset{\eqref{desc-ineq}}{\le} \theta^j \nabla \phi_\mu(x^t)^TM_td^t + \frac{\theta^{2j}}{2}(d^t)^TM_t^T\nabla^2 \phi_\mu(x^t) M_t d^t + \frac{L_H^\phi}{6}\theta^{3j}\|M_td^t\|_{x^t}^3 \\[6pt] 
&\overset{\eqref{Mt-loc-norm}\eqref{dk-Hk-dk3}}{\le}-\frac{\theta^{2j}}{2}\|d^t\|^3 + \frac{L_H^\phi}{6}\theta^{3j}\|d^t\|^3,
\end{array}
\end{equation*}
where the first inequality is due to the violation of \eqref{ls-nc} by such $j$. 
Using this and $d^t\neq 0$, we see that $\theta^j\ge 3(1-\eta)/L_H^\phi$. Hence, \eqref{lwbd-at-nc} also holds in this case.
	
Combining the above two cases, we conclude that \eqref{lwbd-at-nc} holds for all $j\ge0$ violating \eqref{ls-nc}. By this and $\theta\in(0,1)$, one can see that all $j\ge0$ that violate \eqref{ls-nc} must be bounded above. It then follows that the step length $\alpha_t$ associated with \eqref{ls-nc} is well-defined. We next derive a lower bound for $\alpha_t$. Notice that $j=j_t-1$ violates \eqref{ls-nc} and hence \eqref{lwbd-at-nc} holds for $j=j_t-1$. Then by \eqref{lwbd-at-nc} with $j=j_t-1$ and $\alpha_t = \theta^{j_t}$, one can observe that $\alpha_t=\theta^{j_t} \ge \min\{1,3(1-\eta)\}\theta/L_H^\phi$, which along with \eqref{bar-Hes-Lip} yields \eqref{lwbd-ak-nc} as desired.

We next prove statement (ii) by considering two separate cases below.

Case 1) $d^t$ results from the output $\hat{d}^t$ of Algorithm~\ref{alg:capped-CG} with d$\_$type=NC. By this and \eqref{NC-ppty}, one has $\|d^t\|\ge\epsilon_H$, which along with statement (i) and \eqref{ls-nc} implies that statement (ii) holds.

Case 2) $d^t$ results from the output $v$ of Algorithm~\ref{pro:meo}. Notice from Algorithm~\ref{pro:meo} that $\|v\|=1$ and $v^TM_t^T\nabla^2\phi_\mu(x^t)M_tv\le-\epsilon_H/2$. It then follows from \eqref{dk-2nd-nc} and $\beta\ge\epsilon_H$ that $\|d^t\|\ge\epsilon_H/2$. Using this, \eqref{ls-nc} and statement (i), we see that statement (ii) also holds in this case.
\end{proof}

We are now ready to prove Theorem~\ref{thm:NCG-iter-oper-cmplxity}.

\begin{proof}[{Proof of Theorem~\ref{thm:NCG-iter-oper-cmplxity}}]	
Notice from Lemma~\ref{lem:NCG-set} that all the iterates generated by Algorithm~\ref{alg:NCG} lie in $\cS$. By this, \eqref{M-norm}, \eqref{Mt-loc-norm} and \eqref{Fbd-b-subpb}, one has
\begin{equation}
\|M_t^T\nabla^2\phi_\mu(x^t)M_t\|\overset{\eqref{M-norm}\eqref{Mt-loc-norm}}{=}\|\nabla^2\phi_\mu(x^t)\|^*_{x^t}\le\|\nabla^2 F(x^t)\|^*_{x^t}+\mu\|\nabla^2 B(x^t)\|^*_{x^t}\le  U_H^F+\mu, \label{UH-bound}    
\end{equation}
where the last inequality follows from \eqref{Fbd-b-subpb} and the fact that $\|\nabla^2 B(x^t)\|^*_{x^t}=1$ due to \eqref{M-norm} and \eqref{Mt}.

(i) Suppose for contradiction that the total number of calls of Algorithm~\ref{pro:meo} in Algorithm~\ref{alg:NCG} is more than $T_2$. Observe from Algorithm~\ref{alg:NCG} and Lemma~\ref{lem:nc}(ii) that each of these calls, except the last one, returns a sufficiently negative curvature direction, and each of them results in a reduction on $\phi_\mu$ at least by $c_{\rm nc} \epsilon_H^3$. Using this and the fact that $x^0=u^0$, we obtain that
\[
T_2 c_{\rm nc} \epsilon_H^3 \le \sum_{t\in\bT} [\phi_\mu(x^t)-\phi_\mu(x^{t+1})] \le \phi_\mu(x^0)-\phil = \phih-\phil, 
\]
where $\bT$ is given in Lemma~\ref{lem:NCG-set}. This contradicts with the definition of $T_2$ given in \eqref{T1}.

(ii) Suppose for contradiction that the total number of calls of Algorithm~\ref{alg:capped-CG} in Algorithm~\ref{alg:NCG} is more than $T_1$. Note that if Algorithm~\ref{alg:capped-CG} is called at some iteration $t$ and generates $x^{t+1}$ satisfying $\|\nabla \phi_\mu(x^{t+1})\|_{x^{t+1}}^*\le\epsilon_g$, then Algorithm~\ref{pro:meo} must be called at the next iteration $t+1$. Using this and statement (i), we see that the total number of such iterations $t$ is at most $T_2$. Hence, the total number of iterations $t$ of Algorithm~\ref{alg:NCG} at which Algorithm~\ref{alg:capped-CG} is called and generates  $x^{t+1}$ satisfying $\|\nabla \phi_\mu(x^{t+1})\|_{x^{t+1}}^*>\epsilon_g$ is at least $T_1-T_2+1$. Moreover, for each of such iterations $t$, it follows from Lemmas~\ref{lem:sol}(ii) and \ref{lem:nc}(ii) that $\phi_\mu(x^t)-\phi_\mu(x^{t+1})\ge \min\{c_{\rm sol},c_{\rm nc}\}\min\{\epsilon_g^2\epsilon_H^{-1},\epsilon_H^3\}$. Thus, one has
\[
(T_1-T_2+1) \min\{c_{\rm sol},c_{\rm nc}\}\min\{\epsilon_g^2\epsilon_H^{-1},\epsilon_H^3\} \le \sum_{t\in\bT}[\phi_\mu(x^t)-\phi_\mu(x^{t+1})]\le \phih-\phil, 
\]
where $\bT$ is given in Lemma~\ref{lem:NCG-set}. This contradicts the definitions of $T_1$ and $T_2$ given in \eqref{T1}.
	
(iii) Notice that either Algorithm~\ref{alg:capped-CG} or Algorithm~\ref{pro:meo} is called at each iteration of Algorithm~\ref{alg:NCG}. By this and statements (i) and (ii), one has that the total number of iterations of Algorithm~\ref{alg:NCG} is at most $T_1+T_2$. In addition, the relation \eqref{NCG-iter} follows from \eqref{T1}, \eqref{csol} and \eqref{cnc}. It is also not hard to see that the output $x^t$ of Algorithm~\ref{alg:NCG} satisfies $\|\nabla \phi_\mu(x^t)\|_{x^t}^*\le \epsilon_g$ deterministically and $\lambda_{\min}(M_t^T\nabla^2\phi_\mu(x^t)M_t)\ge-\epsilon_H$ with probability at least $1-\delta$ for some $0 \le t \le T_1+T_2$, where the probability is due to Algorithm~\ref{pro:meo}. 
Hence, statement (iii) holds.

(iv) Recall that each iteration of Algorithm~\ref{alg:NCG} requires Cholesky factorization of $\nabla^2 B$ at one point only. This together with statement (iii) implies that the total number of Cholesky factorizations required by Algorithm~\ref{alg:NCG} is at most $T_1+T_2$. By \eqref{UH-bound} and Theorem~\ref{lem:capped-CG} with $(H,\varepsilon)=(M_t^T\nabla^2\phi_\mu(x^t)M_t,\epsilon_H)$, one can see that the number of products of $H$ and a vector $v$ required by each call of Algorithm~\ref{alg:capped-CG} is at most $\widetilde{\cO}(\min\{n,[(U_H^F+\mu)/\epsilon_H]^{1/2}\})$. In addition, by \eqref{UH-bound}, Theorem~\ref{rand-Lanczos} with $(H,\varepsilon)=(M_t^T\nabla^2\phi_\mu(x^t)M_t,\epsilon_H)$, and the fact that each iteration of the Lanczos method requires only one product of $H$ and a vector $v$, one can observe that the number of products of $H$ and a vector $v$ required by each call of Algorithm~\ref{pro:meo} is also at most $\widetilde{\cO}(\min\{n,[(U_H^F+\mu)/\epsilon_H]^{1/2}\})$. 
Recall from Section \ref{complex-alg1} that the product of $H$ and a vector $v$ requires at most three fundamental operations, which are one Hessian-vector product of $F$, one backward and forward substitutions to a triangular linear system. Hence, each call of Algorithm~\ref{alg:capped-CG} or \ref{pro:meo} requires at most $\widetilde{\cO}(\min\{n,[(U_H^F+\mu)/\epsilon_H]^{1/2}\})$ fundamental operations. By these observations and statement (iii), we conclude that statement (iv) holds.
\end{proof}

\subsection{Proof of the main results in Section~\ref{sec:AL-method}}\label{sec:proof-sec5}

In this subsection we provide a proof of Lemma~\ref{lem:level-set-augmented-lagrangian-func} and Theorems~\ref{thm:output-alg1}, \ref{thm:out-itr-cmplxity-1}, \ref{thm:total-iter-cmplxity} and \ref{thm:total-iter-cmplxity2}. 

Before proceeding,  we recall that $\|c(z_{\epsilon})\|\le \epsilon/2<1$. Using this, \eqref{ineq:lower-bound-penalty} and  {\eqref{per-eq-constr}}, we obtain that
\begin{equation}\label{balp-lbd}
{
f(x) + \mu B(x) + \gamma \|\tilde{c}(x)\|^2 \ge f(x) + \mu B(x) + \frac{\gamma}{2}\|c(x)\|^2 - \gamma\|c(z_\epsilon)\|^2 \ge \fl - \gamma,\quad \forall x\in\rmint\cK, \mu\in(0,\mu_0],
}
\end{equation}
{where the first inequality follows from \eqref{per-eq-constr} and $\|a-b\|^2\ge\|a\|^2/2-\|b\|^2$ for all $a,b\in\bR^n$.} In addition, by \eqref{ubd}, the first relation in \eqref{algstop:1st-order}, {and the fact that $0<\mu_k\le\mu_0$ (see Algorithm~\ref{alg:2nd-order-AL-nonconvex}),} one has
\beq \label{Lxk-uppbnd}
{
\cL_{\mu_k}(x^{k+1},\lambda^k;\rho_k)\le \fh \ \mbox{whenever} \ x^{k+1} \ \mbox{is generated}.}
\eeq 
We next present an auxiliary lemma that will be frequently used later. Its proof is identical to the one of \cite[Lemma~5.4]{HL21al} with $f$ replaced by $f+\mu B$, and thus omitted here.

\begin{lemma}\label{tech-1} 
Suppose that Assumption \ref{asp:lowbd-knownfeas} holds. 
Let $\gamma$, {$\mu_0$}, $\fh$ and $\fl$ be given in Assumption \ref{asp:lowbd-knownfeas}.  Assume that $\rho>2\gamma$, {$\mu\in(0,\mu_0]$}, $\lambda\in\bR^m$ and $x\in\rmint\cK$ satisfy $\cL_{\mu}(x,\lambda;\rho)\le \fh$, where $\cL_\mu$ is defined in \eqref{def:bar-AL-func}. Then the following statements hold.
\begin{enumerate}[{\rm (i)}]
\item $f(x)+\mu B(x)\le \fh+\|\lambda\|^2/(2\rho)$.
\item $\|\tilde{c}(x)\|\le\sqrt{2(\fh-\fl+\gamma)/(\rho-2\gamma)+\|\lambda\|^2/(\rho-2\gamma)^2}+\|\lambda\|/(\rho-2\gamma)$.
\item If $\rho\ge\|\lambda\|^2/(2\tdf)$ for some $\tdf>0$, then $f(x)+\mu B(x)\le \fh+\tdf$.
\item If $\rho\ge 2(\fh-\fl+\gamma)\tdc^{-2}+ 2\|\lambda\|\tdc^{-1}+2\gamma$ for some $\tdc>0$, then $\|\tilde{c}(x)\|\le\tdc$.
\end{enumerate}
\end{lemma}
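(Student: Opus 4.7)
The proof is a purely algebraic augmented-Lagrangian dissection of $\cL_\mu$, with the role of the objective played by the barrier-augmented function $f+\mu B$ rather than $f$. My plan is to handle the four statements in order, using each to unlock the next.

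For (i), I will complete the square in $\tilde c(x)$ inside $\cL_\mu$, writing
\[
\cL_\mu(x,\lambda;\rho) = f(x)+\mu B(x)+\frac{\rho}{2}\Bigl\|\tilde c(x)+\frac{\lambda}{\rho}\Bigr\|^2-\frac{\|\lambda\|^2}{2\rho}.
\]
Since the squared term is nonnegative, the hypothesis $\cL_\mu(x,\lambda;\rho)\le \fh$ immediately yields $f(x)+\mu B(x)\le \fh+\|\lambda\|^2/(2\rho)$.

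For (ii), I will combine the hypothesis $\cL_\mu(x,\lambda;\rho)\le \fh$ with the lower bound \eqref{balp-lbd}, which gives $f(x)+\mu B(x)+\gamma\|\tilde c(x)\|^2\ge \fl-\gamma$. Subtracting these and applying Cauchy--Schwarz to the cross term $\lambda^T\tilde c(x)$ produces the scalar quadratic inequality
\[
\Bigl(\frac{\rho}{2}-\gamma\Bigr)\|\tilde c(x)\|^2-\|\lambda\|\cdot\|\tilde c(x)\|-(\fh-\fl+\gamma)\le 0
\]
in the variable $\|\tilde c(x)\|$. Since $\rho>2\gamma$, the leading coefficient is positive, so $\|\tilde c(x)\|$ is bounded above by the larger root of the quadratic. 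Writing that root explicitly and simplifying delivers exactly the bound in (ii).

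Statements (iii) and (iv) are then immediate. For (iii), plug $\rho\ge\|\lambda\|^2/(2\tdf)$ into (i) to force the correction term to be at most $\tdf$. For (iv), rather than manipulate the square root in (ii), I will observe that $\|\tilde c(x)\|\le\tdc$ follows from the same quadratic inequality once the quadratic is nonnegative at $t=\tdc$; that condition is $(\rho/2-\gamma)\tdc^2-\|\lambda\|\tdc-(\fh-\fl+\gamma)\ge 0$, which rearranges to $\rho\ge 2(\fh-\fl+\gamma)\tdc^{-2}+2\|\lambda\|\tdc^{-1}+2\gamma$, i.e.\ exactly the hypothesis of (iv). I do not anticipate any real obstacle; the only bookkeeping issue is distinguishing $\tilde c(x)=c(x)-c(z_\epsilon)$ from $c(x)$, but this is already absorbed into the definitions of $\cL_\mu$ and into \eqref{balp-lbd}, so no additional estimate on $\|c(z_\epsilon)\|$ is needed at this stage.
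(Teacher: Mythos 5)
Your proof is correct, and it is exactly the standard augmented-Lagrangian argument that the paper defers to (it cites the analogous lemma in \cite{HL21al} with $f$ replaced by $f+\mu B$): complete the square to get (i), combine the hypothesis with the penalty lower bound \eqref{balp-lbd} and Cauchy--Schwarz to obtain the scalar quadratic inequality in $\|\tilde c(x)\|$ for (ii), and then (iii) and (iv) are immediate specializations. Your observation in (iv)---that the hypothesis on $\rho$ is precisely the condition for the quadratic to be nonnegative at $t=\tdc$, so one never needs to manipulate the square-root expression in (ii)---is a clean way to close the argument.
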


The following lemma establishes the local Lipschitz continuity of $c_i$ and $\nabla c_i$ with respect to the local norm.

\begin{lemma}
Under Assumption~\ref{asp:lowbd-knownfeas}, the following inequalities hold:
\begin{eqnarray}
&&|c_i(y)-c_i(x)|\le \frac{U_g^c}{1-\beta}\|y-x\|_x,\quad \forall x,y\in\Omega(\delta_f,\delta_c)\text{ with }\|y-x\|_x\le\beta,\ 1\le i\le m,\label{ci-loc-Lip}\\
&&\|\nabla c_i(y) - \nabla c_i(x)\|_x^*\le\frac{U_H^c}{(1-\beta)^2}\|y-x\|_x,\quad \forall x,y\in\Omega(\delta_f,\delta_c)\text{ with }\|y-x\|_x\le\beta,\ 1\le i\le m,\label{gci-loc-Lip}
\end{eqnarray}
where $\Omega(\delta_f,\delta_c)$ is given in Assumption~\ref{asp:lowbd-knownfeas}, and $U_g^c$, $U_H^c$ are defined in \eqref{g-loc-upbd} and \eqref{H-loc-upbd}, respectively.
\end{lemma}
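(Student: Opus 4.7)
My plan is to prove both inequalities by integrating along the line segment from $x$ to $y$ and converting the estimates to the local norm at $x$ using the two-sided bound in Lemma~\ref{lem:ppty-bar}(iv). Since $\Omega(\delta_f,\delta_c)$ is a convex neighborhood of $\cS(\delta_f,\delta_c)$ by Assumption~\ref{asp:lowbd-knownfeas}(c), the entire segment $x_t:=x+t(y-x)$, $t\in[0,1]$, lies in $\Omega(\delta_f,\delta_c)$, and moreover $\|x_t-x\|_x = t\|y-x\|_x\le\beta$, so Lemma~\ref{lem:ppty-bar}(iv) applies at every point on the segment.

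For the first inequality, I would write
\[
c_i(y)-c_i(x) = \int_0^1 \nabla c_i(x_t)^T(y-x)\,dt,
\]
apply the Cauchy--Schwarz-type bound $|\nabla c_i(x_t)^T(y-x)|\le\|\nabla c_i(x_t)\|^*_{x_t}\|y-x\|_{x_t}$ (which follows from the definitions of $\|\cdot\|_{x_t}$ and $\|\cdot\|^*_{x_t}$), use the upper bound $\|\nabla c_i(x_t)\|^*_{x_t}\le U_g^c$ from \eqref{g-loc-upbd}, and convert $\|y-x\|_{x_t}\le(1-\beta)^{-1}\|y-x\|_x$ via Lemma~\ref{lem:ppty-bar}(iv). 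Integrating over $t\in[0,1]$ yields exactly $|c_i(y)-c_i(x)|\le U_g^c(1-\beta)^{-1}\|y-x\|_x$.

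For the second inequality, I would similarly write
\[
\nabla c_i(y)-\nabla c_i(x) = \int_0^1 \nabla^2 c_i(x_t)(y-x)\,dt,
\]
take the $\|\cdot\|^*_x$ norm inside the integral by the triangle inequality, and then go through the local norm at $x_t$ in two steps. First, by Lemma~\ref{lem:ppty-bar}(iv), $\|\nabla^2 c_i(x_t)(y-x)\|^*_x\le(1-\beta)^{-1}\|\nabla^2 c_i(x_t)(y-x)\|^*_{x_t}$. Second, by the definition of the operator norm $\|\cdot\|^*_{x_t}$ in \eqref{M-norm}, $\|\nabla^2 c_i(x_t)(y-x)\|^*_{x_t}\le\|\nabla^2 c_i(x_t)\|^*_{x_t}\|y-x\|_{x_t}\le U_H^c(1-\beta)^{-1}\|y-x\|_x$, where I again use \eqref{H-loc-upbd} and Lemma~\ref{lem:ppty-bar}(iv). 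Combining the two factors of $(1-\beta)^{-1}$ gives the stated bound $U_H^c(1-\beta)^{-2}\|y-x\|_x$.

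The only mildly delicate point is justifying the definition-level inequality $\|Mv\|^*_{x_t}\le\|M\|^*_{x_t}\|v\|_{x_t}$, but this is immediate from \eqref{M-norm}. All other steps are routine; no genuine obstacle is expected.
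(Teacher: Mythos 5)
Your proof is correct and follows essentially the same route as the paper: both parameterize the segment $z_t = x + t(y-x)$, observe that $\|z_t - x\|_x \le \beta$ so the bounds \eqref{g-loc-upbd}, \eqref{H-loc-upbd} and Lemma~\ref{lem:ppty-bar}(iv) apply along the whole segment, and then integrate using the Cauchy--Schwarz pairing of $\|\cdot\|_{z_t}$ with $\|\cdot\|_{z_t}^*$ and the operator-norm bound from \eqref{M-norm}. The only differences are notational ($x_t$ vs.\ $z_t$) and the explicitness with which you flag the elementary inequality $\|Mv\|^*_{z_t}\le\|M\|^*_{z_t}\|v\|_{z_t}$, which the paper uses silently.
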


\begin{proof}
Fix any $x,y\in\Omega(\delta_f,\delta_c)$ with $\|y-x\|_x\le\beta$ and any $1\le i\le m$. Let $z_t=x+t(y-x)$ for all $t\in[0,1]$. It then follows that $\|z_t-x\|_x \le\beta$ and $z_t\in\Omega(\delta_f,\delta_c)$. By these, \eqref{g-loc-upbd},  \eqref{H-loc-upbd}, \eqref{ineq:2-local-pnorm} and \eqref{ineq:2-local-norm}, one has  
\[
\|\nabla c_i(z_t)\|_{z_t}^*\leq U^c_g,\quad
\|\nabla^2 c_i(z_t)\|_{z_t}^*\leq U^c_H, \quad \|v\|_{z_t} \le (1-\beta)^{-1}\|v\|_x, \quad \|v\|^*_x \le (1-\beta)^{-1}\|v\|^*_{z_t}, \quad \forall v\in\bR^n, t\in[0,1].
\]
By virtue of these and \eqref{M-norm}, we obtain
\begin{align*}
|c_i(y)-c_i(x)|&=\left|\int_{0}^{1}\nabla c_i(z_t)^T(y-x)\mathrm{d}t\right| \le\int_{0}^{1}
\|\nabla c_i(z_t)\|_{z_t}^*\|y-x\|_{z_t}\mathrm{d}t \le \frac{U_g^c}{1-\beta}\|y-x\|_x, \\
\|\nabla c_i(y)-\nabla c_i(x)\|_x^*&=\left\|\int_{0}^1\nabla^2 c_i(z_t)(y-x)\mathrm{d}t\right\|_x^*
\leq \int_{0}^1\|\nabla^2 c_i(z_t)(y-x)\|_x^*\mathrm{d}t 
\le\frac{1}{1-\beta} \int_{0}^1\|\nabla^2 c_i(z_t)(y-x)\|^*_{z_t}\mathrm{d}t \\ 
& \le \frac{1}{1-\beta} \int_{0}^1\|\nabla^2 c_i(z_t)\|^*_{z_t}\|y-x\|_{z_t}\mathrm{d}t \le \frac{U_H^c}{(1-\beta)^2}\|y-x\|_x. 
\end{align*}
Hence, \eqref{ci-loc-Lip} and \eqref{gci-loc-Lip} hold. 
\end{proof}

We are now ready to prove Lemma~\ref{lem:level-set-augmented-lagrangian-func}.
\begin{proof}[{Proof of Lemma~\ref{lem:level-set-augmented-lagrangian-func}}]

The proofs of statements (i) and (ii) follow similar arguments to those used in proving \cite[Lemma~4.1]{HL21al}

(iii) Fix $x,y\in\Omega(\delta_f,\delta_c)$ with $\|y-x\|_x\le\beta$ and $1\le i\le m$.  By this, \eqref{Hes-loc-Lip}, \eqref{H-loc-upbd}, {\eqref{per-eq-constr}}, \eqref{LkH}, \eqref{ci-loc-Lip} and $\|c(z_\epsilon)\|\le 1$, 
one has
\begin{align}
&\|\tilde{c}_i(y)\nabla^2 c_i(y) - \tilde{c}_i(x)\nabla^2 c_i(x)\|_{x}^*=\|\tilde{c}_i(y)(\nabla^2 c_i(y)-\nabla^2 c_i(x))+(\tilde{c}_i(y)- \tilde{c}_i(x))\nabla^2 c_i(x)\|_{x}^*\nn\\
&\le|c_i(y)-c_i(z_\epsilon)|\|\nabla^2 c_i(y)-\nabla^2 c_i(x)\|_x^* + |c_i(y)- c_i(x)| \|\nabla^2 c_i(x)\|_x^*\le (1+U^c) L_H^c\|y-x\|_x + \frac{U^c_gU_H^c}{1-\beta}\|y-x\|_x\nn \\
&= \left[(1+U^c)L_H^c+\frac{U^c_gU_H^c}{1-\beta}\right]\|y-x\|_x. \label{ineq:c-Hes-loc}
\end{align}
In addition, by \eqref{g-loc-upbd}, \eqref{ineq:2-local-norm} and \eqref{gci-loc-Lip}, one has
\begin{align}
&\|\nabla c_i(y)\nabla c_i(y)^T-\nabla c_i(x)\nabla c_i(x)^T\|_{x}^*=\|\nabla c_i(y)(\nabla c_i(y)-\nabla c_i(x))^T+ (\nabla c_i(y)-\nabla c_i(x))\nabla c_i(x)^T\|_{x}^*\nn \\
&\le\|\nabla c_i(y)\|_x^* \|\nabla c_i(y)-\nabla c_i(x)\|_x^* +\|\nabla c_i(x)\|_x^* \|\nabla c_i(y)-\nabla c_i(x)\|_x^*\nn \\
&\le\left(\frac{1}{1-\beta} \|\nabla c_i(y)\|_y^* + \|\nabla c_i(x)\|_x^*\right) \|\nabla c_i(y)-\nabla c_i(x)\|_x^*\le\frac{(2-\beta)U_g^cU_H^c}{(1-\beta)^3}\|y-x\|_x. \label{ineq:c-grad-loc}
\end{align}
In view of \eqref{def:ALfunc} and the fact that $\nabla \tilde{c}=\nabla c$ and $\nabla^2 \tilde{c}_i = \nabla^2 c_i$, $1\le i\le m$, we see that
\begin{equation}\label{eq:hessian-augmented-Lagrangian}
\nabla^2_{xx} \cL(x,\lambda^k;\rho_k) = \nabla^2 f(x) +\sum_{i=1}^m\lambda^k_{i}\nabla^2 c_i(x) + \rho_k \sum_{i=1}^m\left(\nabla c_i(x)\nabla c_i(x)^T +  \tilde{c}_i(x)\nabla^2 c_i(x)\right),
\end{equation}
which implies that 
\begin{align*}
\|\nabla^2_{xx}\cL(y,\lambda^k;\rho_k)-&\nabla^2_{xx}\cL(x,\lambda^k;\rho_k)\|_x^* \le 
\|\nabla^2 f(y)-\nabla^2 f(x)\|^*_x+\sum_{i=1}^m|\lambda^k_{i}| \|\nabla^2 c_i(y)-\nabla^2 c_i(x)\|^*_x \\
&\qquad\qquad+ \rho_k \sum_{i=1}^m\left(\|\nabla c_i(y)\nabla c_i(y)^T-\nabla c_i(x)\nabla c_i(x)^T\|^*_x +  \|\tilde{c}_i(y)\nabla^2 c_i(y)-\tilde{c}_i(x)\nabla^2 c_i(x)\|^*_x\right).
\end{align*}
Statement (iii) then follows from this, \eqref{ineq:c-Hes-loc} and  \eqref{ineq:c-grad-loc}.
	
(iv) Notice from \eqref{def:ALfunc} and $\nabla \tilde{c}=\nabla c$ that $\nabla_{x} \cL(x,\lambda^k;\rho_k) = \nabla f(x) +\nabla c(x)\lambda^k + \rho_k \nabla c(x)\tilde{c}(x)$. Also, one can see from \eqref{nearly-feas-level-set}, {\eqref{per-eq-constr}} and $\|c(z_\epsilon)\|\le 1$ that $\|\tilde{c}(x)\|\le 2 + \delta_c$ for any $x\in\cS(\delta_f,\delta_c)$. 
In addition, by \eqref{M-norm}, one can observe that $\|\nabla c_i(x)\nabla c_i(x)^T\|^*_x =(\|\nabla c_i(x)\|^*_x)^2$ for all $i$. {In view of these} and \eqref{eq:hessian-augmented-Lagrangian}, we have
\begin{align*}
&{\|\nabla^2_{x} \cL(x,\lambda^k;\rho_k)\|^*_x \leq \|\nabla f(x)\|_x^*+\sum_{i=1}^m|\lambda^k_{i}|\|\nabla c_i(x)\|^*_x + \rho_k \sum_{i=1}^m |\tilde{c}_i(x)|\|\nabla c_i(x)\|^*_x,}\\
&\|\nabla^2_{xx} \cL(x,\lambda^k;\rho_k)\|^*_x \leq \|\nabla^2 f(x)\|^*_x +\sum_{i=1}^m|\lambda^k_{i}|\|\nabla^2 c_i(x)\|^*_x + \rho_k \sum_{i=1}^m\left((\|\nabla c_i(x)\|^*_x)^2 + |\tilde{c}_i(x)|\|\nabla^2 c_i(x)\|^*_x\right),
\end{align*}
which, together with  \eqref{g-loc-upbd}, \eqref{H-loc-upbd} and the fact that $\|\tilde{c}(x)\|\le 2 + \delta_c$ for any $x\in\cS(\delta_f,\delta_c)$, implies that
\begin{align*}
&{U_{k,g} = \sup_{x\in\cS(\delta_f,\delta_c)}\|\nabla^2_{x}\cL(x,\lambda^k;\rho_k)\|_{x}^*\leq U_g^f + \|\lambda^k\|_1U_g^c + \rho_k\sqrt{m}(2+\delta_c)U_g^c,}\\	
&U_{k,H} = \sup_{x\in\cS(\delta_f,\delta_c)}\|\nabla^2_{xx}\cL(x,\lambda^k;\rho_k)\|_{x}^* \leq 
U^f_H+\|\lambda^k\|_1 U^c_H+\rho_k(m(U^c_g)^2+\sqrt{m}(2 + \delta_c)U^c_H).
\end{align*}
Hence, statement (iv) holds.
\end{proof}

We now prove Theorem~\ref{thm:output-alg1}.

\begin{proof}[{Proof of Theorem~\ref{thm:output-alg1}}]
Suppose that Algorithm \ref{alg:2nd-order-AL-nonconvex} terminates at some iteration $k$, that is, {$\mu_k\le\epsilon/(2\vartheta^{1/2}+2)$} and $\|c(x^{k+1})\|\le\epsilon$ hold. By {$\mu_k\le\epsilon/(2\vartheta^{1/2}+2)$}, $\tilde{\lambda}^{k+1}=\lambda^k+\rho_k\tilde{c}(x^{k+1})$, $\nabla \tilde{c}=\nabla c$, and the second relation in \eqref{algstop:1st-order}, one has
{
\begin{align}
\|\nabla f(x^{k+1})+\nabla c(x^{k+1})\tilde{\lambda}^{k+1}+\mu_k\nabla B(x^{k+1})\|_{x^{k+1}}^*
&=\|\nabla f(x^{k+1})+\nabla \tilde{c}(x^{k+1})(\lambda^k+\rho_k \tilde{c}(x^{k+1}))+\mu_k\nabla B(x^{k+1})\|_{x^{k+1}}^* \nn \\
&=\|\nabla_x\cL_{\mu_k}(x^{k+1},\lambda^k;\rho_k)\|_{x^{k+1}}^*\le\mu_k. \label{ineq:1st-barAL}
\end{align}
}
This along with {$\mu_k>0$} yields that 
\[
\|(\nabla f(x^{k+1})+\nabla c(x^{k+1})\tilde{\lambda}^{k+1})/\mu_k+\nabla B(x^{k+1})\|_{x^{k+1}}^*\le 1.
\] 
By this, $x^{k+1}\in\rmint\cK$ and Lemma~\ref{lem:ppty-bar}(v), one has {$(\nabla f(x^{k+1})+\nabla c(x^{k+1})\tilde{\lambda}^{k+1})/\mu_k\in\cK^*$}, which implies that \eqref{apx-1st-2} holds for $(x^{k+1},\tilde{\lambda}^{k+1})$. We next prove that \eqref{apx-1st-3} holds for $(x^{k+1},\tilde{\lambda}^{k+1})$ with $\epsilon_1=\epsilon$. Indeed, by \eqref{ineq:1st-barAL}, {$\mu_k\le\epsilon/(2\vartheta^{1/2}+2)$}, $x^{k+1}\in\rmint\cK$ and Lemma~\ref{lem:ppty-bar}(i), one has
\begin{align*}
\|\nabla f(x^{k+1})+\nabla c(x^{k+1})\tilde{\lambda}^{k+1}\|_{x^{k+1}}^*
&\le\|\nabla f(x^{k+1})+\nabla c(x^{k+1})\tilde{\lambda}^{k+1}+\mu_k\nabla B(x^{k+1})\|_{x^{k+1}}^*+\mu_k\|\nabla B(x^{k+1})\|_{x^{k+1}}^* \\
&\le\mu_k + \mu_k\vartheta^{1/2}=\epsilon/2<\epsilon,
\end{align*}
and hence \eqref{apx-1st-3} holds for $(x^{k+1},\tilde{\lambda}^{k+1})$ with $\epsilon_1=\epsilon$. In view of these, $\|c(x^{k+1})\|\le\epsilon$ and $x^{k+1}\in\rmint\cK$, we conclude that $x^{k+1}$ is a deterministic $\epsilon$-FOSP of problem~\eqref{model:equa-cnstr}.
	
In addition, {recall from \eqref{algstop:2nd-order} that }
\[
\lambda_{\min}(M_{k+1}^T\nabla^2_{xx}\cL_\mu(x^{k+1},\lambda^k;\rho_k)M_{k+1})\ge-\sqrt{\mu_k}
\]
holds with probability at least $1-\delta$, which implies that  $\hd^TM_{k+1}^T\nabla^2_{xx}\cL_\mu(x^{k+1},\lambda^k;\rho_k)M_{k+1}\hd \ge -{\sqrt{\mu_k}}\|\hd\|^2$ holds for all $\hd\in\bR^n$ with probability at least $1-\delta$. Substituting $\hd=M_{k+1}^{-1}\nabla^2 B(x^{k+1})^{-1/2}d$ in this inequality and using \eqref{Mt}, $\tilde{\lambda}^{k+1}=\lambda^k+\rho_k\tilde{c}(x^{k+1})$, $\nabla \tilde{c}=\nabla c$ and $\nabla^2 \tilde{c}_i=\nabla^2 c_i$, $1\le i\le m$, we obtain that with probability at least $1-\delta$, it holds that 
{
\begin{align*}
&d^T\nabla^2 B(x^{k+1})^{-1/2}\Big(\nabla^2 f(x^{k+1})+\sum_{i=1}^m\tilde{\lambda}_i^{k+1}\nabla^2 c_i(x^{k+1})+\rho_k\nabla c(x^{k+1})\nabla c(x^{k+1})^T+\mu_k\nabla^2 B(x^{k+1})\Big)\nabla^2 B(x^{k+1})^{-1/2}d \\
 &\ge -\sqrt{\mu_k}\|M_{k+1}^{-1}\nabla^2 B(x^{k+1})^{-1/2}d\|^2 \overset{\eqref{Mt}}=-\sqrt{\mu_k}\|d\|^2,\quad \forall d\in\bR^n,
\end{align*}
}
which together with {$\mu_k\le\epsilon/(2\vartheta^{1/2}+2)\in(0,1)$} and $\vartheta \geq 1$ implies that
{
\begin{align*}
&d^T\nabla^2 B(x^{k+1})^{-1/2}\left(\nabla^2 f(x^{k+1})+\sum_{i=1}^m\tilde{\lambda}_i^{k+1}\nabla^2 c_i(x^{k+1})\right)\nabla^2 B(x^{k+1})^{-1/2}d \ge -(\sqrt{\mu_k}+\mu_k)\|d\|^2\\
&\ge -2\sqrt{\mu_k}\|d\|^2 = -2\sqrt{\epsilon/(2\vartheta^{1/2}+2)}\|d\|^2\ge -\sqrt\epsilon\|d\|^2, \quad \forall d\in \cC(x^{k+1}),
\end{align*}
}
where $\cC(\cdot)$ is defined in \eqref{c-cone}. Hence, with probability at least $1-\delta$, the relation \eqref{apx-2nd} holds for $(x^{k+1},\tilde{\lambda}^{k+1})$ with $\epsilon_2=\sqrt{\epsilon}$. In view of this and the fact that $x^{k+1}$ is a deterministic $\epsilon$-FOSP of \eqref{model:equa-cnstr}, we conclude that the output $x^{k+1}$ is an $(\epsilon,\sqrt{\epsilon})$-SOSP of \eqref{model:equa-cnstr} with probability at least $1-\delta$.
\end{proof}


We next provide a proof for Theorem~\ref{thm:out-itr-cmplxity-1}.

\begin{proof}[{Proof of Theorem~\ref{thm:out-itr-cmplxity-1}}]
Notice from \eqref{def:delta0c-rhobar1} that ${\rho}_{\epsilon,1}\ge 2\rho_0$, which along with \eqref{omega-tolerance} and \eqref{T-epsilon-g} implies that
\begin{equation}\label{T-epsilong}
K_{\epsilon} \ \overset{\eqref{T-epsilon-g}}{=} \ {\left\lceil\log\epsilon/\log \omega \right\rceil} \ \overset{\eqref{omega-tolerance}}{=} \ \left\lceil\log 2/\log{r}\right\rceil  \ 
\le \ \log (\rho_{\epsilon,1}\rho_0^{-1})/\log{r}+1.
\end{equation}
Since $\{\rho_k\}$ is either unchanged or increased by a ratio $r$ as $k$ increases, it follows from \eqref{T-epsilong} that 
\begin{equation}\label{rho:upper-bound-Tepsilong}
\max_{0 \le k \le K_{\epsilon}} \rho_k \le{r}^{K_{\epsilon}}\rho_0\overset{\eqref{T-epsilong}}{\le}{r}^{\frac{\log(\rho_{\epsilon,1}\rho_0^{-1})}{\log{r}}+1}\rho_0={r}{\rho}_{\epsilon,1}.
\end{equation}
In addition, observe from Algorithm~\ref{alg:2nd-order-AL-nonconvex} that $\rho_k>2\gamma$ and $\|\lambda^k\|\le \Lambda$. 
By these,  \eqref{Lxk-uppbnd}, and Lemma \ref{tech-1}(ii) with $(x,\lambda,\rho)=(x^{k+1},\lambda^k,\rho_k)$, we obtain that
\begin{equation}\label{bd-ck}
\|\tilde{c}(x^{k+1})\|\le\sqrt{\frac{2(\fh-\fl+\gamma)}{\rho_k-2\gamma}+\frac{\|\lambda^k\|^2}{(\rho_k-2\gamma)^2}}+\frac{\|\lambda^k\|}{\rho_k-2\gamma}\le \sqrt{\frac{2(\fh-\fl+\gamma)}{\rho_k-2\gamma}+\frac{\Lambda^2}{(\rho_k-2\gamma)^2}}+\frac{\Lambda}{\rho_k-2\gamma}.
\end{equation}
On the other hand, notice from $\|c(z_\epsilon)\|\le \epsilon/2$ and {\eqref{per-eq-constr}} that
\begin{equation}\label{cbd-tc}
\|c(x^{k+1})\|\le \|\tilde{c}(x^{k+1})\|+\|c(z_\epsilon)\|\le  \|\tilde{c}(x^{k+1})\| + \epsilon/2.
\end{equation}
	
We now prove that $\overline{K}_{\epsilon}$ is finite. Suppose for contradiction that $\overline{K}_{\epsilon}$ is infinite. By this and \eqref{number-outer-iteration}, one has that $\|c(x^{k+1})\|>\epsilon$ for all $k\ge K_{\epsilon}$. This along with \eqref{cbd-tc} implies that $\|\tilde{c}(x^{k+1})\|>\epsilon/2$ for all $k\ge K_\epsilon$. It then follows that $\|\tilde{c}(x^{k+1})\|>\alpha\|\tilde{c}(x^k)\|$ must hold for infinitely many $k$'s, which, together with the update scheme on $\{\rho_k\}$, further implies $\rho_{k+1}={r}\rho_k$ holds for infinitely many $k$'s. Using this and the monotonicity of $\{\rho_k\}$, we see that $\rho_k\to\infty$ as $k\to\infty$. This along with \eqref{bd-ck} yields that $\|\tilde{c}(x^{k+1})\|\to0$ as $k\to\infty$, which leads to a contradiction with the fact that $\|\tilde{c}(x^{k+1})\|>\epsilon/2$ for all $k\ge K_\epsilon$. Hence, $\overline{K}_{\epsilon}$ is finite. In addition, notice from {$\mu_k=\max\{\epsilon,\omega^k\}/(2\vartheta^{1/2}+2)$} and \eqref{T-epsilon-g} that {$\mu_k=\epsilon/(2\vartheta^{1/2}+2)$} for all $k\ge K_{\epsilon}$. Combining this with the termination criterion of Algorithm \ref{alg:2nd-order-AL-nonconvex} and the definition of $\overline{K}_{\epsilon}$, we conclude that Algorithm \ref{alg:2nd-order-AL-nonconvex} with {$\mu_k=\max\{\epsilon,\omega^k\}/(2\vartheta^{1/2}+2)$} must terminate at iteration $\overline{K}_{\epsilon}$.  
	
We next prove \eqref{outer-iteration-cmplxty} and that $\rho_k\le{r}{\rho}_{\epsilon,1}$ holds for $0 \le k \le \overline{K}_{\epsilon}$ by considering two separate cases below.
	
Case 1) $\|c(x^{K_{\epsilon}+1})\|\le\epsilon$. It then follows from \eqref{number-outer-iteration} that $\overline{K}_{\epsilon}=K_{\epsilon}$, and thus \eqref{outer-iteration-cmplxty} holds due to \eqref{T-epsilong}. In addition, by $\overline{K}_{\epsilon}=K_{\epsilon}$ and \eqref{rho:upper-bound-Tepsilong}, one has that $\rho_k\le{r}{\rho}_{\epsilon,1}$ holds for $0 \le k \le \overline{K}_{\epsilon}$ as well.

Case 2) $\|c(x^{K_{\epsilon}+1})\|>\epsilon$. It then follows from \eqref{number-outer-iteration} that $\overline{K}_{\epsilon}>K_{\epsilon}$, and moreover, $\|c(x^{k+1})\|>\epsilon$ for all $K_{\epsilon} \le k \le \overline{K}_{\epsilon}-1$. This along with \eqref{cbd-tc} implies that
\begin{equation}\label{tc-lbd}
\|\tilde{c}(x^{k+1})\|>\epsilon/2,\quad \forall K_\epsilon\le k\le \overline{K}_\epsilon-1.
\end{equation}
By this, $\|\lambda^k\|\le\Lambda$, \eqref{def:delta0c-rhobar1}, \eqref{Lxk-uppbnd},
 and Lemma \ref{tech-1}(iv) with $(x,\lambda,\rho,\tdc)=(x^{k+1},\lambda^k,\rho_k,\epsilon/2)$, one has
\begin{align}
\rho_k& < 8(\fh-\fl+\gamma)\epsilon^{-2}+4\|\lambda^k\|\epsilon^{-1}+2\gamma \nn\\
&\le 8(\fh-\fl+\gamma)\epsilon^{-2}+4\Lambda\epsilon^{-1}+2\gamma
\overset{\eqref{def:delta0c-rhobar1}}{\le}{\rho}_{\epsilon,1},\quad \forall K_{\epsilon} \le k \le \overline{K}_{\epsilon}-1.\label{bound-rhok}
\end{align}
In view of this, \eqref{rho:upper-bound-Tepsilong}, and the fact $\rho_{\overline{K}_{\epsilon}}\le {r}\rho_{ \overline{K}_{\epsilon}-1}$, we obtain that $\rho_k\le{r}{\rho}_{\epsilon,1}$ holds for $0 \le k\le \overline{K}_{\epsilon}$. It remains to prove \eqref{outer-iteration-cmplxty}. To this end, let 
\[
\bK =\{k:\rho_{k+1}={r}\rho_{k}, K_{\epsilon}\le k\le \overline{K}_{\epsilon}-2\}.
\]
By \eqref{bound-rhok} and the update scheme of $\rho_k$, one has
${r}^{|\bK|}\rho_{K_{\epsilon}}=\max_{K_{\epsilon}\le k \le \overline{K}_{\epsilon}-1}
\rho_k\le {\rho}_{\epsilon,1}$, 
which along with $\rho_{K_{\epsilon}}\ge\rho_0$ implies that
\begin{equation}\label{bound-cU}
|\bK|\le \log({\rho}_{\epsilon,1}\rho_{K_{\epsilon}}^{-1})/ \log{r} \le \log({\rho}_{\epsilon,1}\rho_0^{-1})/ \log{r}.
\end{equation}
Let $\{k_1,k_2,\ldots,k_{|\bK|}\}$ denote all the elements of $\mathcal{\bK}$ arranged in ascending order, and let $k_0=K_{\epsilon}$ and $k_{|\bK|+1}=\overline{K}_{\epsilon}-1$. 
We next derive an upper bound for $k_{j+1}-k_{j}$ for $j=0,1,\ldots,|\bK|$. Using the definition of $\bK$, we see that $\rho_{k}=\rho_{k'}$ for $k_j< k,k'\le k_{j+1}$. By this and the update scheme of $\rho_k$, one can see that
\begin{equation}\label{ineq:sequence-cxk}
\|\tilde{c}(x^{k+1})\|\le\alpha\|\tilde{c}(x^{k})\|,\quad \forall k_j<k< k_{j+1}.
\end{equation}
In addition, by \eqref{def:delta0c-rhobar1xxx}, \eqref{bd-ck} and $\rho_k\ge\rho_0$, one has $\|\tilde{c}(x^{k+1})\|\le\delta_{c,1}$ for $0 \le k \le \overline{K}_{\epsilon}$. Using this and \eqref{tc-lbd}, we obtain that
\begin{equation}\label{bound-cxk}
\epsilon/2<\|\tilde{c}(x^{k+1})\|\le\delta_{c,1},\quad \forall K_{\epsilon} \le k \le \overline{K}_{\epsilon}-1.
\end{equation}
Now, we notice that either $k_{j+1}-k_j=1$ or $k_{j+1}-k_j>1$. In the latter case, one can apply \eqref{ineq:sequence-cxk} with $k=k_{j+1}-1,\ldots,k_j+1$ along with \eqref{bound-cxk} to deduce that
\[
\epsilon/2<\|\tilde{c}(x^{k_{j+1}})\|\le\alpha \|\tilde{c}(x^{k_{j+1}-1})\|\le\cdots\le \alpha^{k_{j+1}-k_j-1}\|\tilde{c}(x^{k_{j}+1})\|\le \alpha^{k_{j+1}-k_j-1}\delta_{c,1}, \quad \forall j=0,1,\ldots,|\bK|.
\]
Combining the two cases, we deduce that
\begin{equation}\label{ineq:subsequence-rho-same}
k_{j+1}-k_{j}\le |\log(\epsilon(2\delta_{c,1})^{-1})/\log \alpha|+1,\quad \forall j=0,1,\ldots,|\bK|.
\end{equation}
Summing up these inequalities, and using \eqref{T-epsilong}, \eqref{bound-cU},  $k_0=K_{\epsilon}$ and $k_{|\bK|+1}=\overline{K}_{\epsilon}-1$, we have
\begin{eqnarray}
&\hspace{-.5in}\overline{K}_{\epsilon} = 1+k_{|\bK|+1}=1+k_0+\sum_{j=0}^{|\bK|}(k_{j+1}-k_{j})\overset{\eqref{ineq:subsequence-rho-same}}{\le}1+K_{\epsilon}+(|\bK|+1)	\left(\left|\frac{\log(\epsilon(2\delta_{c,1})^{-1})}{\log \alpha}\right|+1\right) \nonumber \\[6pt]
&\le2+\frac{\log({\rho}_{\epsilon,1}\rho_0^{-1})}{\log{r}}+\left(\frac{\log({\rho}_{\epsilon,1}\rho_0^{-1})}{\log{r}}+1\right)	\left(\left|\frac{\log(\epsilon(2\delta_{c,1})^{-1})}{\log \alpha}\right|+1\right) =1+ \left(\frac{\log({\rho}_{\epsilon,1}\rho_0^{-1})}{\log{r}}+1\right)	\left(\left|\frac{\log(\epsilon(2\delta_{c,1})^{-1})}{\log \alpha}\right|+2\right), \nn
\end{eqnarray}
where the second inequality is due to \eqref{T-epsilong} and \eqref{bound-cU}. Hence, \eqref{outer-iteration-cmplxty} holds as well in this case. 
\end{proof}

We next prove Theorem~\ref{thm:total-iter-cmplxity}. Before proceeding, we recall from Lemma~\ref{lem:level-set-augmented-lagrangian-func} and the discussions in Section \ref{sbsc:NCGBAL} that the  subproblem {$\min_x\cL_{\mu_k}(x,\lambda^k;\rho_k)$} satisfies Assumptions~\ref{asp:NCG-cmplxity}(b) and \ref{asp:NCG-cmplxity}(c) with $(F(\cdot),\cS,\Omega,L_H^F,U^F_g,U^F_H)=(\cL(\cdot,\lambda^k;\rho_k),\cS(\delta_f,\delta_c),\Omega(\delta_f,\delta_c),L_{k,H},U_{k,g},U_{k,H})$. Moreover, in view of the fact that $\|\lambda^k\| \le \Lambda$, one can see from \eqref{LkH} and Lemma~\ref{lem:level-set-augmented-lagrangian-func}(iv) that there exist some constants ${L}_{1}$, ${L}_{2}$, ${U}_{1}$ and ${U}_{2}$, depending only on  $f$, $c$, $B$, $\beta$, $\Lambda$, $m$, $\bdelta_{f}$ and $\bdelta_{c}$,  such that
\begin{equation}\label{ineq:bound-norm-hessian-Liphessian}
L_{k,H}\le {L}_{1}+\rho_k{L}_{2},\quad U_{k,H}\le{U}_{1}+\rho_k{U}_{2}.
\end{equation}

We are now ready to prove Theorem~\ref{thm:total-iter-cmplxity}.

\begin{proof}[{Proof of Theorem~\ref{thm:total-iter-cmplxity}}]
Let $T_k$ and $N_k$ denote the number of iterations and fundamental operations performed by Algorithm~\ref{alg:NCG} at outer iteration $k$ of Algorithm \ref{alg:2nd-order-AL-nonconvex}, respectively. It then follows from Theorem \ref{thm:out-itr-cmplxity-1} that the total number of iterations of Algorithm~\ref{alg:NCG} performed in Algorithm \ref{alg:2nd-order-AL-nonconvex} is $\sum_{k=0}^{\overline{K}_{\epsilon}}T_k$, and moreover, the total number of Cholesky factorizations and other fundamental operations performed by Algorithm~\ref{alg:NCG} in Algorithm~\ref{alg:2nd-order-AL-nonconvex} are $\sum_{k=0}^{\overline{K}_{\epsilon}}T_k$ and $\sum_{k=0}^{\overline{K}_{\epsilon}}N_k$, respectively. In addition, notice from \eqref{def:delta0c-rhobar1} and Theorem \ref{thm:out-itr-cmplxity-1} that ${\rho}_{\epsilon,1}=\cO(\epsilon^{-2})$ and  $\rho_k\le{r}{\rho}_{\epsilon,1}$, which yield $\rho_k=\cO(\epsilon^{-2})$ for all $0 \leq k \leq \overline{K}_{\epsilon}$.
	
(i) Recall from Lemmas~\ref{lem:level-set-augmented-lagrangian-func}(i) and \ref{lem:level-set-augmented-lagrangian-func}(iii) that $L_{k,H}$ is a Lipschitz constant of $\nabla^2_{xx} \cL(x,\lambda^k;\rho_k)$ with respect to the local norm on an open convex neighborhood of {$\{x\in\rmint\cK:\cL_{\mu_k}(x,\lambda^k;\rho_k)\le \cL_{\mu_k}(x^{k}_{\init},\lambda^k;\rho_k)\}$}. In addition, recall from Lemma \ref{lem:level-set-augmented-lagrangian-func}(ii) that {$\inf_{x\in\rmint\cK} \cL_{\mu_k}(x,\lambda^k;\rho_k) \ge\fl-\gamma-\Lambda\bdelta_c$}. By these, 
\eqref{L-xinit},
\eqref{ineq:bound-norm-hessian-Liphessian}, and Theorem \ref{thm:NCG-iter-oper-cmplxity}(iii) with {$(\phih,\phil,L_{H}^F,\epsilon_g,\epsilon_H)=(\cL_{\mu_k}(x^{k}_{\init},\lambda^k;\rho_k),\fl-\gamma-\Lambda\bdelta_c,L_{k,H},\mu_k,\sqrt{\mu_k})$}, one has 
\begin{equation}
{
T_k =\cO((\fh-\fl+\gamma+\Lambda\delta_{c})L_{k,H}^2\mu_k^{-3/2})
\overset{\eqref{ineq:bound-norm-hessian-Liphessian}}{=}\cO(\rho_k^2\mu_k^{-3/2}) = \cO(\epsilon^{-11/2}), \label{eq:alg1-inner-iter-kthAL}}
\end{equation}
where the last equality follows from {$\mu_k\ge\epsilon/(2\vartheta^{1/2}+2)$} and $\rho_k=\cO(\epsilon^{-2})$. On the other hand, if $c$ is assumed to be affine, namely, $c(x)=Ax-b$ for some $A\in\bR^{m\times n}$ and $b\in\bR^m$, then $\nabla c(x)=A^T$ and $\nabla^2 c_i(x)=0$ for $1 \le i \le m$. Using these and \eqref{eq:hessian-augmented-Lagrangian}, we observe that  $L_{k,H}=\cO(1)$. By this and the similar arguments as for \eqref{eq:alg1-inner-iter-kthAL}, one has {$T_k=\cO(\mu_k^{-3/2})=\cO(\epsilon^{-3/2})$}. Combining these with \eqref{eq:alg1-inner-iter-kthAL} and $\overline{K}_{\epsilon}=\cO(|\log\epsilon|^2)$ (see Remark \ref{order-out-itera-penalty}), we conclude that statement (i) holds.
	
(ii) By Lemmas~\ref{lem:level-set-augmented-lagrangian-func}(i) and \ref{lem:level-set-augmented-lagrangian-func}(iv), one has 
\[
{
U_{k,H} \ge \sup\limits_{x\in\rmint\cK}\{\|\nabla^2_{xx} \cL(x,\lambda^k;\rho_k)\|_{x}^*: \cL_{\mu_k}(x,\lambda^k;\rho_k)\le \cL_{\mu_k}(x^{k}_{\init},\lambda^k;\rho_k)\}.
}
\] 
In view of this, {$\cL_{\mu_k}(x^{k}_{\init},\lambda^k;\rho_k)\le \fh$}, \eqref{ineq:bound-norm-hessian-Liphessian}, and Theorem \ref{thm:NCG-iter-oper-cmplxity}(iv) with {$(\phih,\phil,L_{H}^F,U_{H}^F,\epsilon_g,\epsilon_H)=(\cL_{\mu_k}(x^{k}_{\init},\lambda^k;\rho_k),\\\fl-\gamma-\Lambda\bdelta_c,L_{k,H},U_{k,H},\mu_k,\sqrt{\mu_k})$}, we obtain that
{
\begin{equation}\label{eq:alg1-oper-kthAL}
\begin{array}{rcl}
N_k&=&\widetilde{\cO}((\fh-\fl+\gamma+\Lambda\bdelta_c)L_{k,H}^2\mu_k^{-3/2}\min\{n,U_{k,H}^{1/2}\mu_k^{-1/4}\})\\[4pt]
&\overset{\eqref{ineq:bound-norm-hessian-Liphessian}}{=}&\widetilde{\cO}(\rho_k^2\mu_k^{-3/2}\min\{n,\rho_k^{1/2}\mu_k^{-1/4}\})\ =\ \widetilde{\cO}\left(\epsilon^{-11/2}\min\left\{n,\epsilon^{-5/4}\right\}\right),	
\end{array}
\end{equation}
}
where the last equality follows from {$\mu_k\ge\epsilon/(2\vartheta+2)$} and $\rho_k=\cO(\epsilon^{-2})$. On the other hand, if $c$ is assumed to be affine, it follows from the above discussion that $L_{k,H}=\cO(1)$. By this, $U_{k,H}\le U_1+\rho_k U_2$, and the similar arguments as for \eqref{eq:alg1-oper-kthAL}, one has {$N_k=\widetilde{\cO}(\mu_k^{-3/2}\min\{n,\rho_k^{1/2}\mu_k^{-1/4}\})=\widetilde{\cO}\left(\epsilon^{-3/2}\min\left\{n,\epsilon^{-5/4}\right\}\right)$}. Combining these with \eqref{eq:alg1-oper-kthAL} and $\overline{K}_{\epsilon}=\cO(|\log\epsilon|^2)$ (see Remark \ref{order-out-itera-penalty}), we conclude that statement (ii) holds.
\end{proof}

We next establish two technical lemmas that will be used to prove Theorem~\ref{thm:total-iter-cmplxity2}.

\begin{lemma}\label{lem:multiplier-update-without-proj}
Suppose that Assumptions \ref{asp:lowbd-knownfeas} and \ref{asp:LICQ} hold and that $\rho_0$ is sufficiently large such that $\delta_{f,1}\le\delta_f$ and $\delta_{c,1}\le\delta_c$, where $\delta_{f,1}$ and $\delta_{c,1}$ are defined in \eqref{def:delta0c-rhobar1xxx}. Let $\{(x^k,\lambda^k,\rho_k)\}$ be generated by Algorithm~\ref{alg:2nd-order-AL-nonconvex}. Suppose that
\begin{equation}\label{rho-bd2}
\rho_k\ge\max\{\Lambda^2(2\bdelta_f)^{-1},2(\fh-\fl+\gamma)\bdelta_c^{-2}+2\Lambda\bdelta_c^{-1}+2\gamma,2(U^f_g + \sqrt{m}U_g^c\Lambda + \sqrt{\vartheta} +1)(\sigma\epsilon)^{-1}\}
\end{equation}
for some $k\ge0$, where $\gamma$, $\fh$, $\fl$, $\bdelta_f$, $\bdelta_c$, ${U^f_g}$ and $U_g^c$ are given in Assumption \ref{asp:lowbd-knownfeas}, and $\sigma$ is given in \eqref{sigma}. Then it holds that $\|c(x^{k+1})\|\le\epsilon$.
\end{lemma}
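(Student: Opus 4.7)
The plan is to first verify that $x^{k+1}\in\cS(\bdelta_f,\bdelta_c)$ so that GLICQ~\eqref{sigma} can be invoked at $x^{k+1}$, and then to extract a bound on $\|\tilde c(x^{k+1})\|$ from the approximate first-order stationarity condition in~\eqref{algstop:1st-order} by exploiting the full-rank lower bound on $\nabla^2 B(x^{k+1})^{-1/2}\nabla c(x^{k+1})$. The containment will follow by applying Lemma~\ref{tech-1}(iii) with $\tdf=\bdelta_f$ and Lemma~\ref{tech-1}(iv) with $\tdc=\bdelta_c$ to the pair $(x^{k+1},\lambda^k)$, which is legal since $\cL_{\mu_k}(x^{k+1},\lambda^k;\rho_k)\le\fh$ by~\eqref{Lxk-uppbnd}, $\|\lambda^k\|\le\Lambda$, and the first two lower bounds on $\rho_k$ in~\eqref{rho-bd2} are precisely what these two parts of Lemma~\ref{tech-1} require. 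Together with $\|c(x^{k+1})\|\le\|\tilde c(x^{k+1})\|+\|c(z_\epsilon)\|\le\bdelta_c+\epsilon/2\le 1+\bdelta_c$, the level-set characterization~\eqref{nearly-feas-level-set} then places $x^{k+1}$ in $\cS(\bdelta_f,\bdelta_c)$.

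For the main estimate, I would set $\tilde\lambda^{k+1}=\lambda^k+\rho_k\tilde c(x^{k+1})$ so that $\nabla_x\cL_{\mu_k}(x^{k+1},\lambda^k;\rho_k)=\nabla f(x^{k+1})+\mu_k\nabla B(x^{k+1})+\nabla c(x^{k+1})\tilde\lambda^{k+1}$, pre-multiply the residual bounded by the second inequality of~\eqref{algstop:1st-order} by $\nabla^2 B(x^{k+1})^{-1/2}$, and use the triangle inequality together with $\|\nabla f(x^{k+1})\|_{x^{k+1}}^*\le U_g^f$, Lemma~\ref{lem:ppty-bar}(i) for $\|\nabla B(x^{k+1})\|_{x^{k+1}}^*=\sqrt{\vartheta}$, and $\|\nabla^2 B(x^{k+1})^{-1/2}\nabla c(x^{k+1})\lambda^k\|\le\|\lambda^k\|_1 U_g^c\le\sqrt{m}\Lambda U_g^c$ to obtain
\[
\|\nabla^2 B(x^{k+1})^{-1/2}\nabla c(x^{k+1})(\tilde\lambda^{k+1}-\lambda^k)\|\le \mu_k+U_g^f+\mu_k\sqrt{\vartheta}+\sqrt{m}U_g^c\Lambda.
\]
Applying GLICQ~\eqref{sigma} at $x^{k+1}\in\cS(\bdelta_f,\bdelta_c)$ then yields $\sigma\|\tilde\lambda^{k+1}-\lambda^k\|=\sigma\rho_k\|\tilde c(x^{k+1})\|$ as a lower bound on the left-hand side.

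To close the argument, I would use $\mu_k\le\mu_0=1/(2\sqrt{\vartheta}+2)$ to simplify $\mu_k+\mu_k\sqrt{\vartheta}\le 1\le\sqrt{\vartheta}+1$, folding those $\mu_k$ terms into the constant $\sqrt{\vartheta}+1$ appearing in~\eqref{rho-bd2}, and then divide by $\sigma\rho_k$ and invoke the third lower bound in~\eqref{rho-bd2} to conclude $\|\tilde c(x^{k+1})\|\le\epsilon/2$. The target inequality $\|c(x^{k+1})\|\le\epsilon$ follows at once from $\|c(z_\epsilon)\|\le\epsilon/2$ and the triangle inequality via the identity $\tilde c=c-c(z_\epsilon)$ in~\eqref{per-eq-constr}. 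The only delicate point is establishing $x^{k+1}\in\cS(\bdelta_f,\bdelta_c)$ so that GLICQ may legitimately be applied at $x^{k+1}$; once that is secured, the remainder of the proof is a direct unwinding of the three lower bounds on $\rho_k$ imposed in~\eqref{rho-bd2}.
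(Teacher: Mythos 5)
Your proposal is correct and mirrors the paper's proof essentially step for step: you first show $x^{k+1}\in\cS(\bdelta_f,\bdelta_c)$ via Lemma~\ref{tech-1}(iii),(iv) (whose hypotheses you correctly tie to the first two lower bounds in~\eqref{rho-bd2} together with \eqref{Lxk-uppbnd} and $\|\lambda^k\|\le\Lambda$), and then combine the approximate first-order residual bound from~\eqref{algstop:1st-order} with the GLICQ lower bound $\lambda_{\min}(\nabla c^T\nabla^2B^{-1}\nabla c)\ge\sigma^2$ to get $\sigma\rho_k\|\tilde c(x^{k+1})\|\le U_g^f+\sqrt{m}U_g^c\Lambda+\sqrt{\vartheta}+1$, from which the third lower bound in~\eqref{rho-bd2} gives $\|\tilde c(x^{k+1})\|\le\epsilon/2$ and the triangle inequality finishes the argument. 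The only cosmetic difference is that you phrase the GLICQ step as a coercivity lower bound on the quadratic form while the paper explicitly introduces the pseudoinverse factor; these are equivalent.
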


\begin{proof}
Using $\|\lambda^k\|\le\Lambda$ (see step \ref{algstep:proj-multiplier} of Algorithm~\ref{alg:2nd-order-AL-nonconvex}) and \eqref{rho-bd2}, we have
\[
\rho_k\ge\max\{\|\lambda^k\|^2(2\bdelta_f)^{-1},2(\fh-\fl+\gamma)\bdelta_c^{-2}+2\|\lambda^k\|\bdelta_c^{-1}+2\gamma\}.
\]
By this, \eqref{Lxk-uppbnd}, and Lemmas~\ref{tech-1}(iii) and \ref{tech-1}(iv) with {$(x,\lambda,\mu,\rho,\tdf,\tdc)=(x^{k+1},\lambda^k,\mu_k,\rho_k,\bdelta_f,\bdelta_c)$}, one has {$f(x^{k+1})+\mu_k B(x^{k+1})\le \fh+\bdelta_f$} and $\|\tilde{c}(x^{k+1})\|\le\bdelta_c$. Also, notice from $\|c(z_\epsilon)\|\le 1$ and {\eqref{per-eq-constr}} that $\|c(x^{k+1})\|\le 1 + \|\tilde{c}(x^{k+1})\|$. These along with \eqref{nearly-feas-level-set}, $x^{k+1}\in\rmint\cK$, and {$\mu_k\in(0,\mu_0]$} yield that $x^{k+1}\in\cS(\delta_f,\delta_c)$. It then follows from \eqref{g-loc-upbd} that $\|\nabla f(x^{k+1})\|_{x^{k+1}}^*\le{U^f_g}$ and $\|\nabla c_i(x^{k+1})\|_{x^{k+1}}^*\le U^c_g$ for all $1\le i\le m$. By these, {$\mu_k\le1$}, $\|\lambda^k\|\le\Lambda$, {\eqref{per-eq-constr}} and the second relation in \eqref{algstop:1st-order}, one has
{
\begin{align}
&\rho_k\|\nabla^2 B(x^{k+1})^{-1/2}\nabla c(x^{k+1})\tilde{c}(x^{k+1})\|=\rho_k\|\nabla c(x^{k+1})\tilde{c}(x^{k+1})\|_{x^{k+1}}^* \nn \\
&\le\|\nabla f(x^{k+1})+\nabla c(x^{k+1})\lambda^k\|_{x^{k+1}}^* + \mu_k\|\nabla B(x^{k+1})\|_{x^{k+1}}^*+ \|\nabla_x \cL_{\mu_k}(x^{k+1},\lambda^k;\rho_k)\|_{x^{k+1}}^* \nn \\
&\le\|\nabla f(x^{k+1})\|_{x^{k+1}}^*+\sum_{i=1}^m|\lambda_i^k|\|\nabla c_i(x^{k+1})\|_{x^{k+1}}^* + \mu_k\sqrt{\vartheta}+\mu_k\le U^f_g + \sqrt{m}U_g^c\Lambda + \sqrt{\vartheta} +1,
\label{grad-bound}
\end{align}
}
where the first inequality follows from the triangle inequality, and the second inequality follows from $\|\nabla B(x^{k+1})\|_{x^{k+1}}^*=\sqrt{\vartheta}$ and the second relation in \eqref{algstop:1st-order}. In addition, by $x^{k+1}\in\cS(\bdelta_f,\bdelta_c)$ and \eqref{sigma}, one has \\
$\lambda_{\min}(\nabla c(x^{k+1})^T\nabla^2 B(x^{k+1})^{-1}\nabla c(x^{k+1}))\ge\sigma^2$, which along with \eqref{grad-bound} implies that 
\begin{align}
\|\tilde{c}(x^{k+1})\|&\le \|[\nabla c(x^{k+1})^T\nabla^2 B(x^{k+1})^{-1}\nabla c(x^{k+1})]^{-1}\nabla c(x^{k+1})^T\nabla^2 B(x^{k+1})^{-1/2}\| \|\nabla^2 B(x^{k+1})^{-1/2}\nabla c(x^{k+1})\tilde{c}(x^{k+1})\|\nn \\[4pt]
&=\|[\nabla c(x^{k+1})^T\nabla^2 B(x^{k+1})^{-1}\nabla c(x^{k+1})]^{-1}\|^{1/2}
\|\nabla^2 B(x^{k+1})^{-1/2}\nabla c(x^{k+1})\tilde{c}(x^{k+1})\|\nn \\[4pt]
&=\lambda_{\min}(\nabla c(x^{k+1})^T\nabla^2 B(x^{k+1})^{-1}\nabla c(x^{k+1}))^{-1/2}\|\nabla^2 B(x^{k+1})^{-1/2}\nabla c(x^{k+1})\tilde{c}(x^{k+1})\|\nn \\[4pt]
&\le(U^f_g + \sqrt{m}U_g^c\Lambda + \sqrt{\vartheta} +1)/(\sigma\rho_k).
\label{ineq:1st-order-lambda-bound}   
\end{align}
Observe from \eqref{rho-bd2} that $\rho_k\ge2(U^f_g + \sqrt{m}U_g^c\Lambda + \sqrt{\vartheta} +1)(\sigma\epsilon)^{-1}$, which along with \eqref{ineq:1st-order-lambda-bound} implies $\|\tilde{c}(x^{k+1})\|\le\epsilon/2$. Combining this with $\|c(z_\epsilon)\|\le\epsilon/2$ and {\eqref{per-eq-constr}}, we obtain $\|c(x^{k+1})\|\le\epsilon$ as desired.
\end{proof}

The next lemma establishes a stronger upper bound for $\{\rho_k\}$ than the one given in Theorem \ref{thm:out-itr-cmplxity-1}.

\begin{lemma}\label{cor:improved-outer-iteration-cmplxty}
Suppose that Assumptions \ref{asp:lowbd-knownfeas} and \ref{asp:LICQ} hold and that $\rho_0$ is sufficiently large such that $\delta_{f,1}\le\delta_f$ and $\delta_{c,1}\le\delta_c$, where $\delta_{f,1}$ and $\delta_{c,1}$ are defined in \eqref{def:delta0c-rhobar1xxx}. Let $\{\rho_k\}$ be generated by Algorithm \ref{alg:2nd-order-AL-nonconvex} and
\begin{equation}\label{bound-barrho2}
{\rho}_{\epsilon,2}:=\max\{\Lambda^2(2\bdelta_f)^{-1},2(\fh-\fl+\gamma)\bdelta_c^{-2}+2\Lambda\bdelta_c^{-1}+2\gamma,2(U^f_g + \sqrt{m}U_g^c\Lambda + \sqrt{\vartheta} +1)(\sigma\epsilon)^{-1},2\rho_0\},
\end{equation}
where $\gamma$, $\fh$, $\fl$, $\bdelta_f$, $\bdelta_c$, ${U^f_g}$ and $U_g^c$ are given in Assumption \ref{asp:lowbd-knownfeas}, and $\sigma$ is given in \eqref{sigma}. Then $\rho_k\le{r}{\rho}_{\epsilon,2}$ holds for $0 \le k\le \overline{K}_{\epsilon}$, where $\overline{K}_{\epsilon}$ is defined in \eqref{number-outer-iteration}.
\end{lemma}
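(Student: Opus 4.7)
The plan is to mimic the structure of the proof of Theorem~\ref{thm:out-itr-cmplxity-1}, but replace the crude Lemma~\ref{tech-1}(iv) argument (which only forced $\|\tilde c(x^{k+1})\|\le \epsilon/2$ by driving the penalty high enough) with the sharper Lemma~\ref{lem:multiplier-update-without-proj}, which exploits GLICQ to directly conclude $\|c(x^{k+1})\|\le\epsilon$ once $\rho_k\ge \rho_{\epsilon,2}$. The strategy splits the range $0\le k\le \overline K_\epsilon$ into two regimes: $k\le K_\epsilon$, where $\rho_k$ is bounded purely by the doubling/geometric-growth argument already used in \eqref{T-epsilong}--\eqref{rho:upper-bound-Tepsilong}; and $K_\epsilon\le k\le \overline K_\epsilon$, where I will argue by contradiction using the termination criterion in step~\ref{algstep:stop} of Algorithm~\ref{alg:2nd-order-AL-nonconvex}.

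For the first regime, I would note that $\rho_{\epsilon,2}\ge 2\rho_0$ by definition \eqref{bound-barrho2}, so $\log(\rho_{\epsilon,2}\rho_0^{-1})/\log r\ge \log 2/\log r$. Combined with $K_\epsilon=\lceil\log 2/\log r\rceil$ (cf.~\eqref{T-epsilong}) and the fact that $\{\rho_k\}$ increases by at most a factor $r$ per iteration, one obtains
\[
\max_{0\le k\le K_\epsilon}\rho_k\le r^{K_\epsilon}\rho_0\le r^{\log(\rho_{\epsilon,2}\rho_0^{-1})/\log r+1}\rho_0=r\rho_{\epsilon,2},
\]
exactly as in \eqref{rho:upper-bound-Tepsilong}.

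For the second regime, I would suppose for contradiction that some $k^\ast$ with $K_\epsilon\le k^\ast\le \overline K_\epsilon-1$ satisfies $\rho_{k^\ast}\ge \rho_{\epsilon,2}$. Since $\rho_{\epsilon,2}$ dominates all three of the quantities appearing in the hypothesis \eqref{rho-bd2} of Lemma~\ref{lem:multiplier-update-without-proj}, that lemma applies and yields $\|c(x^{k^\ast+1})\|\le\epsilon$. Because $k^\ast\ge K_\epsilon$, we also have $\mu_{k^\ast}=\epsilon/(2\vartheta^{1/2}+2)$ by the very definition of $K_\epsilon$ in \eqref{T-epsilon-g}, so both stopping conditions in step~\ref{algstep:stop} are met and Algorithm~\ref{alg:2nd-order-AL-nonconvex} would terminate at iteration $k^\ast$. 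This contradicts $\overline K_\epsilon=\inf\{k\ge K_\epsilon:\|c(x^{k+1})\|\le\epsilon\}\ge k^\ast+1$. Hence $\rho_k<\rho_{\epsilon,2}$ for all such $k$. Finally, since $\rho_{\overline K_\epsilon}\le r\rho_{\overline K_\epsilon-1}$ and $\rho_{\overline K_\epsilon-1}<\rho_{\epsilon,2}$ (or $\overline K_\epsilon\le K_\epsilon$ and the first regime already covers it), one concludes $\rho_{\overline K_\epsilon}\le r\rho_{\epsilon,2}$ as well.

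The only delicate point—and the one I expect to need the most care—is verifying that the hypothesis $\rho_{k^\ast}\ge\rho_{\epsilon,2}$ indeed triggers Lemma~\ref{lem:multiplier-update-without-proj}, i.e., that $\rho_{\epsilon,2}$ as defined in \eqref{bound-barrho2} is at least the maximum of the three thresholds in \eqref{rho-bd2}; this is immediate from the $\max$ in \eqref{bound-barrho2}. Beyond that, the argument is essentially bookkeeping on the monotone sequence $\{\rho_k\}$ and on the equivalence between the first stopping criterion $\mu_k\le \epsilon/(2\vartheta^{1/2}+2)$ and the condition $k\ge K_\epsilon$.
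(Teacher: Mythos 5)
Your proposal is correct and follows essentially the same route as the paper: bound $\rho_k$ for $k\le K_\epsilon$ by the geometric-growth argument as in \eqref{rho:upper-bound-Tepsilong}, then show $\rho_k<\rho_{\epsilon,2}$ for $K_\epsilon\le k\le\overline K_\epsilon-1$ via Lemma~\ref{lem:multiplier-update-without-proj} together with the definition of $\overline K_\epsilon$, and finish with $\rho_{\overline K_\epsilon}\le r\rho_{\overline K_\epsilon-1}$. The only cosmetic difference is that the paper states this as a two-case split on whether $\overline K_\epsilon=K_\epsilon$ and uses the contrapositive of Lemma~\ref{lem:multiplier-update-without-proj} directly, whereas you phrase it as a proof by contradiction; these are logically identical.
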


\begin{proof}
Observe from \eqref{bound-barrho2} that ${\rho}_{\epsilon,2} \ge 2\rho_0$. Using this and similar arguments as for \eqref{T-epsilong}, we have $K_{\epsilon}\le\log({\rho}_{\epsilon,2}\rho_0^{-1})/\log{r}+1$, where $K_{\epsilon}$ is defined in \eqref{T-epsilon-g}. By this, the update scheme for $\{\rho_k\}$, and similar arguments as for \eqref{rho:upper-bound-Tepsilong}, one has
\begin{equation}\label{bound-rhok-Tepsilon}
\max_{0 \le k \le K_{\epsilon}} \rho_k \le {r}{\rho}_{\epsilon,2}.
\end{equation}
If $\|c(x^{K_{\epsilon}+1})\|\le\epsilon$, it follows from \eqref{number-outer-iteration} that $\overline{K}_{\epsilon}=K_{\epsilon}$, which along with \eqref{bound-rhok-Tepsilon} implies that $\rho_k\le{r}{\rho}_{\epsilon,2}$ holds for $0 \le k\le \overline{K}_{\epsilon}$. On the other hand,  if $\|c(x^{K_{\epsilon}+1})\|>\epsilon$, it follows from \eqref{number-outer-iteration} that $\|c(x^{k+1})\|>\epsilon$ for $K_{\epsilon} \le k \le \overline{K}_{\epsilon}-1$, which together with Lemma~\ref{lem:multiplier-update-without-proj} and \eqref{bound-barrho2} implies that
\[
\rho_k <\max\left\{\frac{\Lambda^2}{2\bdelta_f},\frac{2(\fh-\fl+\gamma)}{\bdelta_c^2}+\frac{2\Lambda}{\bdelta_c}+2\gamma,\frac{2(U^f_g + \sqrt{m}U_g^c\Lambda + \sqrt{\vartheta} +1)}{\sigma\epsilon}\right\}\overset{\eqref{bound-barrho2}}{\le}{\rho}_{\epsilon,2},\quad  \forall K_{\epsilon} \le k \le \overline{K}_{\epsilon}-1.
\]
Using this, \eqref{bound-rhok-Tepsilon}, and $\rho_{\overline{K}_{\epsilon}}\le{r}\rho_{\overline{K}_{\epsilon}-1}$, we also conclude that $\rho_k\le{r}{\rho}_{\epsilon,2}$ holds for $0 \le k\le \overline{K}_{\epsilon}$.
\end{proof}

We now provide a proof for Theorem~\ref{thm:total-iter-cmplxity2}.

\begin{proof}[{Proof of Theorem~\ref{thm:total-iter-cmplxity2}}]
Notice from \eqref{bound-barrho2} and Lemma \ref{cor:improved-outer-iteration-cmplxty} that ${\rho}_{\epsilon,2}=\cO(\epsilon^{-1})$ and $\rho_k \le {r}{\rho}_{\epsilon,2}$, which imply $\rho_k=\cO(\epsilon^{-1})$. The rest of the proof follows from the same arguments as for Theorem \ref{thm:total-iter-cmplxity} with $\rho_k=\cO(\epsilon^{-2})$ replaced by $\rho_k=\cO(\epsilon^{-1})$.
\end{proof}

\section{Concluding remarks}\label{sec:cr}
In this paper we proposed a Newton-CG based barrier-AL method for finding an approximate SOSP of general nonconvex conic optimization problem \eqref{model:equa-cnstr}. We also established the worst-case iteration and operation complexity bounds of the proposed method for finding an approximate SOSP of problem~\eqref{model:equa-cnstr}. In addition, we conducted preliminary numerical experiments to demonstrate the superior solution quality of our method over a well-known first-order method, SpaRSA.



There are several potential directions for future research. Firstly, conducting extensive numerical studies could provide new insights into improving the practical performance of our method. Secondly, it would be interesting to extend our method to solve a more general conic optimization problem, $\min_{x, y}\{\tf(x, y) : \tc(x, y) = 0, \ y \in \cK\}$, which includes the problem $\min_{x}\{f(x) : c(x) = 0, \ d(x) \leq 0\}$ and, more generally, the problem $\min_{x}\{f(x) : c(x) = 0, \ d(x) \in \cK\}$ as special cases. Notice the latter problem can be equivalently solved as $\min_{x, y}\{f(x) : c(x) = 0, \ d(x) - y = 0, \ y \in \cK\}$, which is a specific instance of the problem considered in this paper. Consequently, it can be suitably solved by our proposed method. 
Lastly, extending our approach to finding an approximate SOSP for nonconvex optimization problems with a general convex set constraint, beyond the convex conic constraint, remains an open question.

\vspace{-.4in}
\section*{}

\noindent {\bf Data availability:} The codes for generating the random data and implementing the algorithms
in the numerical section are available from the first author upon request. 

\vspace{-.5in}
\section*{}

{\bf Competing interests:} The third author is an editorial board member of this journal.

\bibliographystyle{abbrv}
\bibliography{references}

\section*{Appendix}

\appendix

\section{A capped conjugate gradient method}\label{appendix:capped-CG}

\begin{algorithm}[h]
\caption{A capped conjugate gradient method}
\label{alg:capped-CG}
{\footnotesize
\begin{algorithmic}
\State \noindent\textit{Input}: symmetric matrix $H\in\bR^{n\times n}$, vector $g\neq0$, damping parameter $\varepsilon\in(0,1)$, desired relative accuracy $\zeta\in(0,1)$.
\State \textit{Optional input:} scalar $U\ge0$ such that $\|H\|\le U$ (set to $0$ if not provided).
\State \textit{Outputs:} $\hat{d}$, d$\_$type.
\State \textit{Secondary outputs:} final values of $U,\kappa,\hat{\zeta},\tau,$ and $T$.
\State Set 
\begin{equation*}
\bar{H}:=H+2\varepsilon I,\quad \kappa:=\frac{U+2\varepsilon}{\varepsilon},\quad\hat{\zeta}:=\frac{\zeta}{3\kappa},\quad\tau:=\frac{\sqrt{\kappa}}{\sqrt{\kappa}+1},\quad T:=\frac{4\kappa^4}{(1-\sqrt{\tau})^2},
\end{equation*}
$y^0\leftarrow 0,r^0\leftarrow g,p^0\leftarrow -g, j\leftarrow 0$.
\If {$(p^0)^T \bar{H}p^0<\varepsilon\|p^0\|^2$}
\State Set $\hat{d}\leftarrow p^0$ and terminate with d$\_$type = NC;
\ElsIf {\ $\|Hp^0\|>U\|p^0\|\ $}
\State Set $U\leftarrow\|Hp^0\|/\|p^0\|$ and update $\kappa,\hat{\zeta},\tau, T$ accordingly;
\EndIf
\While{TRUE}
\State $\alpha_j\leftarrow (r^j)^T r^j/(p^j)^T\bar{H}p^j$; \{Begin Standard CG Operations\}
\State $y^{j+1}\leftarrow y^j+\alpha_jp^j$;
\State $r^{j+1}\leftarrow r^j+\alpha_j\bar{H}p^j$;
\State $\beta_{j+1}\leftarrow\|r^{j+1}\|^2/\|r^j\|^2$;
\State $p^{j+1}\leftarrow-r^{j+1}+\beta_{j+1}p^j$; \{End Standard CG Operations\}
\State $j\leftarrow j+1$;
\If {$\|Hp^j\|>U\|p^j\|$}
\State Set $U\leftarrow\|Hp^j\|/\|p^j\|$ and update $\kappa,\hat{\zeta},\tau,T$ accordingly;
\EndIf 
\If {\ $\|Hy^j\|>U\|y^j\|\ $}
\State Set $U\leftarrow\|Hy^j\|/\|y^j\|$ and update $\kappa,\hat{\zeta},\tau,T$ accordingly;
\EndIf      
\If {\ $\|Hr^j\|>U\|r^j\|\ $}
\State Set $U\leftarrow\|Hr^j\|/\|r^j\|$ and update $\kappa,\hat{\zeta},\tau,T$ accordingly;
\EndIf
\If {$(y^j)^T\bar{H}y^j<\varepsilon\|y^j\|^2$}
\State Set $\hat{d}\leftarrow y^j$ and terminate with d$\_$type = NC;
\ElsIf {\ $\|r^j\|\le\hat{\zeta}\|r^0\|$}
\State Set $\hat{d}\leftarrow y^j$ and terminate with d$\_$type = SOL;
\ElsIf{\ $(p^j)^T\bar{H}p^j<\varepsilon\|p^j\|^2$}
\State Set $\hat{d}\leftarrow p^j$ and terminate with d$\_$type = NC;  
\ElsIf {\ $\|r^j\|>\sqrt{T}\tau^{j/2}\|r^0\| $}
\State Compute $\alpha_j, y^{j+1}$ as in the main loop above;
\State Find $i\in\{0,\ldots,j-1\}$ such that
\[
(y^{j+1}-y^i)^T\bar{H}(y^{j+1}-y^i)<\varepsilon\|y^{j+1}-y^i\|^2;
\]
\State Set $\hat{d}\leftarrow y^{j+1}-y^i$ and terminate with d$\_$type = NC;
\EndIf
\EndWhile
\end{algorithmic}
}
\end{algorithm}

In this part we present the capped CG method proposed in \cite[Algorithm~1]{ROW20} for solving a possibly indefinite linear system~\eqref{indef-sys}. As briefly discussed in Section~\ref{sec:sbpb-solver}, the capped CG method finds either an approximate solution to \eqref{indef-sys} or a sufficiently negative curvature direction of the associated matrix $H$. More details about this method can be found in \cite[Section~3.1]{ROW20}.

The following theorem presents the iteration complexity of Algorithm~\ref{alg:capped-CG}, whose proof can be found in \cite[Theorem~A.1]{HL21al}, and thus omitted here.

\begin{theorem}[{\bf iteration complexity of Algorithm \ref{alg:capped-CG}}]\label{lem:capped-CG}
Consider applying Algorithm~\ref{alg:capped-CG} with the optional input $U=0$ to the linear system~\eqref{indef-sys} with $g\neq 0$, $\varepsilon>0$, and $H$ being an $n\times n$ symmetric matrix. Then the number of iterations of Algorithm~\ref{alg:capped-CG} is $\widetilde{\cO}(\min\{n,\sqrt{\|H\|/\varepsilon}\})$.
\end{theorem}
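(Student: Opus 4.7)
The plan is to establish the bound $\widetilde{\cO}(\min\{n,\sqrt{\|H\|/\varepsilon}\})$ by combining two separate arguments: a trivial finite-termination bound and a Krylov/condition-number bound.

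First, I would handle the $n$ side of the minimum. Since each iteration of Algorithm~\ref{alg:capped-CG} appends a new Krylov vector $p^j$ to the sequence generated from the initial residual $r^0=g$ with respect to $\bar H=H+2\varepsilon I$, the iterates $\{y^j\}$ live in nested Krylov subspaces of dimension at most $n$. In exact arithmetic, either the residual $r^j$ becomes zero (and then the $\|r^j\|\le\hat\zeta\|r^0\|$ test triggers SOL termination), or one of the negative-curvature tests $(p^j)^T\bar H p^j<\varepsilon\|p^j\|^2$ or $(y^j)^T\bar H y^j<\varepsilon\|y^j\|^2$ must fire, by dimension counting. This gives the $n$ bound.

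Second, I would handle the $\sqrt{\|H\|/\varepsilon}$ side. The effective condition number that governs CG convergence on $\bar H$ is $\kappa=(U+2\varepsilon)/\varepsilon$, where $U$ is maintained to satisfy $U\ge\max\{\|Hp^j\|/\|p^j\|,\|Hy^j\|/\|y^j\|,\|Hr^j\|/\|r^j\|\}$ throughout. Since each such update increases $U$ monotonically toward $\|H\|$ (never exceeding it), after finitely many updates $\kappa=\Theta(\|H\|/\varepsilon)$. Consider any run in which none of the negative-curvature tests ever triggers: then along the full Krylov subspace explored by the algorithm, $\bar H$ behaves as a positive-definite operator with smallest eigenvalue at least $\varepsilon$ and largest eigenvalue at most $U+2\varepsilon$. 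Standard CG error analysis then yields $\|r^j\|\le 2\sqrt\kappa\,\bar\tau^{j}\|r^0\|$ with $\bar\tau=(\sqrt\kappa-1)/(\sqrt\kappa+1)$. After $j=\Theta(\sqrt\kappa\log(\kappa/\zeta))=\widetilde{\cO}(\sqrt{\|H\|/\varepsilon})$ iterations, $\|r^j\|\le\hat\zeta\|r^0\|$ and the SOL termination condition activates.

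Third, I would reconcile the two cases by means of the safeguard $\|r^j\|>\sqrt T\,\tau^{j/2}\|r^0\|$. The constant $T=4\kappa^4/(1-\sqrt\tau)^2$ is calibrated so that whenever $\bar H$ is genuinely $\varepsilon$-positive on the explored Krylov subspace, the true CG residual decay $\|r^j\|\le 2\sqrt\kappa\,\bar\tau^j\|r^0\|$ lies well below $\sqrt T\,\tau^{j/2}\|r^0\|$, so the safeguard cannot fire spuriously; but if it does fire, a telescoping argument on the difference $y^{j+1}-y^i$ (using the identity $\|r^k\|^2/\|r^{k-1}\|^2$ and $\bar H$-conjugacy of the search directions) exhibits an index $i<j$ for which $(y^{j+1}-y^i)^T\bar H(y^{j+1}-y^i)<\varepsilon\|y^{j+1}-y^i\|^2$, yielding the required NC direction and terminating within the same iteration count. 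The main obstacle is making this last calibration quantitative: showing that the constant $T$ (and the choice $\tau=\sqrt\kappa/(\sqrt\kappa+1)$) correctly separates the two regimes so that termination in either the SOL or NC branch is guaranteed within $\widetilde{\cO}(\sqrt\kappa)$ iterations, even allowing for the dynamic updates of $U$ that can occur at most a logarithmic number of times before $U$ stabilizes near $\|H\|$. Combining the two bounds via the minimum then yields the stated $\widetilde{\cO}(\min\{n,\sqrt{\|H\|/\varepsilon}\})$ complexity; full details follow the scheme of \cite[Theorem~A.1]{HL21al}.
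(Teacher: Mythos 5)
The paper gives no proof of this theorem; it cites \cite[Theorem~A.1]{HL21al}, which in turn is the analysis of the capped CG method from \cite{ROW20}. Your sketch faithfully reproduces that argument — the $n$ bound from finite Krylov-subspace termination, the $\sqrt{\kappa}$ bound from standard CG linear convergence on the $\varepsilon$-positive-definite regime, and the safeguard $\|r^j\|>\sqrt T\,\tau^{j/2}\|r^0\|$ extracting a negative-curvature direction in the complementary regime, with $U$ updated at most $\widetilde{\cO}(1)$ times before stabilizing near $\|H\|$ — so your proposal matches the cited proof in approach and substance. One small imprecision: the $n$-iteration bound does not need a case split driven by ``dimension counting''; it suffices that in exact arithmetic $r^n=0$, which triggers the SOL check if no earlier check has fired. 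The calibration you flag as the remaining obstacle (verifying $\bar\tau=(\sqrt\kappa-1)/(\sqrt\kappa+1)<\tau^{1/2}$ with $\tau=\sqrt\kappa/(\sqrt\kappa+1)$, and that $T=4\kappa^4/(1-\sqrt\tau)^2$ is large enough) is indeed where the technical work in \cite{ROW20} lies, and your deferral to that reference is exactly what the paper does as well.
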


\section{A randomized Lanczos based minimum eigenvalue oracle} \label{appendix:meo}

\begin{algorithm}[h]
\caption{A randomized Lanczos based minimum eigenvalue oracle}
\label{pro:meo}
{\small
\noindent\textit{Input}: symmetric matrix $H\in\bR^{n\times n}$, tolerance $\varepsilon>0$, and probability parameter $\delta\in(0,1)$.\\
\noindent\textit{Output:} a sufficiently negative curvature direction $v$ satisfying $v^THv\le-\varepsilon/2$ and $\|v\|=1$; or a certificate that $\lambda_{\min}(H)\ge-\varepsilon$ with probability at least  $1-\delta$.\\
Apply the Lanczos method \cite{KW92LR} to estimate $\lambda_{\min}(H)$ starting with a random vector uniformly generated on the unit sphere, and run it for at most
\begin{equation}\label{N-iter}
N(\varepsilon,\delta):=\min\left\{n,1+\left\lceil\frac{\ln(2.75n/\delta^2)}{2}\sqrt{\frac{\|H\|}{\varepsilon}}\right\rceil\right\}
\end{equation}
iterations.
\begin{enumerate}[{\rm (i)}]
\item[(i)]
If it finds a unit vector $v$ such that $v^THv \le -\varepsilon/2$ at some iteration,  it terminates immediately and returns $v$.
\item[(ii)]Otherwise, it certifies that $\lambda_{\min}(H)\ge-\varepsilon$ holds with probability at least $1-\delta$.
\end{enumerate}
}
\end{algorithm}

In this part we present the randomized Lanczos method proposed in \cite[Section~3.2]{ROW20}, which can be used as a minimum eigenvalue oracle for Algorithm~\ref{alg:NCG}. As mentioned in Section~\ref{sec:sbpb-solver}, this oracle either outputs a sufficiently negative curvature direction of $H$ or certifies that $H$ is nearly positive semidefinite with high probability. More details about it can be found in \cite[Section~3.2]{ROW20}.

The following theorem justifies that Algorithm~\ref{pro:meo} is a suitable minimum eigenvalue oracle for Algorithm~\ref{alg:NCG}. Its proof is identical to that of \cite[Lemma~2]{ROW20} and thus omitted.

\begin{theorem}[{\bf iteration complexity of Algorithm~\ref{pro:meo}}]\label{rand-Lanczos}
Consider Algorithm~\ref{pro:meo} with tolerance $\varepsilon>0$, probability parameter $\delta\in(0,1)$, and symmetric matrix $H\in\bR^{n\times n}$ as its input. Then it either finds a sufficiently negative curvature direction $v$ satisfying $v^THv\le-\varepsilon/2$ and $\|v\|=1$ or certifies that $\lambda_{\min}(H)\ge-\varepsilon$ holds with probability at least  $1-\delta$  in at most $N(\varepsilon,\delta)$ iterations, where $N(\varepsilon,\delta)$ is defined in \eqref{N-iter}.
\end{theorem}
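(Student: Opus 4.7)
The plan is to reduce the claim to the classical probabilistic convergence bound for randomized Lanczos due to Kuczy\'nski and Wo\'zniakowski~\cite{KW92LR}. The two possible outcomes are logically exclusive: option (i) exits as soon as some unit vector $v$ with $v^THv\le -\varepsilon/2$ is discovered, and this certificate is checked explicitly, so option (i) is deterministic. Hence the only step requiring a probabilistic analysis is to show that if the procedure runs all $N(\varepsilon,\delta)$ iterations without exiting via (i), then $\lambda_{\min}(H)\ge -\varepsilon$ with probability at least $1-\delta$. I would prove this contrapositively: assuming $\lambda_{\min}(H)<-\varepsilon$, the algorithm must exit via (i) within $N(\varepsilon,\delta)$ steps except on an event of probability at most $\delta$.

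First I would recall the Rayleigh-quotient characterization of the Lanczos minimum-eigenvalue estimate: at iteration $k$, $\xi_k:=\min_{u\in\cK_k,\|u\|=1} u^THu$ is attained by a Ritz vector $v_k\in\cK_k$ with $\|v_k\|=1$ and $v_k^THv_k=\xi_k$. Consequently, if at any iteration $k\le N(\varepsilon,\delta)$ one has $\xi_k\le -\varepsilon/2$, then Algorithm~\ref{pro:meo} can, and therefore does, return a qualifying $v$. So it suffices to control the event $\{\xi_k>-\varepsilon/2\text{ for all }k\le N(\varepsilon,\delta)\}$ when $\lambda_{\min}(H)<-\varepsilon$.

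Next I would invoke the Kuczy\'nski--Wo\'zniakowski bound (applied to $-H$ so the smallest eigenvalue becomes the largest): for a starting vector drawn uniformly on the unit sphere and any $\eta\in(0,1]$,
\[
\Pr\!\left[\xi_k-\lambda_{\min}(H)\le \eta\bigl(\lambda_{\max}(H)-\lambda_{\min}(H)\bigr)\right]\ge 1-1.648\sqrt{n}\,\exp\!\bigl(-(2k-1)\sqrt{\eta}\bigr).
\]
Choosing $\eta=\varepsilon/(4\|H\|)$, one has $\eta(\lambda_{\max}(H)-\lambda_{\min}(H))\le 2\eta\|H\|=\varepsilon/2$, so on the above event $\xi_k\le\lambda_{\min}(H)+\varepsilon/2<-\varepsilon/2$. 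The failure probability is at most $\delta$ provided $(2k-1)\sqrt{\eta}\ge\ln(1.648\sqrt{n}/\delta)=\tfrac12\ln(2.716\,n/\delta^2)$, i.e.
\[
k\ \ge\ \tfrac12+\tfrac12\ln(2.716\,n/\delta^2)\sqrt{\|H\|/\varepsilon},
\]
which is implied by $k=N(\varepsilon,\delta)$ in \eqref{N-iter} (the constant $2.75$ absorbs rounding of $2.716$). The trivial case $\|H\|<\varepsilon/2$ gives $\lambda_{\min}(H)\ge -\|H\|>-\varepsilon$ automatically. Finally, the $\min\{n,\cdot\}$ in \eqref{N-iter} is justified by the fact that after $n$ steps the Krylov subspace exhausts $\bR^n$, yielding $\xi_n=\lambda_{\min}(H)$ exactly, so at most $n$ iterations are ever required.

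The main obstacle is applying the Kuczy\'nski--Wo\'zniakowski estimate in the correct form and matching its logarithmic constant to the one appearing in \eqref{N-iter}; once this is in place the argument reduces to elementary parameter bookkeeping and to the standard Ritz-vector observation that connects the Lanczos estimate to an actual unit vector usable as the output.
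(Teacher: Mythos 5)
Your argument is correct and is essentially the argument the paper relies on: the paper omits the proof, citing that it is identical to \cite[Lemma~2]{ROW20}, and that proof is exactly this Kuczy\'nski--Wo\'zniakowski reduction with the relative tolerance $\varepsilon/(4\|H\|)$, the Ritz-vector observation, and the constant bookkeeping giving $N(\varepsilon,\delta)$ in \eqref{N-iter}. Your parameter checks (including $1.648^2\approx 2.716\le 2.75$, the trivial case $\|H\|<\varepsilon/2$, and the $\min\{n,\cdot\}$ cap) all go through.
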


Notice that generally, computing $\|H\|$ in Algorithm~\ref{pro:meo} may not be cheap when $n$ is large. Nevertheless, $\|H\|$ can be efficiently estimated via a randomization scheme with high confidence (e.g., see the discussion in \cite[Appendix~B3]{ROW20}).

\end{document}